\numberwithin{equation}{section}
\newtheorem{theorem}{Theorem}[section]
\newtheorem*{thm}{Main Theorem}
\newtheorem{lemma}[theorem]{Lemma}
\newtheorem{definition}[theorem]{Definition}
\newtheorem{remark}{Remark}
\newcommand{\N}{\mathbb N}
\newcommand{\R}{\mathbb R}
\newcommand{\la}{\langle}
\newcommand{\ra}{\rangle}
\newcommand{\Ni}{\noindent}
\newcommand{\del}{\partial}
\newcommand{\ZZ}{{\mathbb{Z}}}
\newcommand{\ds}{\, {\rm d} s}
\newcommand{\dt}{\, {\rm d} t}
\newcommand{\dT}{\, {\rm d} T}
\newcommand{\dv}{\, {\rm d} v}
\newcommand{\dw}{\, {\rm d} w}
\newcommand{\dx}{\, {\rm d} x}
\newcommand{\dy}{\, {\rm d} y}
\newcommand{\dz}{\, {\rm d} z}
\newcommand{\CalM}{\mathcal{M}}
\newcommand{\CalA}{\mathcal{A}}
\newcommand{\CalF}{\mathcal{F}}
\newcommand{\CalT}{\mathcal{T}}
\newcommand{\dxi}{\, {\rm d} \xi}
\newcommand{\deta}{\, {\rm d} \eta}
\newcommand{\dtau}{\, {\rm d} \tau}
\newcommand{\dtheta}{\, {\rm d} \theta}
\newcommand{\dzeta}{\, {\rm d} \zeta}
\newcommand{\One}{{\boldsymbol 1}}
\newcommand{\vpran}[1]{\left(#1\right)}
\newcommand{\nn}{\nonumber}
\newcommand{\Eps}{\epsilon}
\newcommand{\norm}[1]{\left\lVert#1\right\rVert}
\newcommand{\abs}[1]{\left\vert#1\right\vert}
\newcommand{\vint}[1]{\left\langle#1\right\rangle}
\newcommand{\T}{\mathbb{T}}
\numberwithin{equation}{section}
\title[Uniqueness]{The $\CalM$-Operator and Uniqueness of Nonlinear Kinetic Equations}
\author{Ricardo Alonso}
\address{Division of Arts and Sciences, Texas A\&M University at Qatar, Education city, Doha}
\email{ricardo.alonso@qatar.tamu.edu} 
\author{Maria Pia Gualdani}
\address{Department of Mathematics, The University of Texas at Austin, 2515 Speedway, Austin TX, 78712}
\email{gualdani@math.utexas.edu}
\thanks{}
\author{Weiran Sun}
\address{Department of Mathematics, Simon Fraser University, 8888 University Dr., Burnaby, BC, Canada V5A 1S6}
\email{weirans@sfu.ca}
\begin{document}
\begin{abstract}
We introduce an $\CalM$-operator approach to establish the uniqueness of continuous or bounded solutions for a broad class of Landau-type nonlinear kinetic equations. The specific $\CalM$-operator, originally developed in~\cite{AMSY2020}, acts as a negative fractional derivative in both spatial and velocity variables and interacts in a controllable manner with the kinetic transport operator. The novelty of this method is that it bypasses the need for bounds on the derivatives of the solution - a condition typically sought after in uniqueness arguments for non-cutoff equations. As a result, our method enables us to work with solutions with low regularity. 
\end{abstract}
\maketitle

\section{Introduction} 
The uniqueness of solutions with low regularity or, equivalently, with rough initial data is a central and challenging question in the analysis of nonlinear kinetic equations.  There are several unsolved problems regarding uniqueness; one of the most relevant is the uniqueness of renormalized solutions for the inhomogeneous Boltzmann equation \cite{DL}.  %and uniqueness of entropic solutions (also called $H$ solutions) \cite{Villani98} for homogeneous Boltzmann and Landau equations. Although homogeneous equations are, in general, simpler to treat, they give important insights about the inhomogeneous counterpart. In this setting, the most general results of uniqueness for homogeneous non-cutoff Boltzmann and Landau problems can be found in \cite{Fournier08} and \cite{Fournier10}, respectively, with an analysis based on the Wasserstein distance. In addition, the recent work ~\cite{HJJ2025} settles the question of uniqueness of solution to the homogeneous Landau-Coulomb equation for initial data in polynomially weighted {\em{negative}} Sobolev spaces. These techniques, however, are not easily adaptable to the inhomogeneous case. 
In the context of the inhomogeneous cutoff Boltzmann equation close to equilibrium, one of the most general uniqueness results can be found in \cite{GMM}, which relates to the uniqueness of bounded solutions. In contrast, the analogous question of uniqueness of bounded solutions to the non-cutoff Boltzmann or Landau equations remains open. The most general results require H\"older regularity in the spatial variable~\cite{AMUXY2011,DLSS2020, HST2020, HST2025, HW2024}. Reducing the regularity of solutions to mere boundedness is the key motivation of the present work.

In this paper, we will focus on inhomogeneous Landau-type equations and establish the uniqueness within the classes of continuous or bounded solutions. More specifically, we study two types of equations. The first is a toy model 
\begin{align} \label{eq:toy-original-1}
 \del_t f + v \cdot \nabla_x f 
&= \nabla_v \cdot \vpran{\rho(t, x) \vint{v}^\beta \nabla_v f}, 
\qquad
  \rho(t, x) =  \int_{\R^3} f(t, x, v) \dv, 
%\\
% f|_{t=0} & = f_0 (x, v),  \nn
\end{align}
%with $\beta \leq 2$,
and the second one is a viscous  Landau-Coulomb equation 
\begin{align}\label{FPL-1}
\partial_t f + v \cdot \nabla_x f &= \nabla_v \cdot \vpran{A[f] \nabla_v f - f \nabla_v a[f]} +\nu \Delta_v f,
%\\
% f|_{t=0} & = f_0 (x, v),  \nn
\end{align}
with $\nu > 0$ and $A[f], \; a[f]$ defined as
\begin{align*}
A[f] (x,v,t)& := \frac{1}{8\pi} \int_{\mathbb{R}^3} \frac{\mathbb{P}(v-z) }{|v-z|} f(x,z,t)\;dz,
\qquad 
\mathbb{P}(z) := \mathbb{I}d - \frac{z \otimes z}{|z|^2},
\\
a[f] (x,v,t) &:= \frac{1}{4\pi} \int_{\mathbb{R}^3} \frac{1 }{|v-z|} f(x,z,t)\;dz.
\end{align*}

The main results in this paper can be informally summarized as follows:
\begin{thm}%[Main Theorem] \label{thm:main-summary}

 \Ni (a) Let $\beta \leq 2$. Equation ~\eqref{eq:toy-original-1} has a unique solution that is continuous in $(t,x,v)$ and has a fast enough polynomial decay tail in $v$.  

\medskip

\Ni (b) Let $\nu > 0$ be arbitrary. Equation ~\eqref{FPL-1} has a unique solution that is continuous in $(t,x,v)$ with a fast enough polynomial decay tail in $v$.

\medskip
\Ni (c) Let $\nu > 0$ be arbitrary and $k_0$ large enough. Suppose $\norm{\vint{v}^{k_0} f_0}_{L^\infty_{x,v}}$ is small enough with its bound depending on $\nu$. Then~\eqref{FPL-1} has a unique solution that is bounded in $(t,x,v)$ with a sufficiently fast polynomial decay tail in $v$. 
\end{thm}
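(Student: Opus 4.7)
The plan is to proceed by the standard difference argument. Given two solutions $f_1,f_2$ of the same equation sharing an initial datum, set $g=f_1-f_2$ and aim to show that a weighted norm of $\CalM g$ vanishes identically, hence $g\equiv 0$. For all three equations the PDE for $g$ contains ``bad'' source terms in which a velocity derivative lands on $f_j$; e.g.\ for~\eqref{eq:toy-original-1},
\begin{equation*}
(\del_t+v\cdot\nabla_x)g
= \nabla_v\cdot\bigl(\rho_1\vint{v}^\beta\nabla_v g\bigr)
+\nabla_v\cdot\bigl((\rho_1-\rho_2)\vint{v}^\beta\nabla_v f_2\bigr),
\end{equation*}
and analogously for~\eqref{FPL-1} with the Coulomb coefficients $A[\cdot]$ and $\nabla_v a[\cdot]$. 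Since the hypotheses on $f_j$ go no higher than continuity, $\nabla_v f_j$ is not directly meaningful and a classical $L^1$-argument based on testing with $\sgn(g)$ collapses. The central idea taken from~\cite{AMSY2020} is instead to test with $\CalM(\sgn(g))$, equivalently to estimate $\CalM g$ itself in a good norm: because $\CalM$ acts as a negative fractional derivative in $(x,v)$, the integration by parts that transfers $\nabla_v$ from $f_j$ onto the test function costs only a fractional derivative, which $\CalM$ absorbs.

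Concretely I would: (i)~write out the PDE for $g$ in each case, isolating the terms involving $\nabla_v f_j$; (ii)~apply $\CalM$ and analyze the commutator $[\CalM,v\cdot\nabla_x]$, which by the construction of $\CalM$ is of lower order than the velocity dissipation on the right and can be absorbed via Gr\"onwall; (iii)~close the bad terms by integration by parts, estimating what remains by a weighted $L^\infty$ norm of $\CalM g$ times a finite quantity built from $f_1,f_2$, $A[\cdot]$ and $a[\cdot]$; (iv)~conclude $\|\CalM g(t)\|=0$ by Gr\"onwall. For~(a), the polynomial decay tail together with $\beta\le 2$ keeps $\vint{v}^\beta$ tame, and continuity of $f_j$ supplies the modulus of continuity required in step~(ii). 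For~(b), the Coulombic convolutions $A[\cdot],\nabla_v a[\cdot]$ introduce Riesz-type singularities, but the viscous term $\nu\Delta_v g$ provides a full velocity dissipation which, once fed into $\CalM$, dominates those singular contributions via standard Riesz-kernel bounds.

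Part~(c) is the delicate one: dropping continuity down to boundedness removes the modulus-of-continuity input to the commutator estimate, and the smallness assumption plays the role of compensation. Using the linear viscous smoothing of $\nu\Delta_v$ together with a bootstrap, the hypothesis $\|\vint{v}^{k_0}f_0\|_{L^\infty_{x,v}}\ll 1$ propagates to $\|\vint{v}^{k_0}f_j(t)\|_{L^\infty_{x,v}}\ll 1$ for all $t\ge 0$, with a threshold depending on $\nu$. This smallness enters the bad terms as a multiplicative prefactor and they are then absorbed into the $\nu\Delta_v$-dissipation of $\CalM g$ on the left-hand side. The weight $k_0$ must be chosen large enough that the Coulomb convolutions of $A[g]$ and $\nabla_v a[g]$ are controlled pointwise in $x$ uniformly in $v$.

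The main obstacle is the quantitative analysis of the $\CalM$-operator in interaction with the full Landau structure: showing that $[\CalM,v\cdot\nabla_x]$ is strictly lower order than $\nu\Delta_v$ (or than $\rho\vint{v}^\beta\Delta_v$ in~(a)) once paired with the dual of $\CalM g$, and showing that the singular Riesz convolutions composed with $\CalM$ still define bounded operators on the weighted $L^\infty$ space in which the estimate closes. This last point is particularly delicate in~(c), where only the combination of the fractional smoothing of $\CalM$, the weight $\vint{v}^{k_0}$, and the smallness of $f_0$ is strong enough to absorb the full Landau nonlinearity into the viscous dissipation; calibrating the smallness threshold against $\nu$ and the implicit constants in the $\CalM$-commutator bounds is the crux of the argument.
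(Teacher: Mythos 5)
Your high-level philosophy matches the paper --- form the difference equation, apply $\CalM$, exploit the commutator with transport and the diffusion on the right --- but the functional framework you propose is not the one the paper uses, and I think it would not close. You suggest testing with $\CalM(\sgn(g))$ and closing the estimate ``in a weighted $L^\infty$ norm of $\CalM g$'' via Gr\"onwall. The operator $\CalM$ is a Fourier multiplier; its natural home is $L^2_{x,v}$ (where $\|\CalM h\|_{L^2}\le\|h\|_{L^2}$ by Plancherel), and none of its commutator properties (Lemmas~\ref{lem:M-comm-x}--\ref{lem:M-comm-x-v}) are stated, or likely even true, in $L^\infty$. Moreover $\sgn(g)$ is not in $L^2$, so $\CalM(\sgn g)$ is not well defined in the relevant calculus. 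The paper instead performs an $L^2$ energy estimate over a \emph{double} time integral $\int_0^{T_0}\!\int_0^T$, exploiting that $M(t,T)|_{t=T}=1$, so the left-hand side produces $\frac12\|w\|_{L^2_{t,x,v}}^2$ directly. There is also no Gr\"onwall iteration: once all terms are absorbed, the conclusion comes from choosing $\delta,a,T_0$ small so that the right-hand side is a strict fraction of $\|w\|^2_{L^2_{t,x,v}}$, hence zero.

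Two concrete structural ingredients of the proof are missing from your plan. First, for part (a) the paper constructs a \emph{weighted} version of $\CalM$ via a dyadic decomposition: $w$ is localized to rings $|v|\sim 2^n$ by a partition of unity $\theta_n$, and the operator $\CalM_n$ has $\delta$ replaced by $\delta\,2^{\beta n}$. This is what balances the weight $\vint{v}^\beta$ in the diffusion against the unweighted backward diffusion $-\delta(I-\Delta_v)\CalM$ from the transport commutator; without it the argument simply fails for $\beta\ne 0$, especially $\beta<0$. Second, you do not mention the decomposition of the leading coefficient ($\rho_g$ in (a), $A[f_1]$ in (b)) into a mollified smooth part plus a small remainder, $\rho_g=\rho_g\ast\phi_a+(\rho_g-\rho_g\ast\phi_a)$. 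The smooth part is commuted with $\CalM$ at a cost $O(T/a^5)$; the remainder is controlled using the \emph{uniform convergence} $\|\rho_g-\rho_g\ast\phi_a\|_{L^\infty}\to 0$. This is precisely where continuity of the solutions enters --- not as a ``modulus of continuity input to the transport commutator,'' as you write. In (c), where only boundedness is assumed, that uniform-convergence input is unavailable; the paper compensates by replacing the regular/singular split of $A[f_1]\nabla w$ with the identity $\nabla\otimes\nabla:(A[f_1]w)-2\operatorname{div}_v(w\nabla a[f_1])$ and then absorbing the $\frac{1}{\delta\Eps\nu}\|g_i\|^2$ factor via the smallness hypothesis~\eqref{init_cond_small}, not via a bootstrap from $f_0$.

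Finally, your claim that the commutator $[\CalM,v\cdot\nabla_x]$ ``is of lower order than the velocity dissipation by the construction of $\CalM$'' is imprecise: the commutator has the same order as $-\Delta_v\CalM$; what makes it absorbable is the explicit prefactor $\delta$, chosen small relative to $\nu$ (or to $c_0$). The order of parameter choices ($\delta$, then $a$, then $T_0$) is therefore a load-bearing part of the argument, not a routine Gr\"onwall step.
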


The precise statements can be found in Theorem~\ref{thm:unique-w-toy}, Theorem~\ref{Thm_Landau_0} and Theorem~\ref{Thm_Landau}. 

\smallskip

In the literature, a common approach to proving the uniqueness of solutions to nonlinear diffusion equations is to identify a suitable function space in which some derivatives in $x$ or $v$ of the solution are bounded. Using~\eqref{eq:toy-original-1} as an example, the difference between two solutions with same initial data, $w := f - g$, satisfies 
%we give a quick explanation of the reasoning. To show uniqueness, let $f, g$ be two solutions to~\eqref{eq:toy-original-1} with the same initial data and let $w = f - g$. Then $w$ satisfies
\begin{align*}
 \del_t w + v \cdot \nabla_x w 
&= \nabla_v \cdot \vpran{\rho_f(t, x) \vint{v}^\beta \nabla_v w}
+ \nabla_v \cdot \vpran{\rho_w(t, x) \vint{v}^\beta \nabla_v g}, 
\\
 w|_{t=0} & = 0,
\end{align*}
where $\rho_f(t, x) =  \int_{\R^3} f(t, x, v) \dv$ and $\rho_w(t, x) =  \int_{\R^3} w(t, x, v) \dv$. In a basic energy estimate, the last term would need a uniform bound in the variable $x$ for $\nabla_v g$. Several uniqueness results for inhomogeneous kinetic equations present in the literature follow this approach.  The most recent ones on the inhomogeneous Landau or non-cutoff Boltzmann equations are included in \cites{AMUXY2011,DLSS2020, HST2020, HST2025, HW2024}. In~\cite{AMUXY2011}, it is shown that the non-cutoff Boltzmann equation with $2s$-singularity has unique solution in $L^\infty_{t,x}H^{2s}_v$. This corresponds to $L^\infty_{t,x}H^{2}_v$ for the Landau equation. In~\cite{DLSS2020}, uniqueness of solutions to the non-cutoff Boltzmann and Landau equations is shown in the class of functions near the global Maxwellian in the space $X = L^1_k L^\infty_T L^2_v$ composed of the Wiener algebra. The space $X$ is stronger than $C^{1/2}_x L^\infty_T L^2_v$, where $C^{1/2}_x$ denotes the H\"older space of order $1/2$ in $x$. In recent work \cite{HST2020, HST2025}, kinetic Schauder estimates have been employed to prove the uniqueness of solutions to the non-cutoff Boltzmann and Landau equations if the initial data have an arbitrary amount of H\"older regularity in both $x$ and $v$. These results have recently been improved by~\cite{HW2024} to initial data with H\"older regularity in $x$ and logarithmic modulus of continuity in $v$.
%, where it is shown that the uniqueness of the Landau equation holds if the initial data is H 'older continuous in $x$ but only has a logarithmic modulus of continuity in $v$. 

In this paper, we take a different point of view and make use of negative Sobolev spaces. To illustrate the idea, we consider again the toy model~\eqref{eq:toy-original-1} with $\beta = 0$ without transport term:
%. In this case, the equation can be written as
%\begin{align*}
 %\del_t f + v \cdot \nabla_x f 
%&= \Delta_v \vpran{\rho(t, x) f}. 
%\end{align*}
%Without the kinetic transport term $v \cdot \nabla_x f$, the equation is further reduced to 
\begin{align} \label{eq:toy-simp}
 \del_t f 
= \Delta_v \vpran{\rho(t, x)  f}.
\end{align}
We now use $(-\Delta_v)^{-1} f$ as a test function. This is effectively equivalent to making $H^{-1}$ estimates on 
\begin{align*} %\label{eq:toy-simp}
 \del_t (-\Delta_v)^{-1} f 
= \rho(t, x)  f,
\end{align*}
for which the derivative on $f$ does not need to be bounded. Unfortunately, direct application of $(-\Delta_v)^{-1}$ to the kinetic transport term no longer works, as its commuter with $v \cdot \nabla_x$ induces differentiation in $x$. 

To address this issue, we propose to use a pseudo-differential operator $\mathcal{M}$ that plays the role of $(-\Delta_v)^{-1}$ and commutes with $v \cdot \nabla_x$ in a controllable way. This operator will feature a time-averaged {\em{negative}} Sobolev derivative with intermediate time. Operators with a time-averaged {\em{positive}} Sobolev derivatives already appeared in studies of the well-posedness of nonlinear kinetic equations (see, for example, \cite{MX2020}). The pseudo-differential operator we use here is inspired by the one introduced in~\cite{AMSY2020}, used to prove the $L^2_x H^{-s}_v$ to $L^2_x L^2_v$ regularization estimates for the linearized Boltzmann equation. Let $\widehat{w}(\eta,\xi) $ be the Fourier transform of $w$ in $(x, v)$. We consider the Fourier multiplier operator with the symbol ${M}$ as 
\begin{align} \label{def:M}
{M}(t,T, \eta,\xi) := \frac{1}{\left( 1 + \delta \int_t ^T \langle \xi +(t- \tau) \eta\rangle^{2} \;d\tau\right)^{\frac{1}{2}+ \Eps}}.
\end{align}
and define ${\mathcal{M}}$ as 
$$
{\mathcal{M}} w:= ({\mathcal{F}_{(x,v) \to (\xi, \eta)}})^{-1}({M} \widehat{w} ).
$$
Here, $\Eps$ and $\delta$ are positive constants that can be chosen as needed. Precise properties for (\ref{def:M}) are summarized in Section~\ref{Sec:M-operator}. However, let us analyze the main characteristics. The commutator of $\CalM$ with $v \cdot \nabla_x$ generates a backward diffusion in $v$, whose strength is proportional to the parameter $\delta$: 
\begin{align} \label{intro:commut}
[\CalM, \del_t + v \cdot \nabla_x]  
\sim  
- \delta (I - \Delta_v) \mathcal{M}.
\end{align}
This commutator term will be controlled by the diffusion on the right-hand side of~\eqref{eq:toy-original-1} or by viscosity term of~\eqref{FPL-1}. 
In the case of (\ref{eq:toy-original-1}) the loss of weights (for $\beta <0$) can be overcome by building a negative velocity weight in $\CalM$: in Section~\ref{Sec:toy} we introduce a dyadic decomposition in $M$ to localize the velocity in each ring where $|v| \in [2^n, 2^{n+2}]$ for each $n \in \N$. 
Unfortunately, the dyadic decomposition approach is not yet applicable to the anisotropic diffusion in the original Landau-Coulomb equation; therefore, in the proof of the uniqueness of the solution to (\ref{FPL-1}) we use the added viscosity $ \nu \Delta_v$ to control (\ref{intro:commut}).

This work makes two main contributions. First, it addresses the longstanding question regarding the largest function spaces in which uniqueness holds for the original non-cutoff Boltzmann or Landau equations. Given the two examples of nonlinear inhomogeneous equations presented in this work, it is reasonable to believe that the uniqueness of bounded solutions may indeed hold for the Boltzmann and Landau equations.

Second, we introduce a novel approach for studying uniqueness properties of inhomogeneous kinetic equations by employing pseudo-differential operators acting on negative Sobolev spaces in the $(x, v)$ variables. The $\CalM$-operator carefully takes into account the specific structure of the transport operator and is designed to work well with negative Sobolev spaces. Although the two models considered here have structural advantages compared to the original Landau or Boltzmann equations, existing strategies from previous works, based on direct energy methods or Schauder estimates, do not seem sufficient to establish uniqueness within the same regularity classes considered here in the Main Theorem. 

To further highlight the potential of using negative Sobolev spaces, we recall a study on the homogeneous Landau-Coulomb equation~\cite{HJJ2025}, which was brought to our attention very recently. In~\cite{HJJ2025}, the uniqueness (together with the global existence) is shown for initial data in a weighted space $H^{-r} \cap L^1 \cap L\log L$ with $r \geq 1/2$. The main techniques used in~\cite{HJJ2025} include a careful analysis using Littlewood-Paley type estimates based on localization in the frequency space. A very interesting result in~\cite{HJJ2025} is the novel uniqueness proof for the boardeline case $r=1/2$.  In particular, this new estimate avoids the standard route of applying the framework in~\cite{Fournier10}, which is based on establishing that the solution lies in $L^1(0, T; L^\infty(\R^3))$. It is unclear, however, whether this approach can be easily adapted to inhomogeneous equations; the localized pseudo-differential operators used in \cite{HJJ2025} do not commute well with the transport operator.

%The energy estimates in the negative Sobolev space $H^{-1/2}(\R^3)$ lead to a significant improvement in the blow-up rate near $t = 0$ of the $L^\infty_v$-norm of the solution: instead of $t^{-3/2}$ based on the De Giorgi method, the rate is improved to $t^{-3/4}$. Hence the solution belongs to $L^1(0, T; L^\infty)$, which is sufficient to guarantee uniqueness~\cite{Fournier10}. 
%The localized pseudo-differential operators introduced in~\cite{HJJ2025} do not apply to the inhomogeneous case since they do not commute well with the transport operator. 

The resemblance of systems~\eqref{eq:toy-original-1} and~\eqref{FPL-1} to the original non-cutoff Boltzmann and the Landau-Coulomb equations, along with the success in~\cite{HJJ2025} strengthens our belief that the use of negative Sobolev spaces is a viable approach to tackling the uniqueness problems of the original kinetic equations. Although we only consider Landau-type equations in the present work, preliminary analysis suggests that our analysis also  applies to non-cutoff  Boltzmann equations with a viscous term. The particular structure of the collision operator does not appear to be essential. A full analysis is left for future work.  

Let us conclude by mentioning additional uniqueness results available for homogeneous kinetic equations. As mentioned before, for the Landau-Coulomb equation,  \cite{Fournier10} shows that if the solution belongs to $L^1(0,T,L^{\infty}(\mathbb{R}^d))$, then it is unique.  In addition, it is known that there exists a unique smooth solution if the initial data belongs to $L^p(\mathbb{R}^d)$ for $p>d/2$, see \cite{GGL25}. 
%The uniqueness for rougher initial data is still an outstanding problem. 
%
%In the recent work of~\cite{HJJ2025}, the uniqueness (together with the global existence) is shown for initial data in a polynomially weighted $H^{-1/2} \cap L^1 \cap L\log L$ space. The main techniques used in~\cite{HJJ2025} include a careful analysis involving Littlewood-Paley type of estimates based on localization in the frequency space. The related pseudo-differential operators do not apply directly to the inhomogeneous case since they do not commute well with the transport operator. 
For hard potentials, uniqueness was established in \cite{DV00} and, more recently, in \cite{FH21} for more general initial data. For moderately soft potential \cite{ALL15} shows the uniqueness of smooth solutions.  For the homogeneous Boltzmann equation refer to \cite{Fournier08} and the references therein for uniqueness results in both the cutoff and non-cutoff setting. Analysis of inhomogeneous kinetic equations similar to ~\eqref{eq:toy-original-1} can be found in \cite{Ancheschi-Zhu}, \cite{LWY18} and \cite{Imbert-Mouhot}.

\subsection{Notation} The bracket $\langle v \rangle $ is a short notation for $ (1+ |v|^2)^{1/2}$. We denote
\begin{align*}
 L^\infty(\T^3; L^q(\R^3))
&=L^\infty_x L^q_v
= \left\{f | \, \text{esssup}_{x \in \T^3} \vpran{\int_{\R^3} \abs{f(x, v)}^q \dv}^{1/q} < \infty \right\},
\\
 L^\infty(\T^3; L^\infty_{k_0}(\R^3))
&=L^\infty_x L^\infty_{k_0}
= \left\{f | \, \text{esssup}_{(x, v) \in \T^3 \times \R^3} \abs{\vint{v}^{k_0} f(x, v)} < \infty \right\},
\\
 L^\infty([0, T] \times \T^3; L^q_{k_0}(\R^3))
&=L^\infty_{t, x} L^q_v
= \left\{f | \, \text{esssup}_{t \in [0, T], x \in \T^3} \norm{\vint{v}^{k_0} f(t,x,\cdot)}_{L^q(\R^3)} < \infty \right\}.
\end{align*}
Similar notations apply to $L^2_{t,x,v}$ and $L^2_{T,t,x,v}$, where the subindices encode the information of the integration domains. For presentation purposes, we often suppress the volume elements $\dx\dv\dt$ or $\dx\dv\dt\dT$ in long integrals. In such cases, the integration domains will be explicitly stated to avoid any ambiguity. We also use $\nabla$ to denote $\nabla_v$, the gradient in the $v$-variable. The $x$-gradient is specified as $\nabla_x$.

\section{Properties of the $\CalM$-operator and technical lemmas} \label{Sec:M-operator}
In this section, we state the main properties of the $\CalM$-operator, and  prove several technical lemmas. Let $\mathcal{F}$ denote the Fourier transform in $(x, v)$. We define the Fourier multiplier $\CalM$ with symbol ${M}$ as %the operator $\CalM$ such that 
$$
{\mathcal{F}}({\mathcal{M}} w):= {M} {\mathcal{F}}(w) ,
$$
with   
\begin{align*} %\label{def:M}
{M}(t,T, \eta,\xi) := \frac{1}{\left( 1 + \delta \int_t ^T \langle \xi +(t- \tau) \eta\rangle^{2} \;d\tau\right)^{\frac{1}{2}+ \Eps}},
\qquad 
\Eps, \delta > 0.
\end{align*}
In the next lemma, we summarize the key properties of $\CalM$ and its symbol $M$. The Fourier transform of a function $h$ in $x, v$ is denoted as $\hat h$.
\begin{lemma} 
\label{lem:M-basic}
Let $h$ be any smooth enough function; we have  % in $L^2_{t,\eta,\xi}((0,T_0), \mathbb{R}^3, \mathbb{R}^3) $
\begin{align*}
\| \CalM h\|_{L^2_{x,v}} \le \|h\|_{L^2_{x,v}},
\end{align*}
\begin{align}
{\CalM} (\partial_t  h   -v \cdot \nabla_x   h)  
& = \vpran{\partial_t - v \cdot \nabla_x } ({\CalM} h) + [\CalM, \partial_t - v \cdot \nabla_x ]h,\label{trasp_comm}
\end{align}
where 
$$
\mathcal{F}([\CalM, \partial_t  h   -v \cdot \nabla_x ]h) = - \delta \left(\tfrac{1}{2}+ \Eps\right)\tfrac{\langle \xi \rangle^{2}}{1 + \delta \int_t ^T \langle \xi +(t- \tau) \eta\rangle^{2} \;d\tau} {M}\mathcal{F}({h}), 
$$  
\begin{align}
 & \int_t^{T_0}  \langle \xi \rangle ^2 {M}^2(t,T, \eta,\xi)  \dT 
\le    
 \frac{2}{\Eps \delta}, \label{int_xi_M2}
\\
 \int_0^{T_0}  \int_0^{T}  \int_{\R^3}  &|\xi|^2 | {M} \hat{h}| ^2d\xi  \dt \dT \le \frac{2}{\Eps \delta} \int_0^{T_0} \int_{\R^3} | \hat{h} |^2  \dxi \dt.  \label{lapl}
\end{align}
\end{lemma}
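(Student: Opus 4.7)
My plan is to verify each of the four assertions directly in Fourier variables. Write $F(t,T,\eta,\xi) := 1 + \delta\int_t^T \langle \xi + (t-\tau)\eta \rangle^2\, d\tau$, so that $M = F^{-(1/2+\epsilon)}$. The $L^2$ bound is immediate: since $F \ge 1$ pointwise one has $|M|\le 1$, and Plancherel gives $\|\mathcal{M} h\|_{L^2_{x,v}} \le \|h\|_{L^2_{x,v}}$.

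For the commutator identity, I would compute on the Fourier side, where $\mathcal{M}$ acts as multiplication by $M$. The crucial algebraic observation is that the argument $\xi+(t-\tau)\eta$ inside $F$ is a constant of motion for the corresponding Fourier-side first-order operator: $(\partial_t - \eta\cdot\nabla_\xi)(\xi+(t-\tau)\eta) = \eta - \eta = 0$. Consequently $(\partial_t - \eta\cdot\nabla_\xi)F$ retains only the boundary contribution from differentiating the lower endpoint of the $\tau$-integral, yielding $(\partial_t - \eta\cdot\nabla_\xi)F = -\delta\langle\xi\rangle^2$. The chain rule applied to $M = F^{-(1/2+\epsilon)}$ then gives $(\partial_t - \eta\cdot\nabla_\xi)M = \delta(1/2+\epsilon)\langle\xi\rangle^2 F^{-(3/2+\epsilon)}$, and the stated commutator formula follows after identifying the Fourier symbol of $\partial_t - v\cdot\nabla_x$ and passing back to physical space.

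For~\eqref{int_xi_M2}, the heart of the matter is the pointwise lower bound $F(t,T,\eta,\xi) \ge 1 + \tfrac{\delta}{4}(T-t)\langle\xi\rangle^2$. To derive it, substitute $s = \tau - t$ and expand
\[
\int_0^{T-t}\langle \xi - s\eta\rangle^2\, ds = (T-t) + (T-t)|\xi|^2 - (T-t)^2\,\xi\cdot\eta + \tfrac{(T-t)^3}{3}|\eta|^2.
\]
Viewed as a quadratic form in $\eta$ for fixed $\xi$ and $h := T-t$, this is minimized at $\eta_\star = 3\xi/(2h)$ with minimum value $h\bigl(1+|\xi|^2/4\bigr) \ge \tfrac{h}{4}\langle\xi\rangle^2$. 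Using this in $M^2 = F^{-(1+2\epsilon)}$ and substituting $u = 1 + \tfrac{\delta}{4}h\langle\xi\rangle^2$ reduces the $T$-integral to $(4/\delta)\int_1^\infty u^{-(1+2\epsilon)}\, du = 2/(\epsilon\delta)$.

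Finally,~\eqref{lapl} follows by Fubini on the triangle $\{0\le t\le T\le T_0\}$, the bound $|\xi|^2\le\langle\xi\rangle^2$, and~\eqref{int_xi_M2} applied to the inner $T$-integral, concluding with Plancherel. I expect the main obstacle to be the lower bound on $F$ itself: because $\langle\xi+(t-\tau)\eta\rangle^2$ can be as small as $1$ at particular $\tau$ (when $(t-\tau)\eta\approx-\xi$), no pointwise comparison with $\langle\xi\rangle^2$ is available, and the recovery of the needed $\langle\xi\rangle^2$ factor relies crucially on time-averaging together with the strict convexity of the quadratic in $\eta$, as extracted by the completing-the-square computation above.
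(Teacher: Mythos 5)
Your proof is correct and follows the same Fourier-side strategy as the paper: Plancherel with $M\le 1$ for the $L^2$ bound, the observation that $\partial_t-\eta\cdot\nabla_\xi$ annihilates the integrand of $F$ so that only the endpoint $\tau=t$ contributes, and then Fubini to reduce~\eqref{lapl} to~\eqref{int_xi_M2}. The one place where you go beyond the paper is the pointwise lower bound $F\ge 1+\tfrac{\delta}{4}(T-t)\langle\xi\rangle^2$, which the paper asserts without justification but which you derive via the completing-the-square minimization over $\eta$; this is a correct and welcome piece of extra detail.
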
 

\begin{proof} 
The first inequality is the consequence of the Plancherel identity and the fact that $M\le 1$: 
$$
\| \CalM h \|_{L^2_{x,v}} = \| M \hat{h} \|_{L^2_{\xi, \eta}} \le  \| \hat{h} \|_{L^2_{x,v}}. 
$$
Product rule and direct calculations yield (\ref {trasp_comm} ) since 
\begin{align*}
{M} (\partial_t \widehat h   -\eta \cdot \nabla_\xi \widehat  h)  =  \partial_t ({M}\widehat h)  -\eta \cdot \nabla_\xi ({M}\widehat  h)  -\widehat  h( \partial_t {M} -\eta \cdot \nabla_\xi {M})
\end{align*}
and 
\begin{align*}
 \partial_t {M} -\eta \cdot \nabla_\xi {M} = \delta \left(\tfrac{1}{2}+ \Eps\right)\frac{ \langle \xi \rangle^{2}}{1 + \delta \int_t ^T \langle \xi +(t- \tau) \eta\rangle^{2} \dtau} {M}. 
\end{align*}
For the second estimate, we first notice that  
\begin{align*}
 1 + \delta \int_t ^T \langle \xi +(t- \tau) \eta\rangle^{2} \dtau   
\geq  
  1+\frac{1}{4} \delta \langle \xi \rangle ^2 (T-t),
\end{align*}
which implies 
\begin{align*}
 \langle \xi \rangle ^2 {M} 
\le 
  \frac{ \langle \xi \rangle ^2}{ \left( 1+\frac{1}{4} \delta  \langle \xi \rangle ^2 (T-t)\right)^{\frac{1}{2}+\Eps}}, 
\end{align*}
and consequently 
\begin{align*}
 \int_t^{T_0}  \langle \xi \rangle ^2 {M}^2 \dT 
\le   
 \frac{2}{\Eps \delta}. 
\end{align*}
The last inequality follows from the previous one after switching the order of integration:
\begin{align*}
\int_0^{T_0}  \int_0^{T}  \int  |\xi|^2 | {M}\widehat h| ^2 \dxi  \dt \dT & = \int_0^{T_0} \int_{\R^3} |\widehat{h}| ^2  \int_t^{T_0}  |\xi|^2 | {M}| ^2 \dT \dxi  \dt \\
& \le \frac{2}{\Eps \delta} \int_0^{T_0} \int_{\R^3} |\widehat h |^2 \dxi \dt. \qedhere
\end{align*}
\end{proof} 

As a preparation for the commutator estimates, we recall the bounds of the derivatives of $\CalM$ in~\cite{AMSY2020}.

\begin{lemma}\label{lem:deriv-M}
Let $M(t,\eta, \xi)$ be the symbol defined in~\eqref{def:M} with $T < 1$. Then for any $\alpha, \beta \in \ZZ^3$ and $\beta \neq 0$, there exists a constant $C$ that only depends on $\Eps, \alpha, \beta$ such that
\begin{align}
\abs{\frac{\nabla_\xi^\alpha  M(t, \eta, \xi)}{M(t, \eta, \xi)}}
&\leq C
 (\la \xi \ra  + (T-t) |\eta| )^{-\min\{|\alpha|, 2\}}, \label{diff:M-xi}
\\
\abs{\frac{\nabla_\eta^\beta  M(t, \eta, \xi)}{M(t, \eta, \xi)} }
\leq C T
& (\la \xi \ra  + (T-t) |\eta| )^{-\min\{|\beta|, 2\}},
\qquad \beta \neq 0.
\label{diff:M-eta}
\end{align}
\begin{proof}
The first inequality~\eqref{diff:M-xi} is shown in Lemma 5.1 of~\cite{AMSY2020}. The second inequality~\eqref{diff:M-eta} follows from the observation that
\begin{align*}
 (t - \tau) \nabla_\xi \vint{\xi + (t - \tau) \eta}
= \nabla_\eta \vint{\xi + (t - \tau) \eta},
\end{align*}
which gives the extra $T$ in~\eqref{diff:M-eta}.
\end{proof}

\end{lemma}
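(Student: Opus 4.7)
The plan is to treat $M$ as a composition $M = F^{-s}$ with $s = \tfrac{1}{2} + \Eps$ and
\begin{equation*}
 F(t, T, \eta, \xi) := 1 + \delta \int_t^T \vint{\xi + (t-\tau)\eta}^2 \dtau,
\end{equation*}
and then to apply a Fa\`a di Bruno expansion together with a sharp pointwise lower bound on $F$. The key structural observation is that $F$ is quadratic in both $\xi$ and $\eta$, so all its derivatives of order at least three vanish identically; explicitly,
\begin{equation*}
 \nabla_\xi F = 2\delta \int_t^T (\xi + (t-\tau)\eta)\dtau, \qquad \nabla_\xi^2 F = 2\delta (T-t)\, \Id, \qquad \nabla_\xi^\alpha F \equiv 0 \ \text{for } |\alpha|\geq 3.
\end{equation*}
Using $(x^{-s})^{(k)} = (-1)^k s(s+1)\cdots(s+k-1)\, x^{-s-k}$ in the Fa\`a di Bruno formula, the ratio $\nabla_\xi^\alpha M / M$ becomes a finite linear combination of products $\prod_i \bigl(\nabla_\xi^{\alpha_i} F / F\bigr)$, summed over partitions of $\alpha$ into multi-indices $\alpha_1, \ldots, \alpha_k$ with each $|\alpha_i| \in \{1,2\}$.

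I would next prove the pointwise lower bound
\begin{equation*}
 F(t,T,\eta,\xi) \geq c \bigl( 1 + \delta(T-t)\vint{\xi}^2 + \delta(T-t)^3 |\eta|^2 \bigr),
\end{equation*}
by expanding $\vint{\xi + (t-\tau)\eta}^2 = 1 + |\xi|^2 + 2(t-\tau)\xi\cdot\eta + (t-\tau)^2|\eta|^2$, integrating in $\tau$, and completing the square (with the cross term absorbed by a weighted Young inequality). Writing $W := \vint{\xi} + (T-t)|\eta|$, this gives $F \gtrsim 1 + \delta(T-t)W^2$, together with the upper bounds $|\nabla_\xi F| \lesssim \delta(T-t)W$ and $|\nabla_\xi^2 F| \lesssim \delta(T-t)$. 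A case distinction on whether $\delta(T-t)W^2 \leq 1$ or $\geq 1$ then yields $F^{-1}|\nabla_\xi F| \lesssim W^{-1}$ and $F^{-1}|\nabla_\xi^2 F| \lesssim W^{-2}$ uniformly. Substituting these into each term of the Fa\`a di Bruno expansion produces $|\nabla_\xi^\alpha M / M| \lesssim W^{-|\alpha|}$; since $W \geq 1$, this dominates the stated bound $W^{-\min\{|\alpha|, 2\}}$ of~\eqref{diff:M-xi}.

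For~\eqref{diff:M-eta} the idea is to exploit the chain-rule identity
\begin{equation*}
 \nabla_\eta \vint{\xi + (t-\tau)\eta} = (t-\tau)\, \nabla_\xi \vint{\xi + (t-\tau)\eta},
\end{equation*}
so that every $\nabla_\eta^{\beta_i} F$ inherits an extra factor $(t-\tau)^{|\beta_i|}$ inside the $\tau$-integrand relative to its $\nabla_\xi^{\beta_i} F$ analogue. Bounding $|t-\tau| \leq T$ and carrying through the same Fa\`a di Bruno bookkeeping produces an overall extra factor $T^{|\beta|}$ compared with the $\xi$-estimate. Because $T < 1$ and $|\beta| \geq 1$, one has $T^{|\beta|} \leq T$, which yields~\eqref{diff:M-eta}.

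The main technical obstacle I anticipate is the uniformity of the two-regime analysis of $F^{-1}|\nabla_\xi^k F|$: one must verify that the competition between the additive constant $1$ in $F$ and the bulk term $\delta(T-t)W^2$ is balanced, through both regimes, so as to produce the clean $W^{-k}$ rate with no stray $\delta$- or $T$-dependent prefactors. Once this is settled, both~\eqref{diff:M-xi} and~\eqref{diff:M-eta} follow by routine bookkeeping, and the $\eta$-case essentially reduces to the one-line identity above.
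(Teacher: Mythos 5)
Your proposal is correct, and it is genuinely more self-contained than the paper's proof. For the $\eta$-derivative estimate you use precisely the paper's observation: the chain-rule identity $\nabla_\eta\vint{\xi+(t-\tau)\eta} = (t-\tau)\,\nabla_\xi\vint{\xi+(t-\tau)\eta}$, together with $|t-\tau|\leq T<1$, produces the extra factor of $T$ (and in fact $T^{|\beta|}\leq T$). For the $\xi$-derivative estimate, however, the paper simply cites Lemma~5.1 of~\cite{AMSY2020}, whereas you reprove it from scratch: write $M=F^{-(1/2+\Eps)}$ with $F=1+\delta\int_t^T\vint{\xi+(t-\tau)\eta}^2\dtau$, observe that $F$ is exactly quadratic in $(\xi,\eta)$ so that Fa\`a di Bruno only produces first- and second-order factors, and control each factor $F^{-1}\nabla^{\alpha_i}F$ by combining the pointwise lower bound $F\gtrsim 1+\delta(T-t)\bigl(\vint{\xi}+(T-t)|\eta|\bigr)^2$ with a two-regime case distinction. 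This route in fact yields the sharper decay rate $\bigl(\vint{\xi}+(T-t)|\eta|\bigr)^{-|\alpha|}$; the saturated exponent $\min\{|\alpha|,2\}$ in the statement reflects the generality of~\cite{AMSY2020}, where the integrand is a fractional power $\vint{\cdot}^{2s}$ with $s<1$ and $F$ is no longer polynomial. The price for self-containedness is modest bookkeeping: one must check that $a|\xi|^2-a^2\,\xi\cdot\eta+\tfrac{a^3}{3}|\eta|^2$ (with $a=T-t$) is uniformly positive definite, which forces the Young weight into a genuinely narrow window rather than being arbitrary, and one must track the $\delta$- and $T$-dependence through the two regimes $\delta(T-t)\bigl(\vint{\xi}+(T-t)|\eta|\bigr)^2$ above and below $1$ so that no stray prefactors survive; as you anticipate, both points work out, and the resulting constant depends only on $\Eps$, $\alpha$, $\beta$ as required.
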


Next, we show three key commutator estimates of the $\CalM$-operator with smooth function in $x$ or $v$ or $(x, v)$. 
\begin{lemma} \label{lem:M-comm-x}
Suppose $\phi \in H^5(\T^3)$. Then for any $f \in L^2(\T^3 \times \R^3)$, we have
\begin{align*}
  \CalM (\phi f) - \phi \CalM f
= \CalA_\phi \CalM f,
\end{align*}
where the operator $\CalA_\phi$ satisfies
\begin{align} \label{bound:toy-commut}
  \norm{\CalA_\phi}_{L^2_{x, v} \to L^2_{x,v}} 
\leq
  C \, T \norm{\phi}_{H^5},
\end{align}
with $T$ being the time in $\CalM$ and $C$ being a generic constant. 
\end{lemma}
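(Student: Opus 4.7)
The plan is to decompose $\phi$ into its Fourier series on $\T^3$ and reduce the estimate to a uniform pointwise bound on the commutator symbol against each exponential $e^{i\zeta \cdot x}$, $\zeta \in \Z^3$. Writing $\phi(x) = \sum_{\zeta \in \Z^3} \hat\phi(\zeta) e^{i\zeta \cdot x}$ and taking the Fourier transform in $(x,v)$, one obtains
\begin{align*}
\CalF \bigl( \CalM(\phi f) - \phi \CalM f \bigr)(\eta, \xi)
= \sum_{\zeta \in \Z^3} \hat\phi(\zeta)\, \frac{M(\eta, \xi) - M(\eta - \zeta, \xi)}{M(\eta - \zeta, \xi)}\, \widehat{\CalM f}(\eta - \zeta, \xi),
\end{align*}
after factoring and using $M(\eta-\zeta, \xi)\hat f(\eta-\zeta, \xi) = \widehat{\CalM f}(\eta-\zeta, \xi)$. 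This identifies $\CalA_\phi$ as a sum of multiplier/translation operators acting on $\CalM f$, whose $L^2$-norm can be controlled $\zeta$ by $\zeta$.

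Next I would establish the pointwise symbol bound
\begin{align*}
\left| \frac{M(\eta, \xi) - M(\eta - \zeta, \xi)}{M(\eta - \zeta, \xi)} \right| \leq C T \la \zeta \ra^{2 + 2\Eps},
\qquad (\eta, \xi) \in \Z^3 \times \R^3,\ \zeta \in \Z^3.
\end{align*}
The mean value theorem and the $|\beta| = 1$ case of Lemma~\ref{lem:deriv-M} give $|M(\eta, \xi) - M(\eta - \zeta, \xi)| \leq C T |\zeta|\, \sup_{s \in [0,1]} M(\eta - \zeta + s\zeta, \xi)$. To compare the supremum with $M(\eta - \zeta, \xi)$, I would then prove the auxiliary two-sided ratio bound $F(\eta_1, \xi)/F(\eta_2, \xi) \leq C \la \eta_1 - \eta_2 \ra^2$ for the base symbol $F(\eta, \xi) := 1 + \delta \int_t^T \la \xi + (t-\tau)\eta\ra^2 \dtau$, uniformly in $\xi, \eta_1, \eta_2$. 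Raising this to the power $1/2 + \Eps$ yields $M(\eta - \zeta + s\zeta, \xi)/M(\eta - \zeta, \xi) \leq C \la \zeta \ra^{1 + 2\Eps}$, and the desired symbol bound follows.

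Combining the two steps, Parseval's identity gives
\begin{align*}
\|\CalA_\phi \CalM f\|_{L^2_{x,v}}
\leq C T \|\CalM f\|_{L^2_{x,v}} \sum_{\zeta \in \Z^3} |\hat\phi(\zeta)|\, \la \zeta \ra^{2 + 2\Eps},
\end{align*}
and Cauchy-Schwarz on $\Z^3$ bounds the last sum by $C \|\phi\|_{H^5}$ provided $\Eps$ is chosen small enough for $\sum_{\zeta \in \Z^3} \la \zeta \ra^{-6 + 4\Eps}$ to converge; since $\Eps$ is at our disposal in the definition of $M$, this is harmless. The main obstacle is the auxiliary $F$-ratio bound, since $M(\eta, \xi)$ can be arbitrarily small at large frequencies and the ratio $M(\eta_1, \xi)/M(\eta_2, \xi)$ is not a priori bounded. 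To handle this, I would use a Cauchy-Schwarz estimate for $|F(\eta_1) - F(\eta_2)|$ in terms of $|\eta_1 - \eta_2|\sqrt{F(\eta_2)}$ and exploit the baseline inequality $F \geq 1$ to absorb the remaining quadratic terms in $|\eta_1-\eta_2|$.
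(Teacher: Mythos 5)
Your proof is correct and follows essentially the same route as the paper: decompose the commutator in Fourier series, bound the symbol ratio pointwise using a mean-value/derivative estimate (Lemma~\ref{lem:deriv-M}) together with a two-sided ratio bound on the base function $F(\eta,\xi) = 1 + \delta\int_0^{T-t}\langle\xi - s\eta\rangle^2\,ds$, and close with Cauchy--Schwarz against the $H^5$ weight. The only difference is that you re-derive the $F$-ratio bound directly (using $\langle\xi-s\eta_1\rangle^2 \le 2\langle\xi-s\eta_2\rangle^2 + 2s^2|\eta_1-\eta_2|^2$ and $F \ge 1+\delta(T-t)$, so that the $\delta(T-t)^3$ term is absorbed for $T<1$), whereas the paper cites Corollary~A.1 of~\cite{AMSY2020} for it; this is an equivalent, self-contained way to obtain the same bound.
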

\begin{proof}
We show the proof in the Fourier space. Let $\CalF$ be the Fourier transform in $(x, v)$. Then
\begin{align*}
& \quad \,
  \vpran{\CalF \vpran{\CalM (\phi f) - \phi \CalM f}}(\xi, \eta)
\\
&= \tfrac{1}{\vpran{1 + \delta \int_0^{T-t} \vint{\xi - \tau \eta}^2 \dtau}^{\frac{1}{2}+\Eps}}
\sum_{k \in \ZZ^3} \phi_k f_{\eta-k} (\xi)
- \sum_{k \in \ZZ^3} \phi_k f_{\eta-k} (\xi)
\tfrac{1}{\vpran{1 + \delta \int_0^{T-t} \vint{\xi - \tau (\eta-k)}^2 \dtau}^{\frac{1}{2}+\Eps}}
\\
& = \sum_{k \in \ZZ^3} \phi_k f_{\eta-k} (\xi)
\underbrace{\vpran{\tfrac{1}{\vpran{1 + \delta \int_0^{T-t} \vint{\xi - \tau \eta}^2 \dtau}^{\frac{1}{2}+\Eps}}
- \tfrac{1}{\vpran{1 + \delta \int_0^{T-t} \vint{\xi - \tau (\eta-k)}^2 \dtau}^{\frac{1}{2}+\Eps}}}}_{I(\xi, \eta, k)},
\end{align*}
where $\phi_k$ and $f_{\eta-k} (\xi)$ are the corresponding Fourier mode. We will show that for each $\xi, \eta$,
\begin{align*}
  I(\xi, \eta, k) = a(\xi, \eta, k) M(\xi, \eta - k),
\end{align*}
where $a(\xi, \eta, k)$ satisfies
\begin{align} \label{bound:toy-M-diff}
  |a(\xi, \eta, k)| 
\leq 
  C T \vint{k}^3, 
\end{align}
where $C$ is independent of $\xi, \eta, k, \delta, \epsilon$. To this end, write
\begin{align*}
 I (\xi, \eta, k)
&= \int_0^1 
 \frac{\rm d}{\ds} 
\vpran{\tfrac{1}{\vpran{1 + \delta \int_0^{T-t} \vint{\xi - \tau (\eta - (1-s) k)}^2 \dtau}^{\frac{1}{2}+\Eps}}} \ds
\\
&= \vpran{\tfrac{1}{2} + \Eps}
   \bigintsss_0^1
   \frac{2\delta \int_0^{T-t} 
     \tau \vpran{\xi - \tau (\eta - (1-s) k)} \cdot k \dtau}{\vpran{1 + \delta \int_0^{T-t} \vint{\xi - \tau (\eta - (1-s) k)}^2 \dtau}^{\frac{3}{2}+\Eps}} \ds.
\end{align*}
Therefore,
\begin{align*}
 \abs{M^{-1} (\xi, \eta - k) I (\xi, \eta, k)}
\leq
  C \, T \, |k| 
  \bigintsss_0^1
  \frac{\vpran{1 + \delta \int_0^{T-t} \vint{\xi - \tau (\eta - k)}^2 \dtau}^{\frac{1}{2}+\Eps}}{\vpran{1 + \delta \int_0^{T-t} \vint{\xi - \tau (\eta - (1-s) k)}^2 \dtau}^{\frac{1}{2}+\Eps}} \ds ,
\end{align*}
where $C$ is a generic constant independent of $\xi, \eta, k, T, \delta$. By Corollary A.1 in~\cite{AMSY2020}, the term in the integrand satisfies
\begin{align*}
 \frac{1 + \delta \int_0^{T-t} \vint{\xi - \tau (\eta - k)}^2 \dtau}{1 + \delta \int_0^{T-t} \vint{\xi - \tau (\eta - (1-s) k)}^2 \dtau}
& \leq  
 C\frac{1 + \delta \vpran{|\xi|^2 + (T-t)^2 |\eta - k|^2} (T-t)}{1+ \delta \vpran{|\xi|^2 + (T-t)^2 \abs{\eta - (1-s) k}^2} (T-t)}
\\
& \leq
  C \vpran{1 + \frac{|\eta - k|^2}{1 + |\eta - k + s k|^2}} 
\\
& \leq C (1 + |k|^2) ,
\end{align*}
where $C$ is a generic constant. The last step holds since 
\begin{align*}
 \frac{|\eta - k|^2}{1 + |\eta - (1-s) k|^2}
\leq
\begin{cases}
  4 |k|^2,  & \text{if } |\eta - k| \leq 2 |k|, 
\\[3pt]
  5, & \text{if } |\eta - k| > 2 |k|. 
\end{cases} 
\end{align*}
Therefore, we have
\begin{align*}
  \abs{M^{-1} (\xi, \eta - k) I (\xi, \eta, k)}
\leq
  C \, T  \vint{k}^3,
\end{align*}
or equivalently, \eqref{bound:toy-M-diff} holds. By Plancherel Theorem, 
\begin{align*}
  \norm{\CalM (\phi f) - \phi \CalM f}_{L^2_{x,v}}^2
&= \int_{\R^3} \sum_{\eta \in \ZZ^3}
  \abs{\CalF \vpran{\CalM (\phi f) - \phi \CalM f}}^2 \dxi 
\\
& = \int_{\R^3} \sum_{\eta \in \ZZ^3}
  \abs{\sum_{k \in \ZZ^3} \phi_k f_{\eta - k} I(\xi, \eta, k)}^2 \dxi
\\
& \leq
 C \, T^2 \int_{\R^3} \sum_{\eta \in \ZZ^3}
  \abs{\sum_{k \in \ZZ^3} \abs{\phi_k} \abs{M(\xi, \eta - k) f_{\eta - k}} \vint{k}^3}^2 \dxi
\\
& \leq
  C \, T^2 \int_{\R^3} \sum_{\eta \in \ZZ^3}
  \vpran{\sum_{k \in \ZZ^3} |\phi_k|^2\vint{k}^{10}}
  \vpran{\sum_{k \in \ZZ^3} (\CalM f)_{\eta - k}^2 \vint{k}^{-4}} \dxi
\\
& \leq  
 C \, T^2 \norm{\phi}_{H^5}^2 \norm{\CalM f}_{L^2_{x,v}}^2,
\end{align*}
which gives the desired bound in~\eqref{bound:toy-commut}.
\end{proof}

For the commutator with $v$-dependent functions, we recall the following lemma from~\cite{AMSY2020}.
\begin{lemma} \label{lem:M-comm-v}
For any $k \in \R$ there exists an $L^2$-bounded operator $A(v, D_v)$ whose symbol belongs
to $S^{-1}_{0,0}$ uniformly with respect to $\eta, \delta, \Eps, T-t$
such that 
\begin{align*}
&\vint{v}^k [\CalM, \vint{v}^{-k}] = [\vint{v}^k , \CalM] \vint{v}^{-k} = A(v, D_v) \CalM
\end{align*}
with
\begin{align*}
\|\vint{v} ( 1+ |D_v|^2)^{1/2}  A(v,D_v) g\|_{L^2_v} 
\leq C \|g\|_{L^2_v}, \quad %\mbox{for} \,\, 
\forall g \in \mathcal{S}(\R^3),
\\[3pt]
\|\vint{v} A(v,D_v) (1+ |D_v|^2)^{1/2}   g\|_{L^2_v} \le C \|g\|_{L^2_v}, \quad %\mbox{for} \,\, 
\forall g \in \mathcal{S}(\R^3),
\end{align*}
where $C >0$ is independent of $\delta, \Eps, T-t$.  
\end{lemma}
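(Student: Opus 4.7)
The plan is to realize the commutator $[\CalM, \vint{v}^{-k}]$ as a pseudodifferential operator and to extract its symbol by combining a telescoping identity in the $v$-frequency with the derivative bounds on $M$ from Lemma~\ref{lem:deriv-M}.

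First, I would pass to the $v$-Fourier variable $\eta$. Since $\vint{v}^{-k}$ is a multiplication operator, it acts as convolution in $\eta$, so
\begin{align*}
 \CalF_v \vpran{\CalM(\vint{v}^{-k} f) - \vint{v}^{-k}\CalM f}(\xi, \eta)
= \int_{\R^3} \vpran{M(\xi, \eta) - M(\xi, \eta')} K(\eta-\eta') \hat f(\xi, \eta') \, {\rm d}\eta',
\end{align*}
where $K = \CalF_v(\vint{v}^{-k})$ (understood as a tempered distribution). The fundamental theorem of calculus then gives
\begin{align*}
 M(\xi, \eta) - M(\xi, \eta') = \int_0^1 (\eta-\eta')\cdot\nabla_\eta M\vpran{\xi, \eta'+s(\eta-\eta')} \, {\rm d}s,
\end{align*}
and the factor $(\eta-\eta')$ corresponds, upon Fourier inversion, to $-i\nabla_v$ acting on $\vint{v}^{-k}$; the resulting $\vint{v}^{k}\nabla_v\vint{v}^{-k}$ decays like $\vint{v}^{-1}$, which is the source of the outer $\vint{v}$ factor in the stated bounds.

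Next, I would use \eqref{diff:M-eta} to control the smoothing provided by $\nabla_\eta M$: the ratio $\abs{\nabla_\eta M(\xi,\zeta)}/M(\xi,\zeta)$ is bounded by $C\,T\,(\vint{\xi}+(T-t)|\zeta|)^{-1}$, which plays the role of $\vint{D_{x,v}}^{-1}$. Reorganising the telescope as
\begin{align*}
 \CalM(\vint{v}^{-k} f) - \vint{v}^{-k}\CalM f = \vint{v}^{-k} A(v, D_v) \CalM f
\end{align*}
identifies $A(v, D_v)$ as a pseudodifferential operator whose symbol lies in $S^{-1}_{0,0}$ uniformly in $\delta, \Eps$ and $T-t$, with higher-order symbol bounds coming from the $|\beta|=2$ case of \eqref{diff:M-eta} together with $\abs{\partial_v^\alpha\vint{v}^{-k}}\lesssim \vint{v}^{-k-|\alpha|}$. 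The two weighted $L^2$ inequalities then follow from a Calder\'on--Vaillancourt argument: the decay $\vint{v}^{-1}$ from $\vint{v}^k\nabla_v\vint{v}^{-k}$ combined with the gain $\vint{\eta}^{-1}$ from $\nabla_\eta M$ shows that $\vint{v}\vint{D_v}A(v,D_v)$, and symmetrically $\vint{v}A(v,D_v)\vint{D_v}$, have symbols in $S^0_{0,0}$ uniformly, hence act boundedly on $L^2_v$.

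The step I expect to be the main obstacle is maintaining uniformity of all constants in $\delta$, $\Eps$, and $T-t$. Although the ratios $\nabla_\eta^\beta M/M$ from Lemma~\ref{lem:deriv-M} are already uniform, one must verify that the factor $T$ in \eqref{diff:M-eta} is absorbed rather than amplified when passing from pointwise symbol bounds to $L^2$ operator norms, and that the distributional nature of $K$ does not spoil the symbolic expansion. This is precisely why the conclusion is stated at the level of the symbol class $S^{-1}_{0,0}$ rather than through sharp pointwise kernel bounds.
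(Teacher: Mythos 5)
The paper does not prove this lemma; it states that it is recalled from \cite{AMSY2020}, so there is no in-paper argument to compare against. Judged on its own terms, your sketch has the right overall shape (Fourier side, telescoping, Calder\'on--Vaillancourt), but there are two concrete problems in the execution.

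First, you have swapped the roles of $\xi$ and $\eta$. In the paper's convention, $\xi$ is dual to $v$ and $\eta$ is dual to $x$: this is visible in \eqref{trasp_comm}, where $\partial_t M - \eta\cdot\nabla_\xi M$ is the Fourier image of $\partial_t + v\cdot\nabla_x$. Consequently, multiplication by $\vint{v}^{-k}$ becomes convolution in the $\xi$-variable, and the telescoping identity produces $\nabla_\xi M$, so the relevant estimate is \eqref{diff:M-xi}, not \eqref{diff:M-eta}. The distinction matters: \eqref{diff:M-eta} carries an extra factor $T$, which is precisely what makes the $x$-commutator (Lemma~\ref{lem:M-comm-x}) $O(T)$-small, whereas the $v$-commutator of the present lemma has constants uniform in $T-t$ with no $T$ gain. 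Because $T<1$ your bound would not be numerically false, but the ``main obstacle'' you worry about at the end --- whether the factor $T$ is absorbed or amplified --- is an artifact of having reached for the wrong estimate; with \eqref{diff:M-xi} the issue does not arise.

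Second, and more substantively, the step ``reorganising the telescope as $\vint{v}^{-k}A(v,D_v)\CalM f$'' with $A\in S^{-1}_{0,0}$ is not justified by Lemma~\ref{lem:deriv-M} alone. After the fundamental theorem of calculus you are left with $\nabla_\xi M\bigl(\xi'+s(\xi-\xi'),\eta\bigr)$ evaluated at an intermediate $\xi$-frequency, while the operator $\CalM$ you want to pull out on the right has symbol $M(\xi,\eta)$ at the base frequency. Lemma~\ref{lem:deriv-M} only controls the ratio $\nabla_\xi M(\zeta,\eta)/M(\zeta,\eta)$ at the \emph{same} point $\zeta$. To close the argument you must also compare $M$ at shifted $\xi$-frequencies, e.g. bound $M(\xi,\eta)/M(\xi',\eta)$ for $\xi'$ on the segment --- exactly the role played by Corollary~A.1 of \cite{AMSY2020}, which is invoked for the analogous purpose in the proof of Lemma~\ref{lem:M-comm-x}. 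Without such a comparison, the claimed membership of the symbol of $A$ in $S^{-1}_{0,0}$ (and hence the two weighted $L^2$ bounds) does not follow from what you have written. The clean way to supply both missing ingredients is to run the symbolic expansion of Kumano-go as in the proof of Lemma~\ref{lem:M-comm-x-v}, which packages the intermediate-frequency comparison into the remainder estimate.
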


In general, we have the following commutator estimate. 
\begin{lemma} \label{lem:M-comm-x-v}
Suppose $h \in C^\infty_b(\T^3 \times \R^3)$. Then there exists an $L^2$-bounded operator $\CalT$ whose symbol belong
to $S^{-1}_{0,0}$ uniformly with respect to $\delta, \Eps, T-t$
such that 
\begin{align*}
 [\CalM, h] = \CalT \CalM,
\qquad
\CalT = a(x, v, D_x, D_v) \vint{D_v}^{-1},
\end{align*}
% with
% \begin{align*}
%  \CalT_1 = a_1(x, v, D_x, D_v) \vint{D_v}^{-1},
% \qquad 
%  \CalT_2 = T a_2 (x, v, D_x, D_v) \vint{D_x}^{-1},
% \end{align*}
where $T$ is the time in $\CalM$ and $a$ is a bounded operator in $L^2_{x,v}$ with bounds independent of $\delta, \epsilon, T, t$.
% \begin{align*}
% \norm{\CalT_1 \nabla_v g}_{L^2_{x, v}} 
% \leq C \|g\|_{L^2_{x, v}}, \qquad
% \norm{\CalT_2 g}_{L^2_{x, v}} 
% \leq C T \|g\|_{L^2_{x, v}}, \qquad  
% \forall g \in \mathcal{S}(\R^3).
% \end{align*}
% Here $C >0$ is independent of $\eta, \delta, \Eps, T-t$.
\end{lemma}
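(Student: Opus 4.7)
The strategy is to combine the two preceding commutator lemmas via a Fourier series decomposition of $h$ in the spatial variable, using the derivative bounds of Lemma~\ref{lem:deriv-M} to extract the $\vint{D_v}^{-1}$ gain. First I would expand
\begin{align*}
h(x,v)=\sum_{k\in\ZZ^3}h_k(v)\,e^{ik\cdot x},
\end{align*}
where smoothness of $h\in C^\infty_b(\T^3\times\R^3)$ ensures $\|h_k\|_{C^m_v}\leq C_m\vint{k}^{-m}$ for every $m\geq 0$. For each mode I would split the commutator using the elementary product rule
\begin{align*}
[\CalM,\,h_k(v)\,e^{ik\cdot x}]\,f
= e^{ik\cdot x}\,[\CalM,\,h_k(v)]\,f + [\CalM,\,e^{ik\cdot x}]\bigl(h_k(v)\,f\bigr),
\end{align*}
so that the first term is a generalization of the setting of Lemma~\ref{lem:M-comm-v}, and the second term is exactly the setting of Lemma~\ref{lem:M-comm-x}.

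For the $v$-piece $[\CalM,h_k(v)]$, I would extend the Fourier-side computation used in Lemma~\ref{lem:M-comm-v} from $\vint{v}^{-k}$ to a general smooth bounded $h_k$: writing
\begin{align*}
\bigl([\CalM,h_k(v)]\,f\bigr)^\wedge(\xi,\eta)
= \int_{\R^3} \widehat{h_k}(\eta')\,\bigl(M(\xi,\eta)-M(\xi,\eta-\eta')\bigr)\,\hat f(\xi,\eta-\eta')\,\deta',
\end{align*}
the fundamental theorem of calculus together with the bound $|\nabla_\eta M|/M\leq C\,T\,(\vint{\xi}+(T-t)|\eta|)^{-1}$ from Lemma~\ref{lem:deriv-M} produces a factor with the required $\vint{\eta-\eta'}^{-1}$ decay relative to the input $v$-frequency of $\CalM f$. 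For the $x$-piece $[\CalM,e^{ik\cdot x}]$, Lemma~\ref{lem:M-comm-x} already provides the decomposition $\CalA_k\CalM$ with $\|\CalA_k\|_{L^2\to L^2}\leq CT\vint{k}^3$; to also recover a $\vint{D_v}^{-1}$ gain I would refine that proof by combining the bound $|\nabla_\xi M|/M\leq C(\vint{\xi}+(T-t)|\eta|)^{-1}$ in the fundamental theorem expansion with the $M$-ratio comparison from Corollary~A.1 of~\cite{AMSY2020}, producing an explicit factor $(\vint{\xi-sk}+(T-t)|\eta|)^{-1}$ in the kernel. Additional polynomial losses in $\vint{k}$ and $\vint{\eta'}$ generated by these manipulations are absorbed by the rapid decay of $\widehat{h_k}(\eta')$.

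Summing over $k\in\ZZ^3$ and assembling the two pieces, the commutator takes the form $[\CalM,h]=\CalT\CalM$ with $\CalT=a(x,v,D_x,D_v)\vint{D_v}^{-1}$ and $a$ bounded on $L^2_{x,v}$ uniformly in $\delta,\Eps,T-t$; the Calder\'on--Vaillancourt theorem then verifies that the symbol of $\CalT$ belongs to $S^{-1}_{0,0}$ with constants independent of these parameters.

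The main anticipated obstacle is extracting the $\vint{D_v}^{-1}$ gain in the regime where $\vint{\xi}$ dominates $(T-t)|\eta|$: in that regime the derivative bound $(\vint{\xi}+(T-t)|\eta|)^{-1}$ does not directly yield $\vint{\eta}^{-1}$, and one must trade powers of $\vint{\xi}$ against factors of $\vint{k}$ using the smoothness of $h$, in the same spirit as the $\vint{k}^3$ absorption already employed in Lemma~\ref{lem:M-comm-x}. Maintaining uniformity of all constants with respect to $\delta$, $\Eps$, and $T-t$ down to the endpoint $T=t$ requires careful accounting of these compensating factors.
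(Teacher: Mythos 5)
Your Fourier-series-in-$x$ decomposition of $h$ is a genuinely different route from the paper's, which instead invokes the Kumano-go pseudo-differential composition formula (Theorem 3.1 of \cite{Kuma1982}) to expand $\CalM h - h\CalM$ as a finite sum of terms $\frac{1}{\alpha!}h_{(\alpha)}\CalM^{(\alpha)}$ plus an oscillating-integral remainder, estimates each $\CalM^{(\alpha)}$ directly through Lemma~\ref{lem:deriv-M}, and tracks the cases $\alpha_2=0$ versus $\alpha_2\neq 0$ (the latter producing the extra $T$-factor) by mirroring Lemma~5.6 of \cite{AMSY2020}. That approach delivers the symbol of $\CalT$ and its $S^{-1}_{0,0}$ membership in one pass; your mode-by-mode construction would still need a separate argument to assemble the summed operator into a pseudo-differential symbol class. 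Note also that the Calder\'on--Vaillancourt theorem cannot ``verify'' that a symbol lies in $S^{-1}_{0,0}$ --- it deduces $L^2$-boundedness \emph{from} $S^0_{0,0}$ membership, not the other way around --- so that final step needs replacing by direct symbol estimates.

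There is also a consistent swap of the Fourier variables inside your argument. In the paper's convention $\xi$ is dual to $v$ and $\eta$ is dual to $x$, so $\vint{D_v}^{-1}$ means $\vint{\xi}^{-1}$. Multiplication by $h_k(v)$ convolves $\hat f$ in $\xi$, not $\eta$; the $v$-commutator should therefore read
\begin{align*}
\bigl([\CalM,h_k]f\bigr)^\wedge(\eta,\xi)=\int_{\R^3}\widehat{h_k}(\xi')\bigl(M(\eta,\xi)-M(\eta,\xi-\xi')\bigr)\hat f(\eta,\xi-\xi')\,\dxi',
\end{align*}
and the relevant derivative bound from Lemma~\ref{lem:deriv-M} is the one on $\nabla_\xi M$ (no $T$-factor), while the $x$-piece $[\CalM,e^{ik\cdot x}]$ shifts $\eta$ and is governed by the $\nabla_\eta M$ bound --- this is in fact exactly what the proof of Lemma~\ref{lem:M-comm-x} does when it differentiates along $\eta-(1-s)k$. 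A side effect of the swap is that your ``main anticipated obstacle'' is spurious: the bound $(\vint{\xi}+(T-t)|\eta|)^{-1}\leq\vint{\xi}^{-1}$ already yields $\vint{D_v}^{-1}$ with no case analysis; the regime where $\vint{\xi}$ dominates is the favorable one, not the problematic one. Finally, in the split $[\CalM,h_ke^{ik\cdot x}]f=e^{ik\cdot x}[\CalM,h_k]f+[\CalM,e^{ik\cdot x}](h_kf)$, the second term produces $\CalA_k\CalM(h_kf)$ rather than an operator applied to $\CalM f$; you would need to commute $\CalM$ past $h_k$ once more (generating a lower-order correction) to reach the factored form $\CalT\CalM f$ claimed in the lemma.
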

\begin{proof} [Sketch of Proof]
The proof is similar to that of Lemma 5.6 in \cite{AMSY2020} and we only explain the steps involving (small) differences. By~\cite{RuTu2010, TuVa1998}, the symbols in $\T^3 \times \mathbb{Z}^3$ can be extended to $\T^3 \times \R^3$. Hence the calculus in $x \in \T^3$ is similar as in $v \in \R^3$. 
It follows from
Theorem 3.1 of \cite{Kuma1982} that for any $N \in \N$,
\begin{equation}\label{2.2}
\CalM h = h \CalM + \sum_{1 \leq |\alpha| <
N} \frac{1}{\alpha!} h_{(\alpha)} \CalM^{(\alpha)} +
r_N(x, v, \eta, \xi),
\end{equation}
where $h_{(\alpha)} (x, v)= D_{x,v}^{\alpha} h(x, v)$,
$\CalM^{(\alpha)}(\xi)= \partial_{\eta,\xi}^{\alpha}\CalM(\eta,\xi)$
and
\begin{align*}
r_N(x, v,\eta, \xi) = N \sum_{|\alpha| = N} \int_0^1
\frac{(1-\tau)^{N-1}}{\alpha !} r_{N,\tau,\alpha}(x, v,\eta, \xi)
\dtau.
\end{align*}
Here
\begin{align*}
r_{N,\tau,\alpha}(x,v,\eta,\xi) = \mbox{Os}- \int \int e^{-iz\cdot \zeta} e^{-i y \cdot \theta}
\CalM^{(\alpha)}(\eta + \tau \theta, \xi+ \tau \zeta) h_{(\alpha)}(x + y, v+z)
\frac{\dy\dz\dtheta\dzeta}{(2\pi)^6},
\end{align*} where ``Os-" means the
oscillating integral, see for example \cite{Kuma1982}. Write the multi-index $\alpha$ as $\alpha = \alpha_1 + \alpha_2$ where $\CalM^{(\alpha)} = \nabla_\xi^{(\alpha_1)} \nabla_\eta^{(\alpha_2)} \CalM$. Separate the cases where $\alpha_2 = 0$ and otherwise. The main difference is that we will use Lemma~\ref{lem:deriv-M} to estimate $\CalM^{(\alpha)}$ instead of Lemma 5.2 in~\cite{AMSY2020}. For $\alpha_2 \neq 0$, this gives the operators containing an extra factor $T$, which is combined into the operator $T$. The rest is identical to the proof of Lemma 5.6 in~\cite{AMSY2020} and is omitted.   
\end{proof}

The rest of this section contains two technical lemmas. 
First, we explain the dyadic decomposition and its properties applied to the velocity variable in $\R^3$. This technique is useful when dealing with weights in kinetic equations. The construction is classical, but we include the details here for the convenience of the reader. Denote $B(x, r) \subseteq \R^3$ as the open ball centered at $x$ with radius $r$ and $\overline {B(x, r)}$ as its closure. 

\begin{lemma} \label{lem:dyadic}
Let $V_k \subseteq \R^3$ be the dyadic rings defined by
\begin{align*}
 V_0 = B(0, 4),
\qquad 
 V_k = B(0, 2^{k+2}) \setminus \overline{B(0, 2^{k})}, 
\qquad 
 k \geq 1.
\end{align*}
Then there exists a partition of unity $\theta_k(v)$ such that for any $k \geq 0$,
\begin{align} \label{cond:theta-k-1}
 0 \leq \theta_k \leq 1, 
\quad
\text{Supp}(\theta_k) \subseteq V_k,
\qquad
\sum_{k=0}^\infty \theta_k (v) = 1, 
\qquad
\sum_{k=0}^\infty \theta_k^2 (v) \geq  \frac{1}{3}, 
\quad \forall v \in \R^3.
\end{align}
Moreover, there exist two constants $C_1, C_2$ independent of $k$ such that
\begin{align}
\label{cond:theta-k-2}
 \abs{\nabla_v \theta_k}
\leq  
 C_1 2^{-k},
\qquad
 \abs{\nabla_v^2 \theta_k}
\leq  
 C_2 2^{-2k},
\quad  
 \forall k \geq 0. 
\end{align}
\end{lemma}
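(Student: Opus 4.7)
The plan is to build $\{\theta_k\}$ as a Littlewood--Paley-style telescoping family obtained from a single fixed radial profile rescaled dyadically. First, I would fix a smooth non-increasing auxiliary function $\phi : [0, \infty) \to [0, 1]$ with $\phi \equiv 1$ on $[0, 1]$ and $\phi \equiv 0$ on $[2, \infty)$, and set
\[
\phi_k(v) := \phi\vpran{|v|/2^{k+1}}, \qquad k \geq 0,
\]
so that $\phi_k \equiv 1$ on $B(0, 2^{k+1})$ and $\phi_k \equiv 0$ outside $B(0, 2^{k+2})$. Since $\phi_k$ is constant near the origin, the non-smoothness of $v \mapsto |v|$ at $v = 0$ is harmless. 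I then define
\[
\theta_0 := \phi_0, \qquad \theta_k := \phi_k - \phi_{k-1} \quad \text{for } k \geq 1.
\]

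Next I would verify the first line~\eqref{cond:theta-k-1}. Pointwise monotonicity $\phi_k(v) \geq \phi_{k-1}(v)$ (from $\phi$ being non-increasing and $|v|/2^{k+1} \leq |v|/2^k$) gives $0 \leq \theta_k \leq 1$. For the support, $\phi_k$ and $\phi_{k-1}$ are both identically $1$ on $|v| \leq 2^k$ and both identically $0$ on $|v| \geq 2^{k+2}$, so $\theta_k$ vanishes outside the closed annulus $\{2^k \leq |v| \leq 2^{k+2}\} \subseteq \overline{V_k}$. The partition of unity property follows by telescoping: $\sum_{k=0}^N \theta_k = \phi_N$, and for any fixed $v$ this equals $1$ as soon as $2^{N+1} \geq |v|$.

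For the sum of squares bound, the key observation is that the supports of $\theta_k$ and $\theta_{k+2}$ are disjoint: the former forces $|v| \leq 2^{k+2}$ while the latter forces $|v| \geq 2^{k+2}$. Hence at any $v$ at most two consecutive $\theta_k$'s are nonzero, with values $a, b \in [0, 1]$ satisfying $a + b = 1$; minimising $a^2 + b^2$ on this set yields $a^2 + b^2 \geq \tfrac{1}{2} \geq \tfrac{1}{3}$. The derivative bounds~\eqref{cond:theta-k-2} come from the chain rule: each differentiation of $\phi_k(v) = \phi(|v|/2^{k+1})$ produces an extra factor $2^{-(k+1)}$, so $|\nabla_v^\alpha \phi_k| \leq C_\phi \, 2^{-|\alpha|(k+1)}$ (trivially on $B(0, 2^{k+1})$ where $\phi_k \equiv 1$, and by direct computation outside), and the two-term sum defining $\theta_k$ only changes the constants by a fixed geometric factor absorbed into $C_1$ and $C_2$. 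No substantive obstacle appears in this lemma; the entire argument is elementary bookkeeping around the explicit dyadic construction.
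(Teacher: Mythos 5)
Your construction is correct in essence but follows a genuinely different route from the paper, and it contains one small technical slip that is worth noting.

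The paper constructs the partition by first choosing radial bumps $\phi_k$ supported in the dyadic annuli and then \emph{normalizing}, setting $\theta_k = \phi_k / \sum_j \phi_j$; the partition-of-unity property is then automatic, while the positivity of the denominator, the $C^2$ bounds of the quotient, and the lower bound on $\sum\theta_k^2$ (which is $1/3$ because three consecutive $\theta_k$ can overlap) require verification. You instead use the classical Littlewood--Paley \emph{telescoping} construction $\theta_0=\phi_0$, $\theta_k=\phi_k-\phi_{k-1}$ with $\phi_k$ a rescaled plateau function. This trades the automatic partition-of-unity for an automatic nonnegativity and smoothness, it makes the derivative bounds immediate from the chain rule, and it yields the slightly stronger bound $\sum\theta_k^2\ge 1/2$ since only two consecutive $\theta_k$'s can be simultaneously nonzero. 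Both are standard and both work; yours is arguably the more economical argument.

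The small gap is in the support statement. With your stated choice ($\phi\equiv 1$ on $[0,1]$, $\phi\equiv 0$ on $[2,\infty)$), $\theta_k$ can be nonzero arbitrarily close to $|v|=2^k$ and $|v|=2^{k+2}$, so $\operatorname{Supp}(\theta_k)$ is the \emph{closed} annulus $\{2^k\le|v|\le 2^{k+2}\}=\overline{V_k}$, which you yourself note; but the lemma requires $\operatorname{Supp}(\theta_k)\subseteq V_k$ with $V_k$ \emph{open}. Relatedly, $\operatorname{Supp}(\theta_k)$ and $\operatorname{Supp}(\theta_{k+2})$ then share the sphere $|v|=2^{k+2}$, so they are not literally disjoint (though both vanish at that sphere, so your conclusion ``at most two consecutive are nonzero'' still holds). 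The fix is routine: take $\phi$ with an interior plateau, e.g.\ $\phi\equiv 1$ on $[0,5/4]$ and $\phi\equiv 0$ on $[7/4,\infty)$; then $\operatorname{Supp}(\theta_k)\subseteq\{\,\tfrac{5}{4}2^k\le|v|\le\tfrac{7}{4}2^{k+1}\,\}\subset V_k$, the supports of $\theta_k$ and $\theta_{k+2}$ are genuinely disjoint, and nothing else in your argument changes. With this adjustment the proof is complete.
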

\begin{proof}
We will choose $\theta_k$ to be radial. To this end, denote $I_k$ as the interval:
\begin{align*}
 I_0 = [0, 4],
\qquad
I_k = [2^k, 2^{k+2}],
\quad
k \geq 1. 
\end{align*}
Let $\phi_0 \in C^\infty[0, \infty)$ satisfy that 
\begin{align*}
 0 \leq \phi_0 \leq 1,
\qquad
\text{Supp}(\phi_0) \subseteq [0, 4],
\qquad
\phi_0 (r) = 1 \,\, \text{for} \,\, 0 \leq r \leq 3.
\end{align*}
Let $\phi_1 \in C^\infty[0, \infty)$ satisfy that 
\begin{align*}
 0 \leq \phi_1 \leq 1,
\qquad
\text{Supp}(\phi_1) \subseteq [2, 8],
\qquad
\phi_1 (r) = 1 \,\, \text{for} \,\, 3 \leq r \leq 7.
\end{align*}
For any $k \geq 2$, let 
\begin{align*}
 \phi_k(r) = \phi_1(2^{-k+1} r). 
\end{align*}
Then $\text{Supp}(\phi_k) \subseteq [2^k, 2^{k+2}]$. Moreover, 
\begin{align*}
  \sum_{k=0}^\infty \phi_k(r)
> 0, 
\qquad 
\forall \, r \in [0, \infty).
\end{align*}
Define
\begin{align*}
 \theta_k(v)
= \frac{\phi_k(|v|)}{\sum_{j=0}^\infty \phi_j(|v|)},
\qquad  
 \forall \, v \in \R^3. 
\end{align*}
Then $\theta_k$ is well-defined and the first three conditions in~\eqref{cond:theta-k-1} hold. Note that
\begin{align*}
  \sum_{k=0}^\infty \theta_k^2(v)
&= \theta_{k-1}^2(v) + \theta_k^2(v)
  + \theta_{k+1}^2(v)
\geq 
 \frac{1}{3} \vpran{\theta_{k-1}(v) + \theta_k(v)
  + \theta_{k+1}(v)}^2
\\
&= \frac{1}{3} \vpran{\sum_{k=0}^\infty \theta_k(v)}^2
= \frac{1}{3}.
\end{align*}
Therefore, the last condition in~\eqref{cond:theta-k-1} holds. 

To bound the derivatives, we note that for any $v \in \text{Supp}(\theta_k)$,
\begin{align*}
 \theta_k (v)
&= \frac{\phi_k(|v|)}{\phi_{k-1}(|v|) + \phi_k(|v|) + \phi_{k+1}(|v|)}
\\
& = \frac{\phi_1(2^{-k+1}|v|)}{\phi_1(2^{-k+2}|v|) + \phi_1(2^{-k+1}|v|) + \phi_1(2^{-k}|v|)}
= h(2^{-k+1} |v|),
\end{align*}
where for $k \geq 1$,
\begin{align*}
 h(y) = \frac{\phi_1(y)}{\phi_1(2y) + \phi_1(y) + \phi_1(y/2)},
\qquad
  2 \leq y \leq 8,
\end{align*}
and for $k=0$, 
\begin{align*}
 \theta_0(v) = \frac{\phi_0(|v|)}{\phi_0(|v|) + \phi_1 (|v|)}, 
\qquad |v| \in [0, 4].
\end{align*}
Therefore, $|\theta_0'| \leq C$ and for $k \geq 1$,
\begin{align*}
 \nabla_v \theta_k
= h'(2^{-k} |v|) 2^{-k} \frac{v}{|v|},  
\qquad  
 \nabla_v^2 \theta_k
= h''(2^{-k} |v|) 2^{-2k}
  \frac{v}{|v|} \otimes 
  \frac{v}{|v|}
  + h'(2^{-k} |v|) 2^{-k}
    \nabla_v \frac{v}{|v|},
\end{align*}
where $|v| \in [2^{k},  2^{k+2}]$. 
By the boundedness of $h', h''$, we obtain the desired bounds in~\eqref{cond:theta-k-2}. Moreover, due to the compact support of $h$, we have
%\begin{align*} %\label{bound:sum-h}
\begin{equation*}
 \sum_{k=1}^\infty |h'(2^{-k} |v|)| < 2 \norm{h'}_{L^\infty(2,8)}. 
\qedhere
\end{equation*}
%\end{align*}
\end{proof}

The final technical lemma in this section concerns the classical uniform convergence of regularized functions via convolution. 

\begin{lemma} \label{lem:kernels}
Suppose $h \in C([0, T_0] \times \T^3)$ and $g \in C([0, T_0] \times \T^3 \times \R^3) \cap L^\infty([0, T_0] \times \T^3; L^\infty_{k_0}(\R^3))$ with $k_0 > 0$. 

\Ni (a) Suppose $h \geq 0$. Then there exists $\phi_a(x) \geq 0$ such that 
\begin{align*}
 h_a(t, x) := h * \phi_a \geq 0, 
\qquad
 h_a \to h \,\, \text{in} \,\,
C([0, T_0] \times \T^3)
\,\,\,
\text{as $a \to 0$}.
\end{align*}

\Ni (b) Let $\psi_a \in C^\infty_c(\T^3 \times \R^3)$ be a mollifier in $(x, v)$. Then 
\begin{align*}
 g_a (t, x, v)
:= g \ast \psi_a 
\to g \,\,\,
\text{in} \,\,\,
C([0, T_0] \times T^3 \times \R^3) \,\,\,
\text{as $a \to 0$.}
\end{align*}
\end{lemma}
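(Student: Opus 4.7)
The plan is to apply standard mollifier convergence arguments, with the only subtlety being the non-compact velocity variable in part (b).

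For part (a), I would take $\phi_a(x)$ to be a standard non-negative symmetric mollifier on $\T^3$, obtained by choosing a non-negative $\rho \in C^\infty_c(\R^3)$ with $\int \rho = 1$, setting $\rho_a(y) = a^{-3}\rho(y/a)$, and periodizing for $a$ small. Non-negativity of $h$ and $\phi_a$ immediately gives $h_a \geq 0$. Since $h$ is continuous on the compact set $[0, T_0] \times \T^3$, it is uniformly continuous there, and the standard pointwise estimate
\[
|h_a(t,x) - h(t,x)|
\leq \int \phi_a(y)\,|h(t, x-y) - h(t,x)|\dy
\leq \sup_{|y| \leq a} |h(t, x-y) - h(t,x)|
\]
delivers $\|h_a - h\|_{L^\infty_{t,x}} \to 0$ as $a \to 0$.

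For part (b), the only real obstacle is that the domain is non-compact in $v$, so uniform continuity of $g$ on the whole domain is not automatic. The plan is to exploit the decay bound $|g(t,x,v)| \leq M \langle v \rangle^{-k_0}$, where $M := \|\langle v \rangle^{k_0} g\|_{L^\infty}$, to reduce to compact velocity sets. Given $\eps > 0$, I would first choose $R$ so large that $M \langle v \rangle^{-k_0} < \eps/3$ for $|v| \geq R$. Assuming the mollifier $\psi_a$ has support in a ball of radius $a \leq 1$ (after the usual rescaling), the convolution satisfies $|g_a(t, x, v)| < \eps/3$ for $|v| \geq R + 1$, since then $g$ is only sampled at points with $|v - w| \geq R$. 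On the compact cylinder $[0, T_0] \times \T^3 \times \overline{B(0, R+2)}$, $g$ is uniformly continuous, so the usual mollifier estimate
\[
|g_a(t,x,v) - g(t,x,v)| \leq \sup_{|(y,w)| \leq a}|g(t, x-y, v-w) - g(t,x,v)|
\]
gives $\|g_a - g\|_{L^\infty([0, T_0] \times \T^3 \times \overline{B(0, R+1)})} < \eps/3$ for $a$ sufficiently small. Combining the estimates on $|v| \leq R+1$ and $|v| \geq R+1$ yields uniform convergence on the whole domain.

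The argument is entirely routine modulo this split into small-$|v|$ and large-$|v|$ regions, and the split is dictated by the hypothesis $g \in L^\infty_{t,x} L^\infty_{k_0}$; no step presents a genuine obstacle.
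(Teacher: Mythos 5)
Your proof is correct and follows essentially the same route as the paper: uniform continuity on a compact cylinder in $(t,x,v)$, combined with the $\langle v\rangle^{-k_0}$ decay to control the large-$|v|$ tail. The only (cosmetic) difference is in the tail estimate — you exploit the compact support of $\psi_a$ to show directly that $|g_a|$ is small for $|v|\geq R+1$, whereas the paper instead derives a uniform bound on $\sup_{v}|v|^{k_0}|g_a|$ by splitting $|v|^{k_0}\lesssim |v-w|^{k_0}+|w|^{k_0}$ inside the convolution; both achieve the identical conclusion.
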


\begin{proof}
(a) For positivity, we can choose $\phi_a$ as a positive summability kernel such as the Fej\'er kernel. The proof of uniform convergence over the compact set $[0, T_0] \times \T^3$ can be found in~\cite{Evans} Appendix C. 

\Ni (b) Once again, the uniform convergence over a compact set $[0, T_0] \times \T^3 \times B(0, R)$ follows from the standard argument in~\cite{Evans}. To obtain the uniform convergence on $[0, T_0] \times \T^3 \times \R^3$, we make use of the decay tail of $g$. Note that 
\begin{align*}
  |v|^{k_0} \abs{g \ast \psi_a}
&\leq 
 \int_{\R^3} |v|^{k_0} |g(v-w)| \psi_a(w) \dw
\\
&\leq
 C \int_{\R^3} |v - w|^{k_0} |g(v-w)| \psi_a(w) \dw
 + C  \int_{\R^3}  |g(v-w)| |w|^{k_0} \psi_a(w) \dw
\\
&\leq 
C\sup\vpran{|v|^{k_0} g} 
+ C \sup |g|
< \infty.
\end{align*}
Hence for any $|v| > R$, we have
\begin{align*}
 \abs{g \ast \psi_a - g} \One_{|v| > R}
\leq
 \abs{g \ast \psi_a} \One_{|v| > R}
 + \abs{g} \One_{|v| > R}
\leq  
 \frac{C}{R^{k_0}} \to 0
\qquad
\text{as $R \to \infty$,}
\end{align*}
which combined with the uniform convergence on $[0, T_0] \times \T^3 \times B(0, R)$ gives the uniform convergence on $[0, T_0] \times \T^3 \times \R^3$.
\end{proof}

\section{A Toy Model}\label{Sec:toy}
We start with a toy model to illustrate the idea. Consider the equation
\begin{align} \label{eq:toy-original}
 \del_t f + v \cdot \nabla_x f 
&= \nabla_v \cdot \vpran{\rho(t, x) \vint{v}^\beta \nabla_v f}, 
\qquad
  \rho(t, x) =  \int_{\R^3} f(t, x, v) \dv, 
\\
 f|_{t=0} & = f_0 (x, v),  \nn
\end{align}
where $\beta \leq 2$. This toy model mimics, to some extent, the Landau equation with both soft and hard potentials, with $\beta < 0$ being soft and $\beta > 0$ being hard. The key difference, when compared with the Landau equation, is that there is no weight gap in the upper and lower bounds of the diffusive term on the right-hand side. The uniqueness of solutions to this toy model suggests that whether the potential is soft or hard does not make an essential difference. It is possible that the condition $\beta \leq 2$ is technical rather than structural. 

Suppose the initial data of $f$ satisfies
\begin{align} \label{cond:toy-initial}
   0 \leq f_0 \leq m_0, 
\qquad  
  \norm{f_0 \vint{v}^{k_0}}_{L^\infty_{x} (L^\infty_v \cap L^1_v)} < \infty, 
\qquad
   \rho(0, x) > c_0 > 0, 
\end{align}
where $m_0, c_0$ are constants and $k_0 > 3 + |\beta|$. 

\begin{lemma} \label{lem:toy-space}
Suppose there exist $T_\ast > 0$ and $k_0 > \max\{3 + |\beta|, 4\}$ such that $\vint{v}^{k_0} f \in C([0, T_\ast] \times \T^3 \times \R^3)$ is a continuous solution to~\eqref{eq:toy-original} with 
initial data satisfying~\eqref{cond:toy-initial} and 
\begin{align*}
  \norm{f \vint{v}^{k_0}}_{L^\infty_{t,x} (L^\infty_v \cap L^1_v)} < \infty.
\end{align*}
Then $\rho$ is uniformly continuous  on $[0, T] \times \T^3$ and for $T_0$ small enough, 
\begin{align} \label{bound:rho-lower}
   \rho(t, x) \geq \frac{c_0}{2}, 
\qquad
   \forall (t, x) \in [0, T_0] \times \T^3.
\end{align}
\end{lemma}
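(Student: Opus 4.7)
The plan is to prove uniform continuity of $\rho$ first, and then the lower bound will be an immediate consequence of the initial lower bound combined with uniform continuity in time.

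For the uniform continuity of $\rho$, I would split the velocity integral into a compact piece and a tail: for any $R > 0$ and $(t,x), (s,y) \in [0, T_\ast] \times \T^3$,
\begin{align*}
|\rho(t,x) - \rho(s,y)|
\leq \int_{|v|\leq R} |f(t,x,v) - f(s,y,v)| \dv
+ \int_{|v|>R} |f(t,x,v) - f(s,y,v)| \dv.
\end{align*}
For the tail, the assumption $\|\vint{v}^{k_0} f\|_{L^\infty_{t,x}L^\infty_v} < \infty$ together with $k_0 > 3$ gives a bound of the form $CR^{3-k_0}$, which can be made arbitrarily small by choosing $R$ large. For the compact part, since $\vint{v}^{k_0} f$ is continuous on the compact set $[0, T_\ast] \times \T^3 \times \overline{B(0,R)}$, it is uniformly continuous there; hence $f$ is uniformly continuous on this set, and we can make the compact piece as small as we want by taking $(t,x)$ close to $(s,y)$. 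A standard $\varepsilon/2$ argument then yields uniform continuity of $\rho$ on $[0, T_\ast] \times \T^3$.

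For the lower bound, the initial condition $\rho(0, x) \geq c_0 > 0$ holds uniformly on the compact set $\T^3$. Since $\rho$ is uniformly continuous on $[0, T_\ast] \times \T^3$, there exists $T_0 \in (0, T_\ast]$ such that
\begin{align*}
\sup_{x \in \T^3} |\rho(t, x) - \rho(0, x)| \leq \tfrac{c_0}{2},
\qquad \forall\, t \in [0, T_0].
\end{align*}
Combining this with $\rho(0, x) \geq c_0$ yields the bound $\rho(t,x) \geq c_0/2$ on $[0, T_0] \times \T^3$.

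I do not expect any significant obstacle here; the argument is essentially a reduction to Lemma~\ref{lem:kernels}-style reasoning, where the polynomially weighted continuity plus the tail decay replaces compactness in $v$. The only point requiring a little care is making sure the $L^\infty$ bound used to control the tail is indeed uniform in $(t, x)$, which is given directly by the assumption $\|\vint{v}^{k_0} f\|_{L^\infty_{t,x}L^\infty_v} < \infty$.
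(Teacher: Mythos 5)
Your proof is correct and is essentially the paper's argument made explicit: the paper invokes the Lebesgue Dominated Convergence theorem (using that $\vint{v}^4 f$ is continuous and that $\vint{v}^{-4}$ is integrable) to get continuity of $\rho$ on the compact set $[0,T_\ast]\times\T^3$, and hence uniform continuity, while you carry out the equivalent $\varepsilon/2$ split into a compact ball and a tail controlled by the $k_0 > 3$ decay. The lower bound from uniform continuity in time is handled identically.
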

\begin{proof}
The uniform continuity of $\rho$ on $[0, T] \times \T^3$ follows from the Lebesgue Dominated Convergence theorem, given that $\vint{v}^4 f \in C([0, T_\ast] \times \T^3 \times \R^3)$. The bound~\eqref{bound:rho-lower} follows directly from continuity. 
\end{proof}

% \begin{remark}
% The continuity of $\rho$ can follow from the continuity of $f$ and the boundedness  of $\vint{v}^4 f$ in $L^\infty$ by the Lebesgue dominated convergence theorem.  
% \end{remark}

A few words are in order to illustrate the main ideas. First, ideally, we want to commute the $\CalM$ operator with $\rho \vint{v}^\beta$. However, this commutation requires high regularity of $\rho$ in $x$ which is not known a priori. To overcome this difficulty, we decompose $\rho$ into its regular and singular parts through convolution. 

Second, equation~\eqref{trasp_comm} shows that the commutator $\CalM$ with the transport operator introduces a backward diffusion with no weights in $v$. Hence, in the case of soft potentials where $\beta < 0$, the regularity gained through diffusion is not sufficient to control the transport commutator. To address this, we incorporate the weight into the $\CalM$ operator through the dyadic decomposition. It turns out that such a ``weighted" $\CalM$ is also needed in the hard potential case where $\beta > 0$. 

Let us now state the uniqueness theorem.
\begin{theorem}  \label{thm:unique-w-toy}
Suppose $\beta \leq 2$ and $f, g$ are two continuous solutions to~\eqref{eq:toy-original} satisfying the conditions in Lemma~\ref{lem:toy-space}. 
%with the initial data $f_0$ satisfying~\eqref{cond:toy-initial} and $\beta \leq 2$. 
Then $f = g$. 
\end{theorem}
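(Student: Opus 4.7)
The plan is to apply the $\CalM$-operator, rescaled dyadically in velocity, to the difference $w := f - g$ and derive an energy-type inequality in a negative-Sobolev norm of $w$. The equation for $w$ is
\begin{align*}
\partial_t w + v\cdot \nabla_x w = \nabla_v \cdot \bigl(\rho_f\,\vint{v}^\beta \nabla_v w\bigr) + \nabla_v \cdot \bigl(\rho_w\,\vint{v}^\beta \nabla_v g\bigr),\qquad w|_{t=0} = 0.
\end{align*}
The final term is the obstruction: it involves $\nabla_v g$, which is \emph{not} a priori bounded. The whole point of the $\CalM$ approach is to push $v$-derivatives off of $g$ and onto $\CalM w$, at the price of commutators controlled via the structure of $\CalM$.

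First I would prepare the coefficients. From Lemma~\ref{lem:toy-space}, $\rho_f\geq c_0/2$ and $\rho_f$ is uniformly continuous on $[0,T_0]\times\T^3$. Using a positive summability kernel (Lemma~\ref{lem:kernels}(a)), split $\rho_f = \rho_{f,a} + r_{f,a}$ with $\rho_{f,a}$ smooth in $x$ (in $H^5_x$ with an $a$-dependent bound) and $\|r_{f,a}\|_{L^\infty_{t,x}}$ as small as wished. Regularize $\vint{v}^\beta \nabla_v g$ similarly via Lemma~\ref{lem:kernels}(b). The smooth parts are amenable to commutation with $\CalM$ through Lemmas~\ref{lem:M-comm-x}--\ref{lem:M-comm-x-v}, while the small singular remainders will be absorbed using their $L^\infty$-smallness.

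The central construction is to combine $\CalM$ with the dyadic partition $\{\theta_k\}$ from Lemma~\ref{lem:dyadic}. On the support of $\theta_k$ one has $\vint{v}^\beta \asymp 2^{k\beta}$, so the forward diffusion provides coercivity of strength $\sim c_0\,2^{k\beta}$ on the $k$-th shell. However, \eqref{trasp_comm} shows that $[\CalM,\partial_t+v\cdot\nabla_x]$ produces a \emph{backward} $v$-diffusion of strength $\delta$ with no velocity weight, creating a weight mismatch for $\beta<0$ (and a symmetric issue at small $|v|$ when $\beta>0$). The remedy is to use a family of operators $\CalM_k$ obtained from \eqref{def:M} by replacing $\delta$ with a shell-adapted $\delta_k$ scaling like $\delta\cdot 2^{k\beta}$, so that on the $k$-th shell the backward commutator is dominated by the forward diffusion. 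Since $\theta_k$ depends only on $v$, it commutes with $v\cdot\nabla_x$, and $\CalM_k$ can be commuted past $\theta_k$, $\rho_{f,a}$ and $\vint{v}^\beta$ via Lemmas~\ref{lem:M-comm-x}--\ref{lem:M-comm-x-v}, each producing lower-order error terms (bearing a factor $T$, $2^{-k}$, or a gain of one derivative).

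Setting $u_k := \CalM_k(\theta_k w)$, I would then test the resulting PDE against $u_k$ in $L^2_{x,v}$. The transport term is conservative; the transport commutator contributes $\delta_k\!\int|\xi|^2|M_k\widehat{\theta_k w}|^2$-type terms, absorbed by the forward diffusion after using \eqref{int_xi_M2}--\eqref{lapl}. The main obstacle is the bad term
\begin{align*}
\int \CalM_k\bigl(\theta_k\,\nabla_v\!\cdot\!(\rho_w\vint{v}^\beta\nabla_v g)\bigr)\,u_k\dx\dv,
\end{align*}
which I would integrate by parts twice in $v$, transferring both derivatives onto $\CalM_k$-based quantities and leaving $g$ itself (bounded, not differentiated); the fact that $\rho_w$ is $v$-independent kills the stray $\nabla_v\rho_w$ that would otherwise appear. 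The factor $\|\rho_w(T,\cdot)\|_{L^\infty_x}$ is then recovered from $w$ via the Sobolev embedding $H^s_x\hookrightarrow L^\infty_x$ for $s>3/2$, and the required $x$-regularity comes precisely from the time-averaged estimate \eqref{lapl}: integrating the inequality in $T\in[0,T_0]$ converts $\int_0^T\!\int |\xi|^2|M_k\widehat{\theta_k w}|^2\dxi\dt$ into an $L^2_{t,x,v}$ norm of $\theta_k w$ with constant $2/(\Eps\delta_k)$. Summing in $k$ using $\sum_k\theta_k^2\geq 1/3$ from Lemma~\ref{lem:dyadic}, choosing first $a$, then $\delta$, then $T_0$ small enough, and invoking Gronwall yields $w\equiv 0$ on $[0,T_0]$; iteration in $t$ extends this to $[0,T_\ast]$. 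The delicate bookkeeping — matching $\delta_k$ to $2^{k\beta}$ uniformly in $k$, tracking the $2^{-k}$ loss in $\nabla_v\theta_k$, and keeping every commutator constant from Lemmas~\ref{lem:M-comm-x}--\ref{lem:M-comm-v} independent of $\delta_k$ — is the main obstacle I anticipate.
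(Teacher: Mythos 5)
Your overall architecture matches the paper's (dyadic decomposition in $v$, shell-adapted $\delta_k\sim\delta 2^{k\beta}$ in $\CalM_k$, regularization of coefficients via Lemma~\ref{lem:kernels}, energy estimate on $\CalM_k(\theta_k\,\cdot)$, absorption of the backward-diffusion commutator by the forward diffusion, then smallness of $a$, $\delta$, $T_0$). But there is a genuine gap in how you propose to control $\rho_w = \int_{\R^3}(f-g)\dv$.

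The issue is two-fold. First, you set $u_k := \CalM_k(\theta_k w)$ with $w := f-g$ \emph{unweighted}. With that choice, the Gronwall loop does not close: your energy estimate produces (on the left) $\sum_k\|\theta_k w\|_{L^2_{t,x,v}}^2$, but the coefficient $\rho_w$ in the bad source term is $\int_{\R^3} w\dv$, which cannot be bounded by $\|w\|_{L^2_v}$ without polynomial decay in $v$. The paper resolves this by building a polynomial weight into the definition from the start: $w_n := \theta_n\vint{v}^m(f-g)$ with $m>3$, so that the density of the \emph{unweighted} difference $h=f-g$ satisfies $|\rho_h| = |\int \vint{v}^{-m}(\vint{v}^m h)\dv| \le \|\vint{v}^{-m}\|_{L^2_v}\,\|\vint{v}^m h\|_{L^2_v}$ by Cauchy--Schwarz, and is therefore controlled by exactly the quantity you are estimating. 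Without that weight the argument breaks.

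Second, your proposed fix --- ``$\|\rho_w(T,\cdot)\|_{L^\infty_x}$ is recovered from $w$ via the Sobolev embedding $H^s_x\hookrightarrow L^\infty_x$, with the $x$-regularity supplied by~\eqref{lapl}'' --- does not work. In the convention used throughout the paper (see the proof of Lemma~\ref{lem:M-basic}, where the transport term Fourier-transforms to $-\eta\cdot\nabla_\xi\widehat h$), the variable $\xi$ is dual to $v$ and $\eta$ is dual to $x$. Hence~\eqref{int_xi_M2}--\eqref{lapl} control $\int|\xi|^2|M\widehat h|^2$, i.e.\ $\|\nabla_v(\CalM h)\|_{L^2}^2$: this is regularization in the \emph{velocity} variable, not in $x$. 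There is no $H^s_x$ gain available here, and indeed the paper nowhere uses $L^\infty_x$ control on $\rho_h$; it uses $\|\rho_h\|_{L^2_{t,x}}\lesssim\|w\|_{L^2_{t,x,v}}$, which is precisely what the $\vint{v}^m$ weight buys. You need to incorporate that weight (and then track the resulting extra commutators $\nabla_v(\theta_n\vint{v}^m)$, which is what the terms $J_4$--$J_6$ in the paper handle) for the proof to close.
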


\begin{proof}
First, we use the dyadic decomposition to localize the velocity variable. Let $\theta_n(v)$ be the partition of unity in Lemma~\ref{lem:dyadic}. Let
\begin{align} \label{def:differences}
 h = f - g, 
\qquad 
 w_n = \theta_n\vint{v}^m (f - g), 
\quad
 3 < m \leq k_0,
\quad 
 m + \beta \leq k_0.
\end{align}
Then $\text{Supp}(w_n) \subseteq B(0, 2^{n+2})\setminus \overline{B(0, 2^n)}$ and $w_n$ satisfies
\begin{align} \label{eq:w-toy}
  \del_t w_n + v \cdot \nabla_x w_n 
&= \nabla_v \cdot \vpran{\rho_g \vint{v}^\beta \nabla_v w_n}
 + \nabla_v \cdot \vpran{\rho_h \theta_n \vint{v}^{m+\beta} \nabla_v f} \nn
\\
& \quad \,
 - \nabla_v \vpran{\theta_n \vint{v}^m}    \cdot \rho_h \vint{v}^\beta
   \nabla_v f
 - \nabla_v\vpran{\theta_n \vint{v}^m}    \cdot \rho_g \vint{v}^\beta
   \nabla_v h \nn
\\
& \quad \,
  - \nabla_v \cdot 
    \vpran{\rho_g \vint{v}^\beta
    \nabla_v\vpran{\theta_n \vint{v}^m} h},
\\
 w_n|_{t=0} & = 0,  \nn
\end{align}
where $\rho_w(t, x) =  \int_{\R^3} w(t, x, v) \dv$. Let $\phi_a$ be a non-negative kernel in Lemma~\ref{lem:kernels} (a) and decompose $\rho_g$ into its regular and singular parts
\begin{align*}
 \rho_g = \rho_g \ast \phi_a + (\rho_g - \rho_g \ast \phi_a). 
\end{align*}
Rewrite equation~\eqref{eq:w-toy} accordingly: 
\begin{align} \label{eq:w-toy-reg}
  \del_t w_n + v \cdot \nabla_x w_n 
&= \nabla_v \cdot \vpran{\vpran{\rho_g \ast \phi_a} \vint{v}^\beta \nabla_v w_n}
  + \nabla_v \cdot \vpran{\vpran{\rho_g - \rho_g \ast \phi_a} \vint{v}^\beta \nabla_v w_n} \nn
\\
& \quad \,
 + \nabla_v \cdot \vpran{\rho_h \theta_n \vint{v}^{m+\beta} \nabla_v f}
 - \nabla_v \cdot 
    \vpran{\rho_g \vint{v}^\beta
    \nabla_v\vpran{\theta_n \vint{v}^m} h} 
\\
& \quad \,
 - \nabla_v \vpran{\theta_n \vint{ v}^m}    \cdot \rho_h \vint{v}^\beta
   \nabla_v f
 - \nabla_v\vpran{\theta_n \vint{v}^m}    \cdot \rho_g \vint{v}^\beta
   \nabla_v h := \sum_{i=1}^6 J_i, \nn
\end{align}
where $a > 0$ is chosen to be small enough such that 
\begin{align} \label{cond:ellip}
 \rho_a \ast \phi_a > c_0/4 > 0.
\end{align}
This is possible due to the uniform convergence in Lemma~\ref{lem:kernels} (a). 

Instead of using the generic $\CalM$-operator, we define the weighted version to accommodate the extra weight $\vint{v}^\beta$: for each $n \in \N \cup \{0\}$, define
\begin{align*}
  M_n
:= \frac{1}{1 + \delta 2^{\beta n} \int_0^{T-t} \langle \xi - \tau \eta\rangle^{2} \;d\tau}.
\end{align*}
Denote $M_n$ as the symbol of the operator $\CalM_n$ so that
\begin{align*}
 \CalF\vpran{\CalM_n w_n}
=  M_n \widehat{w_n}.
\end{align*}
Apply the $\CalM_n$-operator to~\eqref{eq:w-toy-reg} and we obtain 
\begin{align*}
  \del_t \CalM_n w_n + v \cdot \nabla_x \CalM_n w_n
= \sum_{i=1}^6 \CalM_n J_i + R_1,
\end{align*}
where $R_1$ is the transport commutator defined in~\eqref{trasp_comm}
\begin{align} \label{def:R-1}
\widehat{R_1} = \CalF\vpran{[\CalM_n, \del_t + v \cdot \nabla_x] w_n}
= \delta 2^{\beta n} \tfrac{\langle \xi \rangle^{2}}{1 + \delta 2^{\beta n} \int_t ^T \langle \xi +(t- \tau) \eta\rangle^{2} \;d\tau} \CalF\vpran{\CalM_n w_n}.
\end{align}
Perform an energy estimate for each $\CalM_n w_n$. We have that
\begin{align*}
   \int_0^{T_0} \int_0^T \iint_{\T^3 \times \R^3}
   \CalM_n w_n \, \del_t \CalM_n w_n
= \frac{1}{2} \int_0^{T_0}      
   \iint_{\T^3 \times \R^3}
|w_n|^2 \dx\dv\dt,
\end{align*}
\begin{align*}
 \int_0^{T_0} \int_0^T \iint_{\T^3 \times \R^3}
   \CalM_n w_n \, \vpran{v \cdot \nabla_x \CalM_n w_n} = 0.
\end{align*}
Summing over $n \in \N$ and applying the last inequality in~\eqref{cond:theta-k-1}, we have
\begin{align} \label{est:LHS}
\sum_{n=0}^\infty
\int_0^{T_0} \int_0^T \iint_{\T^3 \times \R^3}
   \CalM_n w_n \, \del_t \CalM_n w_n
\geq \frac{1}{6}
  \norm{w}_{L^2_{t,x,v}}^2.
\end{align}
Now we estimate the contribution to the energy estimate of each $J_i$, $i = 1, 2, \cdots, 6$. For the ease of notation, we name the contribution of $R_1$ and each $J_i$ in the energy estimate as $ER_1$ and $EJ_i$ respectively. An important observation is that the bounding coefficients $C$ in Lemmas~\ref{lem:M-comm-x}-\ref{lem:M-comm-x-v} are independent of $\delta$. Thus when these lemmas are applied to $\CalM_n$, the resulting coefficients in the upper bounds are independent of $n. $

\smallskip

\Ni \underline{Estimate of $EJ_1$:} The $EJ_1$-term for each $n$ has the form
\begin{align*}
& \quad \,
 \int_0^{T_0} \int_0^T \iint_{\T^3 \times \R^3}
   \vpran{\CalM_n w_n} \,
\CalM_n \nabla_v \cdot \vpran{\vpran{\rho_g \ast \phi_a} \vint{v}^\beta \nabla_v w_n}
\\
& = - \int_0^{T_0} \int_0^T \iint_{\T^3 \times \R^3}
   \vpran{\nabla_v \CalM_n w_n} \cdot
\CalM_n \vpran{\vpran{\rho_g \ast \phi_a} \vint{v}^\beta \nabla_v w_n}. 
\end{align*}
Commutating $\CalM_n$ with $\rho_g \ast \phi_a$, we have
\begin{align*}
  \CalM_n \vpran{\vpran{\rho_g \ast \phi_a} \vint{v}^\beta \nabla_v w_n}
&= \vpran{\rho_g \ast \phi_a} \vint{v}^\beta \CalM_n \nabla_v w_n
+ [\CalM_n, \,\, \rho_g \ast \phi_a]
  \vint{v}^\beta \nabla_v w_n
\\
& \quad \,
+ \vpran{\rho_g \ast \phi_a} [\CalM_n, \,\, \vint{v}^\beta] \nabla_v w_n. 
\end{align*}
%By Lemma~\ref{lem:M-comm-x-v} 
By Lemma~\ref{lem:M-comm-x}, we have 
\begin{align*}
 \norm{[\CalM_n, \,\, \rho_g \ast \phi_a]
  \vint{v}^\beta \nabla_v w_n}_{L^2_{x, v}}
&\leq
 C T \norm{\rho_g \ast \phi_a}_{H^5}
 \norm{\CalM_n \vpran{\vint{v}^\beta \nabla_v w_n}}_{L^2_{x, v}}
\\
&\leq
 \frac{C T}{a^5} \norm{\rho_g}_{L^\infty} 
 \norm{\CalM_n \vpran{\vint{v}^\beta \nabla_v w_n}}_{L^2_{x, v}}, 
\end{align*}
where by Lemma~\ref{lem:M-comm-v} and by the support of $w_n$,
\begin{align*}
 \norm{\CalM_n \vpran{\vint{v}^\beta \nabla_v w_n}}_{L^2_{x, v}}
&\leq 
 \norm{\vint{v}^\beta \CalM_n \vpran{\nabla_v w_n}}_{L^2_{x, v}}
+  \norm{[\CalM_n, \,\, \vint{v}^\beta] \nabla_v w_n}_{L^2_{x, v}}
\\
&\leq 
 C\, 2^{\beta n} 
 \norm{\CalM_n \vpran{\nabla_v w_n}}_{L^2_{x, v}}\,.
% + C \, 2^{\beta N} \norm{w_n}_{L^2_{x, v}} .
\end{align*}
Therefore,
\begin{align} \label{est:toy-comm-J-1-1}
 \norm{[\CalM_n, \,\, \rho_g \ast \phi_a]
  \vint{v}^\beta \nabla_v w_n}_{L^2_{x, v}}
\leq
 2^{\beta n} 
 \frac{C T}{a^5} \norm{\rho_g}_{L^\infty} 
 \norm{\CalM_n \vpran{\nabla_v w_n}}_{L^2_{x, v}}.
% + \norm{w_n}_{L^2_{x, v}}}.
\end{align}
Similarly, by Lemma~\ref{lem:M-comm-v}, 
\begin{align} \label{est:toy-comm-J-1-2}
 \norm{\vpran{\rho_g \ast \phi_a} [\CalM_n, \,\, \vint{v}^\beta] \nabla_v w_n}_{L^2_{x,v}}
&\leq 
 \norm{\vpran{\rho_g \ast \phi_a}}_{L^\infty}
 \norm{[\CalM_n, \,\, \vint{v}^\beta] \nabla_v w_n}_{L^2_{x,v}}
\\
&\leq
 C 2^{\frac{\beta n}{2}} \norm{\rho_g}_{L^\infty}
 \norm{\vint{v}^{\frac{\beta}{2}-1}w_n}_{L^2_{x,v}}
\leq
  C 2^{\frac{\beta n}{2}} \norm{\rho_g}_{L^\infty}
\norm{w_n}_{L^2_{x,v}}, \nn
\end{align}
where the last inequality is one place where the technical assumption of $\beta \leq 2$ is used. The factor $2^{\frac{\beta n}{2}}$ will later be combined with the other term $\nabla_v \CalM_n w_n$.

Estimates~\eqref{est:toy-comm-J-1-1} and~\eqref{est:toy-comm-J-1-2} give
\begin{align} \label{est:toy-J-1}
& \quad \,
 - \int_0^{T_0} \int_0^T \iint_{\T^3 \times \R^3}
   \vpran{\nabla_v \CalM_n w_n} \cdot
\CalM_n \vpran{\vpran{\rho_g \ast \phi_a} \vint{v}^\beta \nabla_v w_n} \nn
\\
& \leq 
 - 4^{-\min(\beta, 0)} \frac{c_0}{8} 2^{\beta n}
   \int_0^{T_0} \int_0^T \iint_{\T^3 \times \R^3}
   \abs{\nabla_v \CalM_n w_n}^2 \dv \dx \dt \dT \nn
\\
& \quad \,
 + 2^{\beta n} 
 \frac{C T}{a^5} \norm{\CalM_n \vpran{\nabla_v w_n}}_{L^2_{x, v}}^2
 + C T \norm{\rho_g}_{L^\infty}^2
 \norm{w_n}^2_{L^2_{x,v}} \nn
\\
& \leq 
 \vpran{- 4^{-\min(\beta, 0)} \frac{c_0}{8} 
 + \frac{C T_0}{a^5}}
   \int_0^{T_0} \int_0^T \iint_{\T^3 \times \R^3}
   \abs{2^{\frac{\beta n}{2}}\nabla_v \CalM_n w_n}^2 \dv \dx \dt \dT \nn
\\
& \quad \,
+ C T_0 \norm{\rho_g}_{L^\infty}^2
 \norm{w_n}^2_{L^2_{t,x,v}}, 
\end{align}
by taking $T_0$ small enough. The size of $T_0$ depends on $\norm{\rho_g}_{L^\infty([0, T_\ast] \times \T^3)}$. Summing over $n \in \N$, we have
\begin{align} \label{est:EJ-1}
 EJ_1
\leq
\vpran{- 4^{-\min(\beta, 0)} \frac{c_0}{8}
+ \frac{C T}{a^5}} \sum_{n=0}^\infty
 \norm{2^{\frac{\beta n}{2}} \nabla_v \CalM_n w_n}_{L^2_{T,t,x,v}}^2
+ C T \norm{\rho_g}_{L^\infty}
 \norm{w}^2_{L^2_{t,x,v}}. 
\end{align}

\smallskip

\Ni \underline{Estimate of $EJ_2$:} The $EJ_2$-term for each $n$ has the form
\begin{align*}
& \quad \,
 \int_0^{T_0} \int_0^T \iint_{\T^3 \times \R^3}
   \vpran{\CalM_n w_n} \,
\CalM_n \nabla_v \cdot \vpran{\vpran{\rho_g - \rho_g \ast \phi_a} \vint{v}^\beta \nabla_v w_n}
\\
& = - \int_0^{T_0} \int_0^T \iint_{\T^3 \times \R^3}
   \vpran{\nabla_v\CalM_n w_n} \,
\cdot \nabla_v \CalM_n \vpran{\vpran{\rho_g - \rho_g \ast \phi_a} \vint{v}^\beta w_n}
\\
& \quad \,
+ \int_0^{T_0} \int_0^T \iint_{\T^3 \times \R^3}
   \vpran{\nabla_v\CalM_n w_n} \,
\cdot \CalM_n \vpran{\vpran{\rho_g - \rho_g \ast \phi_a} \vpran{\nabla_v\vint{v}^\beta} w_n}
=: J_{2, 1} + J_{2,2}.
\end{align*}
By Lemma~\ref{lem:M-basic}, 
\begin{align*}
 \abs{J_{2, 1}}
& \leq 
 \norm{\rho_g - \rho_g \ast \phi_a}_{L^\infty_{t,x}}
 \norm{2^{\frac{\beta n}{2}}\nabla_v \CalM_n w_n}_{L^2_{T,t,x,v}}
 \norm{\nabla_v \CalM_n \vpran{2^{-\frac{\beta n}{2}}\vint{v}^\beta w_n}}_{L^2_{T,t,x,v}}
\\
& \leq 
 \frac{C}{\sqrt{\delta 2^{\beta n}}}\norm{\rho_g - \rho_g \ast \phi_a}_{L^\infty_{t,x}}
 \norm{2^{\frac{\beta n}{2}}\nabla_v \CalM_n w_n}_{L^2_{T,t,x,v}} 
 \norm{2^{-\frac{\beta n}{2}}\vint{v}^\beta w_n}_{L^2_{t,x,v}}
\\
& \leq 
 \frac{C}{\sqrt{\delta}}\norm{\rho_g - \rho_g \ast \phi_a}_{L^\infty_{t,x}}
 \norm{2^{\frac{\beta n}{2}}\nabla_v \CalM_n w_n}_{L^2_{T,t,x,v}} 
 \norm{w_n}_{L^2_{t,x,v}}. 
\end{align*}
Note that the last step is one reason why $\CalM_n$ is designed by replacing $\delta$ with the weighted version of $\delta_n$. Similarly, 
\begin{align*}
 \abs{J_{2,2}}
&\leq
 C \norm{\rho_g - \rho_g \ast \phi_a}_{L^\infty_{t,x}}
 \norm{2^{\frac{\beta n}{2}}\nabla_v \CalM_n w_n}_{L^2_{T,t,x,v}}
 \norm{\vint{v}^{\frac{\beta}{2}-1} w_n}_{L^2_{T,t,x,v}}
\\
&\leq
 C \norm{\rho_g - \rho_g \ast \phi_a}_{L^\infty_{t,x}}
 \norm{2^{\frac{\beta n}{2}}\nabla_v \CalM_n w_n}_{L^2_{T,t,x,v}}
 \norm{w_n}_{L^2_{t,x,v}} 
\end{align*}
for $\beta \leq 2$. Therefore, the estimate related to $J_2$ is
\begin{align} \label{est:J-2}
& \quad \,
 \int_0^{T_0} \int_0^T \iint_{\T^3 \times \R^3}
   \vpran{\CalM_n w_n} \,
\CalM_n \nabla_v \cdot \vpran{\vpran{\rho_g - \rho_g \ast \phi_a} \vint{v}^\beta \nabla_v w_n} \nn
\\
& \leq 
  \frac{C}{\sqrt{\delta}}\norm{\rho_g - \rho_g \ast \phi_a}_{L^\infty_{t,x}}
 \norm{2^{\frac{\beta n}{2}}\nabla_v \CalM_n w_n}_{L^2_{T,t,x,v}}
 \norm{w_n}_{L^2_{t,x,v}},
\qquad
 0 < \delta < 1.
\end{align}
Denote $\Eps_a
:= \norm{\rho_g - \rho_g \ast \phi_a}_{L^\infty_{t,x}}$.
Then
\begin{align*}
 \Eps_a \to 0
\quad \text{as} \,\,
a \to 0.
\end{align*}
Summing over $n \in \N$, we have
\begin{align} \label{est:EJ-2}
 EJ_2
\leq 
 \Eps_a \sum_{n=0}^\infty
 \norm{2^{\frac{\beta n}{2}} \nabla_v \CalM_n w_n}_{L^2_{T,t,x,v}}^2
+ \frac{C \Eps_a}{\delta}
  \norm{w}_{L^2_{t,x,v}}^2.
\end{align}

%\smallskip

\Ni \underline{Estimate of $EJ_3$:} The $EJ_3$-term for each $n$ has the form
\begin{align*}
& \quad \,
 \int_0^{T_0} \int_0^T \iint_{\T^3 \times \R^3}
   \vpran{\CalM_n w_n} \,
   \CalM_n \nabla_v \cdot \vpran{\rho_h \theta_n \vint{v}^{m+\beta} \nabla_v f}
\\
& = - \int_0^{T_0} \int_0^T \iint_{\T^3 \times \R^3}
   \vpran{\nabla_v \CalM_n w_n} \, \cdot
   \CalM_n \vpran{\rho_h \theta_n \vint{v}^{m+\beta} \nabla_v \vpran{f \ast \psi_a}}
\\
& \quad \,
  - \int_0^{T_0} \int_0^T \iint_{\T^3 \times \R^3}
   \vpran{\nabla_v \CalM_n w_n} \, \cdot
   \CalM_n \vpran{\rho_h \theta_n \vint{v}^{m+\beta} \nabla_v \vpran{f - f \ast \psi_a}}
= : J_{3,1} + J_{3,2},
\end{align*}
where $\psi_a$ is the mollifier in $x, v$. By Lemma~\ref{lem:M-basic}, 
\begin{align*}
 \abs{J_{3,1}}
&\leq
 C \norm{\rho_h}_{L^2_{t,x}}
 \norm{2^{\frac{\beta n}{2}}\nabla_v \CalM_n w_n}_{L^2_{T,t,x,v}}
 \norm{2^{-\frac{\beta n}{2}} \theta_n \vint{v}^{m+\beta} \vpran{f \ast \nabla_v\psi_a}}_{L^\infty_{T,t,x}L^2_v}
\\
& \leq
 C \sqrt{T}
 \norm{\rho_h}_{L^2_{t,x}}
 \norm{2^{\frac{\beta n}{2}}\nabla_v \CalM_n w_n}_{L^2_{T,t,x,v}}
 \norm{\theta_n \vint{v}^{m+\frac{\beta}{2}} \vpran{f \ast \nabla_v\psi_a}}_{L^\infty_{t,x}(L^\infty_v \cap L^1_v)}
\\
& \leq
 C \sqrt{T}
 \norm{w}_{L^2_{t,x}}
 \norm{2^{\frac{\beta n}{2}}\nabla_v \CalM_n w_n}_{L^2_{T,t,x,v}}
 \norm{\theta_n \vint{v}^{m+\frac{\beta}{2}} \vpran{f \ast \nabla_v\psi_a}}_{L^\infty_{t,x}(L^\infty_v \cap L^1_v)},
\end{align*}
where the last step holds if $m > 3$. 
The term $J_{3,2}$ is bounded as follows
\begin{align*}
 \abs{J_{3,2}}
& \leq
\abs{\int_0^{T_0} \int_0^T \iint_{\T^3 \times \R^3}
   \vpran{\nabla_v \CalM_n w_n} \, \cdot
   \nabla_v \CalM_n \vpran{\rho_h \theta_n \vint{v}^{m+\beta} \vpran{f - f \ast \psi_a}}}
\\
& \quad \,
 + \abs{\int_0^{T_0} \int_0^T \iint_{\T^3 \times \R^3}
   \vpran{\nabla_v \CalM_n w_n} \, \cdot
   \CalM_n \vpran{\rho_h \nabla_v \vpran{\theta_n \vint{v}^{m+\beta}} \vpran{f - f \ast \psi_a}}}
=: J^{(1)}_{3,2} + J^{(2)}_{3,2},
\end{align*}
where
\begin{align*}
 J^{(1)}_{3,2}
& \leq
 2^{-\frac{\beta n} {2}}
 \norm{2^{\frac{\beta n}{2}}\nabla_v \CalM_n w_n}_{L^2_{T,t,x,v}}
 \norm{\nabla_v \CalM_n \vpran{\rho_h \theta_n
 \vint{v}^{m+\beta} \vpran{f - f \ast \psi_a}}}_{L^2_{T,t,x,v}}
\\
& \leq
 \frac{C 2^{-\frac{3\beta n}{2}}}{\sqrt{\delta}}
 \norm{\rho_h}_{L^2_{t,x}}
 \norm{2^{\frac{\beta n}{2}}\nabla_v \CalM_n w_n}_{L^2_{T,t,x,v}} 
 \norm{ \theta_n
 \vint{v}^{m+\beta} \vpran{f - f \ast \psi_a}}_{L^\infty_{t,x}L^2_v}
\\
& \leq
 \frac{C}{\sqrt{\delta}}
 \norm{\rho_h}_{L^2_{t,x}}
 \norm{2^{\frac{\beta n}{2}}\nabla_v \CalM_n w_n}_{L^2_{T,t,x,v}} 
 \norm{ \theta_n
 \vint{v}^{m-\frac{\beta}{2}} \vpran{f - f \ast \psi_a}}_{L^\infty_{t,x}L^2_v}
\\
& \leq
  \frac{C}{\sqrt{\delta}}
 \norm{w}_{L^2_{t,x}}
 \norm{2^{\frac{\beta n}{2}}\nabla_v \CalM_n w_n}_{L^2_{T,t,x,v}} 
 \norm{\theta_n
 \vint{v}^{m-\frac{\beta}{2}} \vpran{f - f \ast \psi_a}}_{L^\infty_{t,x}(L^\infty_v \cap L^1_v)},
\end{align*}
if $m > 3$. Moreover, for the same $m$,
\begin{align*}
 J^{(2)}_{3,2}
&\leq
 2^{-\frac{\beta n} {2}}
 \norm{2^{\frac{\beta n}{2}}\CalM_n w_n}_{L^2_{T,t,x,v}}
 \norm{\CalM_n \vpran{\rho_h \nabla_v \vpran{\theta_n
 \vint{v}^{m+\beta}} \vpran{f - f \ast \psi_a}}}_{L^2_{T,t,x,v}}
\\
& \leq
  C \sqrt{T} \norm{\rho_h}_{L^2_{t,x}}
  2^{-\frac{\beta n} {2}}
 \norm{2^{\frac{\beta n}{2}}\CalM_n w_n}_{L^2_{T,t,x,v}}
 \norm{\nabla_v \vpran{\theta_n
 \vint{v}^{m+\beta}} \vpran{f - f \ast \psi_a}}_{L^\infty_{t,x}L^2_v}
\\
& \leq 
 C \norm{w}_{L^2_{t,x}}
 \norm{2^{\frac{\beta n}{2}}\CalM_n w_n}_{L^2_{T,t,x,v}}
 \norm{\nabla_v \vpran{\theta_n
 \vint{v}^{m+\frac{\beta}{2}}} \vpran{f - f \ast \psi_a}}_{L^\infty_{t,x}(L^\infty_v \cap L^1_v)}. 
\end{align*}
Altogether, the estimate related to $J_3$ is
\begin{align}
& \quad \,
 \int_0^{T_0} \int_0^T \iint_{\T^3 \times \R^3}
  \vpran{\CalM_n w_n} \,
   \CalM_n \nabla_v \cdot \vpran{\rho_h \theta_n \vint{v}^{m+\beta} \nabla_v f}  \nn
\\
& \leq
 C \norm{w}_{L^2_{t,x}}
 \norm{2^{\frac{\beta n}{2}}\CalM_n w_n}_{L^2_{T,t,x,v}} \times \nn
\\
& \quad \, \times
\Big(\sqrt{T} \norm{\theta_n \vint{v}^{m+\frac{\beta}{2}} \vpran{f \ast \nabla_v\psi_a}}_{L^\infty_{t,x}(L^\infty_v \cap L^1_v) } + \frac{1}{\sqrt{\delta}}\norm{\theta_n
 \vint{v}^{m-\frac{\beta}{2}} \vpran{f - f \ast \psi_a}}_{L^\infty_{t,x}(L^\infty_v \cap L^1_v)} \nn
\\
& \hspace{2cm}
+ \norm{\nabla_v \vpran{\theta_n
 \vint{v}^{m+\frac{\beta}{2}}} \vpran{f - f \ast \psi_a}}_{L^\infty_{t,x}(L^\infty_v \cap L^1_v)}
 \Big). 
\end{align}
Instead of introducing a new notation, we re-define $\Eps_a$ as
\begin{align*}
\Eps_a
= \max\left\{\norm{\rho_g - \rho_g \ast \phi_a}_{L^\infty_{t,x}}, \,\,
\norm{\vint{v}^{m+\frac{|\beta|}{2}} \vpran{f - f \ast \psi_a}}_{L^\infty_{t,x}(L^\infty_v \cap L^1_v)} \right\}. 
\end{align*}
Then by the Lebesgue Dominated Convergence theorem, $\Eps_a \to 0$ as $a \to 0$ for $m+\frac{|\beta|}{2} + 3 < k_0$.
Summing over $n \in \N$, we have
\begin{align*}
 \sum_{n=0}^\infty
\norm{\theta_n \vint{v}^{m+\beta} \vpran{f \ast \nabla_v\psi_a}}_{L^\infty_{t,x,v}}^2
&\leq 
 \vpran{\sum_{n=0}^\infty \theta_n}
\norm{\vint{v}^{m+\frac{\beta}{2}} \vpran{f \ast \nabla_v\psi_a}}_{L^\infty_{t,x}(L^\infty_v \cap L^1_v)}
\\
& \hspace{-2cm}
= \norm{\vint{v}^{m+\frac{\beta}{2}} \vpran{f \ast \nabla_v\psi_a}}_{L^\infty_{t,x}(L^\infty_v \cap L^1_v)}
\leq
\frac{C}{a} \norm{\vint{v}^{m+\frac{\beta}{2}} f}_{L^\infty_{t,x}(L^\infty_v \cap L^1_v)}.
\end{align*}
Similarly, 
\begin{align*}
 \sum_{n=0}^\infty
 \norm{\theta_n
 \vint{v}^{m-\frac{\beta}{2}} \vpran{f - f \ast \psi_a}}_{L^\infty_{t,x}(L^\infty_v \cap L^1_v)}^2
\leq
 C \norm{\vint{v}^{m+|\beta|} \vpran{f - f \ast \psi_a}}_{L^\infty_{t,x}(L^\infty_v \cap L^1_v)}^2
\leq C \Eps_a,
\end{align*}
and by Lemma~\ref{lem:dyadic}, 
\begin{align*}
& \quad \,
 \sum_{n=0}^\infty
\norm{\nabla_v \vpran{\theta_n
 \vint{v}^{m+\frac{\beta}{2}}} \vpran{f - f \ast \psi_a}}_{L^\infty_{t,x}(L^\infty_v \cap L^1_v)}^2
\\
& \leq
 C \sum_{n=0}^\infty
\norm{\vpran{\theta_n + |\nabla_v \theta_n|}
 \vint{v}^{m+\frac{\beta}{2}} \vpran{f - f \ast \psi_a}}_{L^\infty_{t,x}(L^\infty_v \cap L^1_v)}^2
\\
& \leq
 C \sum_{n=0}^\infty
\norm{\vpran{\theta_n + 2^{-n}}
 \vint{v}^{m+\frac{\beta}{2}} \vpran{f - f \ast \psi_a}}_{L^\infty_{t,x}(L^\infty_v \cap L^1_v)}^2
 \leq C \Eps_a. 
\end{align*}
Therefore, for any $\tau_0 > 0$, which will be chosen to be small enough to be absorbed into the diffusion term, we have
\begin{align} \label{est:EJ-3}
 EJ_3
\leq
 \tau_0 \sum_{n=0}^\infty
 \norm{2^{\frac{\beta n}{2}} \nabla_v \CalM_n w_n}_{L^2_{T,t,x,v}}^2
+ \frac{C}{\tau_0 \sqrt{\delta}}
  \vpran{\Eps_a + \frac{T}{a}} \norm{\vint{v}^{m+\beta} f}_{L^\infty_{t,x,v}}^2
\norm{w}_{L^2_{t,x,v}}^2\,.
\end{align}

\Ni \underline{Estimate of $EJ_4$:} The $EJ_4$-term for each $n$ has the form
\begin{align*}
& \quad \,
 - \int_0^{T_0} \int_0^T \iint_{\T^3 \times \R^3}
   \vpran{\CalM_n w_n} \,
  \CalM_n \nabla_v \cdot 
    \vpran{\rho_g \vint{v}^\beta
    \nabla_v\vpran{\theta_n \vint{v}^m} h} 
\\
& = \int_0^{T_0} \int_0^T \iint_{\T^3 \times \R^3}
   \vpran{2^{\frac{\beta n}{2}}\nabla_ v\CalM_n w_n} \, \cdot
  \CalM_n 
    \vpran{\rho_g 2^{-\frac{\beta n}{2}}\vint{v}^\beta
    \nabla_v\vpran{\theta_n \vint{v}^m} h} 
\\
& \leq 
 \norm{2^{\frac{\beta n}{2}}\CalM_n w_n}_{L^2_{T,t,x,v}}
 \norm{\CalM_n 
    \vpran{\rho_g 2^{-\frac{\beta n}{2}}\vint{v}^{\beta-m}
    \nabla_v\vpran{\theta_n \vint{v}^m} w}}_{L^2_{T,t,x,v}}
\\
& \leq
 C \sqrt{T} 
 \norm{\rho_g}_{L^\infty_{t,x}}
 \norm{2^{\frac{\beta n}{2}}\CalM_n w_n}_{L^2_{T,t,x,v}}
 \norm{\vint{v}^{\frac{\beta}{2}-1} (\theta_n + |\theta'_n|)
    w}_{L^2_{T,t,x,v}}
\\
& \leq
 C \sqrt{T} 
 \norm{\rho_g}_{L^\infty_{t,x}}
 \norm{2^{\frac{\beta n}{2}}\CalM_n w_n}_{L^2_{t,x,v}}
 \norm{(\theta_n + |\theta'_n|)
    w}_{L^2_{t,x,v}}, 
\qquad \text{for \,} \beta \leq 2.
\end{align*}
Similar as the estimate for $EJ_3$, if we sum over $n \in \N$, then
\begin{align} \label{est:EJ-4}
 EJ_4
\leq
 \sqrt{T} \sum_{n=0}^\infty
 \norm{2^{\frac{\beta n}{2}} \nabla_v \CalM_n w_n}_{L^2_{T,t,x,v}}^2
 + C \, \sqrt{T} \norm{\rho_g}_{L^\infty_{t,x}}^2
 \norm{w}_{L^2_{t,x,v}}^2,
\end{align}
where by the compact support on each ring and ~\eqref{cond:theta-k-2} in Lemma~\ref{lem:dyadic}, we have applied 
\begin{align*}
 \sum_{n=0}^\infty
 \vint{v} \abs{\nabla_v \theta_n} 
\leq 
 2 \sup_n \norm{\vint{v}\nabla_v \theta_n}_{\R^3} 
< \infty.
\end{align*}

\Ni \underline{Estimate of $EJ_5$:} The $EJ_5$-term for each $n$ has the form
\begin{align*}
& \quad \,
 - \int_0^{T_0} \int_0^T \iint_{\T^3 \times \R^3}
   \vpran{\CalM_n w_n}
   \CalM_n \vpran{\nabla_v \vpran{\theta_n \vint{v}^m}    \cdot \rho_h \vint{v}^\beta
   \nabla_v f}
\\
& = - \int_0^{T_0} \int_0^T \iint_{\T^3 \times \R^3}
   \vpran{\CalM_n w_n}
   \nabla_v \cdot \CalM_n \vpran{\nabla_v \vpran{\theta_n \vint{v}^m}    \rho_h \vint{v}^\beta
   f}
\\
& \quad \,
   +  \int_0^{T_0} \int_0^T \iint_{\T^3 \times \R^3}
   \vpran{\CalM_n w_n}
   \CalM_n \vpran{\rho_h 
   f \, \nabla_v \cdot \vpran{\nabla_v \vpran{\theta_n \vint{v}^m}    \vint{v}^\beta}},
\end{align*}
where
\begin{align*}
& \quad \,
 \abs{\int_0^{T_0} \int_0^T \iint_{\T^3 \times \R^3}
   \vpran{\CalM_n w_n}
   \nabla_v \cdot \CalM_n \vpran{\nabla_v \vpran{\theta_n \vint{v}^m}    \rho_h \vint{v}^\beta
   f}}
\\
&= \abs{\int_0^{T_0} \int_0^T \iint_{\T^3 \times \R^3}
   \vpran{\nabla_v \CalM_n w_n}
   \cdot \CalM_n \vpran{\nabla_v \vpran{\theta_n \vint{v}^m}    \rho_h \vint{v}^\beta
   f}}
\\
& \leq 
 C \sqrt{T}
 \norm{2^{\frac{\beta n}{2}}\CalM_n w_n}_{L^2_{T,t,x,v}}
 \norm{\rho_h}_{L^2_{t,x}}
 \norm{\nabla_v \vpran{\theta_n \vint{v}^m} \vint{v}^\beta f}_{L^\infty_{t,x}L^2_v}
\\
& \leq 
 C \sqrt{T}
 \norm{2^{\frac{\beta n}{2}}\CalM_n w_n}_{L^2_{T,t,x,v}}
 \norm{w}_{L^2_{t,x,v}}
 \norm{\nabla_v \vpran{\theta_n \vint{v}^m} \vint{v}^\beta f}_{L^\infty_{t,x}(L^\infty_v \cap L^1_v)}
\end{align*}
and
\begin{align*}
& \quad \,
\abs{\int_0^{T_0} \int_0^T \iint_{\T^3 \times \R^3}
   \vpran{\CalM_n w_n}
   \CalM_n \vpran{\rho_h 
   f \, \nabla_v \cdot \vpran{\nabla_v \vpran{\theta_n \vint{v}^m}    \vint{v}^\beta}}}
\\
& \leq 
\norm{\CalM_n w_n}_{L^2_{T,t,x,v}}
\norm{\rho_h}_{L^2_{t,x}}
\norm{f \nabla_v \cdot \vpran{\nabla_v \vpran{\theta_n \vint{v}^m}    \vint{v}^\beta}}_{L^\infty_{t,x}L^2_v}
\\
& \leq  
 C \, T
 \norm{w_n}_{L^2_{t,x,v}}
 \norm{w}_{L^2_{t,x,v}}
 \norm{f \nabla_v \cdot \vpran{\nabla_v \vpran{\theta_n \vint{v}^m}    \vint{v}^\beta}}_{L^\infty_{t,x}(L^\infty_v \cap L^1_v)}. 
\end{align*}
Therefore, the estimate related to $J_5$ is
\begin{align} \label{est:J-5}
& \quad \,
 - \int_0^{T_0} \int_0^T \iint_{\T^3 \times \R^3}
   \vpran{\CalM_n w_n}
   \CalM_n \vpran{\nabla_v \vpran{\theta_n \vint{v}^m}    \cdot \rho_h \vint{v}^\beta
   \nabla_v f}  \nn
\\
& \leq
 C \sqrt{T}
 \norm{2^{\frac{\beta n}{2}}\CalM_n w_n}_{L^2_{T,t,x,v}}
 \norm{w}_{L^2_{t,x,v}}
 \norm{\nabla_v \vpran{\theta_n \vint{v}^m} \vint{v}^\beta f}_{L^\infty_{t,x}L^2_v} \nn
\\
& \quad \,
 + C \, T
 \norm{w_n}_{L^2_{t,x,v}}
 \norm{w}_{L^2_{t,x,v}}
 \norm{f \nabla_v \cdot \vpran{\nabla_v \vpran{\theta_n \vint{v}^m}    \vint{v}^\beta}}_{L^\infty_{t,x}(L^\infty_v \cap L^1_v)}. 
\end{align}
By Lemma~\ref{lem:dyadic},
\begin{align*}
 \sum_{n=0}^\infty
 \norm{\nabla_v \vpran{\theta_n \vint{v}^m} \vint{v}^\beta f}_{L^\infty_{t,x}(L^\infty_v \cap L^1_v)}^2
&\leq
 \sum_{n=0}^\infty
 \norm{\vpran{\theta_n + |\nabla_v \theta_n|} \vint{v}^{m+\beta} f}_{L^\infty_{t,x}(L^\infty_v \cap L^1_v)}^2
\\
& \leq
 C \norm{\vint{v}^{m+\beta} f}_{L^\infty_{t,x}(L^\infty_v \cap L^1_v)}^2,
\end{align*}
and
\begin{align*}
& \quad \,
 \sum_{n=0}^\infty
 \norm{f \nabla_v \cdot \vpran{\nabla_v \vpran{\theta_n \vint{v}^m}    \vint{v}^\beta}}_{L^\infty_{t,x}(L^\infty_v \cap L^1_v)}^2
\\
&\leq
 C \norm{f \vint{v}^{m+\beta} \vpran{\theta_n + |\nabla_v \theta_n| + |\nabla_v^2 \theta_n|}}_{L^\infty_{t,x}(L^\infty_v \cap L^1_v)}^2
%\\
 \leq 
 C \norm{f \vint{v}^{m+\beta}}_{L^\infty_{t,x}(L^\infty_v \cap L^1_v)}^2. 
\end{align*}
Here we require that $m + \beta \leq k_0$. Therefore,
\begin{align} \label{est:EJ-5}
 EJ_5
\leq
 \sqrt{T} \sum_{n=0}^\infty
 \norm{2^{\frac{\beta n}{2}} \nabla_v \CalM_n w_n}_{L^2_{T,t,x,v}}^2
 + C \, T 
 \norm{f \vint{v}^{m+\beta}}_{L^\infty_{t,x}(L^\infty_v \cap L^1_v)}^2
 \norm{w}_{L^2_{t,x,v}}^2. 
\end{align}

\Ni \underline{Estimate of $EJ_6$:} The $EJ_6$-term for each $n$ has the form
\begin{align*}
& \quad \,
 - \int_0^{T_0} \int_0^T \iint_{\T^3 \times \R^3}
   \vpran{\CalM_n w_n} \,
   \CalM_n \vpran{\nabla_v\vpran{\theta_n \vint{v}^m}    \cdot \rho_g \vint{v}^\beta
   \nabla_v h}
\\
& = \int_0^{T_0} \int_0^T \iint_{\T^3 \times \R^3}
   \vpran{\nabla_v \CalM_n w_n} \, \cdot
   \CalM_n \vpran{\nabla_v\vpran{\theta_n \vint{v}^m} \rho_g \vint{v}^\beta
   h}
\\
& \quad \,
 - \int_0^{T_0} \int_0^T \iint_{\T^3 \times \R^3}
   \vpran{\CalM_n w_n} \,
   \CalM_n \vpran{\nabla_v \cdot \vpran{\vint{v}^\beta \nabla_v\vpran{\theta_n \vint{v}^m}}    
   \rho_g h},
\end{align*}
where
\begin{align*}
& \quad \,
\int_0^{T_0} \int_0^T \iint_{\T^3 \times \R^3}
   \vpran{\nabla_v \CalM_n w_n} \, \cdot
   \CalM_n \vpran{\nabla_v\vpran{\theta_n \vint{v}^m} \rho_g \vint{v}^{\beta}
   h}
\\
& = \int_0^{T_0} \int_0^T \iint_{\T^3 \times \R^3}
   \vpran{2^{\frac{\beta n}{2}}\nabla_v \CalM_n w_n} \, \cdot
   \CalM_n \vpran{2^{-\frac{\beta n}{2}}\nabla_v\vpran{\theta_n \vint{v}^m} \rho_g \vint{v}^{\beta-m}
   w}
\\
& \leq 
 \norm{2^{\frac{\beta n}{2}}\CalM_n w_n}_{L^2_{T,t,x,v}}
 \norm{\rho_g}_{L^2_{t,x}}
 \norm{2^{-\frac{\beta n}{2}}\nabla_v\vpran{\theta_n \vint{v}^m} \vint{v}^{\beta-m}
   w}_{L^2_{T,t,x,v}}
\\
& \leq 
 C \sqrt{T}
 \norm{2^{\frac{\beta n}{2}}\CalM_n w_n}_{L^2_{T,t,x,v}}
 \norm{\rho_g}_{L^\infty_{t,x}}
 \norm{\nabla_v\vpran{\theta_n \vint{v}^m} \vint{v}^{\frac{\beta}{2}-m}
   w}_{L^2_{t,x,v}} ,
\end{align*}
and
\begin{align*}
& \quad \,
\abs{\int_0^{T_0} \int_0^T \iint_{\T^3 \times \R^3}
   \vpran{\CalM_n w_n} \,
   \CalM_n \vpran{\nabla_v \cdot \vpran{\vint{v}^\beta \nabla_v\vpran{\theta_n \vint{v}^m}}    
   \rho_g h}}
\\
& \leq
\norm{\CalM_n w_n}_{L^2_{T,t,x,v}}
\norm{\rho_g}_{L^\infty_{t,x}}
\norm{\nabla_v \cdot \vpran{\vint{v}^\beta \nabla_v\vpran{\theta_n \vint{v}^m}} \vint{v}^{-m} w}_{L^2_{T,t,x,v}}
\\
& \leq
 C \, T
 \norm{w_n}_{L^2_{t,x,v}}
\norm{\rho_g}_{L^\infty_{t,x}}
\norm{\nabla_v \cdot \vpran{\vint{v}^\beta \nabla_v\vpran{\theta_n \vint{v}^m}} \vint{v}^{-m} w}_{L^2_{t,x,v}}.
\end{align*}
Therefore, 
\begin{align*}
 (EJ_6)_n
&\leq
 C \sqrt{T}
 \norm{2^{\frac{\beta n}{2}}\CalM_n w_n}_{L^2_{T,t,x,v}}
 \norm{\rho_g}_{L^\infty_{t,x}}
 \norm{\nabla_v\vpran{\theta_n \vint{v}^m} \vint{v}^{\frac{\beta}{2}-m}
   w}_{L^2_{t,x,v}}
\\
& \quad \, 
 + C \, T
 \norm{w_n}_{L^2_{t,x,v}}
\norm{\rho_g}_{L^\infty_{t,x}}
\norm{\nabla_v \cdot \vpran{\vint{v}^\beta \nabla_v\vpran{\theta_n \vint{v}^m}} \vint{v}^{-m} w}_{L^2_{t,x,v}}.
\end{align*}
By Lemma~\ref{lem:dyadic}, %and~\eqref{bound:sum-h} at the end of its proof, 
\begin{align*}
 \sum_{n=0}^\infty
 \norm{\nabla_v\vpran{\theta_n \vint{v}^m} \vint{v}^{\frac{\beta}{2}-m}
   w}_{L^2_{t,x,v}}^2
& \leq
 C \sum_{n=0}^\infty
 \norm{\vpran{\theta_n \vint{v}^{\frac{\beta}{2}-1} + |\nabla_v \theta_n| \vint{v}^{\frac{\beta}{2}}}
   w}_{L^2_{t,x,v}}^2
% \\
% & \leq
%  C \sum_{n=0}^\infty
%  \norm{\vpran{\theta_n \vint{v}^{\frac{\beta}{2}-1} + |h'(2^{-k}|v|)|2^{-n} \vint{v}^{\frac{\beta}{2}}}
%    w}_{L^2_{t,x,v}}^2
% \\
% & \leq
%  C \sum_{n=0}^\infty
%  \norm{\vpran{\theta_n \vint{v}^{\frac{\beta}{2}-1} + |h'(2^{-k}|v|)| \vint{v}^{\frac{\beta}{2} - 1}}
%    w}_{L^2_{t,x,v}}^2
\\
& \leq
 C \norm{\vint{v}^{\frac{\beta}{2} - 1}
   w}_{L^2_{t,x,v}}^2
\leq
 \norm{w}_{L^2_{t,x,v}}^2,
\qquad 
\text{for \,} \beta < 2.
\end{align*}
Moreover, 
\begin{align*}
& \quad \,
\sum_{n=0}^\infty
\norm{\nabla_v \cdot \vpran{\vint{v}^\beta \nabla_v\vpran{\theta_n \vint{v}^m}} \vint{v}^{-m} w}_{L^2_{t,x,v}}^2
\\
& \leq 
C \sum_{n=0}^\infty
\norm{\theta_n \vint{v}^{\beta -2} w}_{L^2_{t,x,v}}^2
+ C \sum_{n=0}^\infty
\norm{\abs{\nabla_v\theta_n} \vint{v}^{\beta -1} w}_{L^2_{t,x,v}}^2
+ C\sum_{n=0}^\infty
\norm{\abs{\nabla_v^2\theta_n} \vint{v}^{\beta} w}_{L^2_{t,x,v}}^2
% \\
% & \leq
% C\sum_{n=0}^\infty
% \norm{\theta_n \vint{v}^{\beta -2} w}_{L^2_{t,x,v}}^2
% + C\sum_{n=0}^\infty
% \norm{h'(2^{-k |v|}) \vint{v}^{\beta -2} w}_{L^2_{t,x,v}}^2
% + C\sum_{n=0}^\infty
% \norm{h''(2^{-k} |v|) \vint{v}^{\beta-2} w}_{L^2_{t,x,v}}^2
% \\
% & \leq
% C\sum_{n=0}^\infty
% \norm{\theta_n \vint{v}^{\beta -2} w}_{L^2_{t,x,v}}^2
% + C\sum_{n=0}^\infty
% \norm{h'(2^{-k |v|}) \vint{v}^{\beta -2} w}_{L^2_{t,x,v}}^2
% + C\sum_{n=0}^\infty
% \norm{h''(2^{-k} |v|) \vint{v}^{\beta -2}w}_{L^2_{t,x,v}}^2
\\
& \leq 
C \norm{\vint{v}^{\beta -2} w}_{L^2_{t,x,v}}^2
\leq 
C \norm{w}_{L^2_{t,x,v}}^2,
\qquad
\text{for \,} \beta \leq 2.
\end{align*}
Altogether, we have
\begin{align} \label{est:EJ-6}
 EJ_6
\leq
 \sqrt{T} 
 \sum_{n=0}^\infty
 \norm{2^{\frac{\beta n}{2}} \nabla_v \CalM_n w_n}_{L^2_{T,t,x,v}}^2
 + C \sqrt{T} \norm{\rho_g}_{L^\infty_{t,x}}^2
 \norm{w}_{L^2_{t,x,v}}^2. 
\end{align}

\Ni \underline{Estimate of $ER_1$:} The $ER_1$-term for each $n$ directly follows from~\eqref{trasp_comm} or~\eqref{def:R-1}
\begin{align} \label{est:ER-1}
 \sum_{n=0}^\infty
 \int_0^{T_0} \int_0^T \iint_{\T^3 \times \R^3}
   \vpran{\CalM_n w_n} \,
   \CalM_n R_1
\leq 
 C \delta 
\sum_{n=0}^\infty
\norm{2^{\frac{\beta n}{2}} \nabla_v \CalM_n w_n}_{L^2_{T,t,x,v}}^2.
\end{align}
Combining~\eqref{est:LHS} and the estimates for $EJ_i$'s, we have
\begin{align} \label{est:full-toy}
 \frac{1}{6} \norm{w}_{L^2_{t,x,v}}^2
&\leq
 \vpran{- c_1
 + \frac{CT}{a^5}
 + \Eps_a
 + \tau_0
 + \sqrt{T}
 + C \delta} 
 \sum_{n=0}^\infty
 \norm{2^{\frac{\beta n}{2}} \nabla_v \CalM_n w_n}_{L^2_{T,t,x,v}}^2 \nn
\\
& \quad \,
 + \vpran{C \sqrt{T} \norm{\rho_g}_{L^\infty}^2
 + \frac{C \Eps_a}{\delta}
 + C \, \sqrt{T} 
 \norm{f \vint{v}^{m+\beta}}_{L^\infty_{t,x,v}}^2} \norm{w}_{L^2_{t,x,v}}^2 \nn
\\
& \quad \,
 +\frac{C}{\tau_0 \sqrt{\delta}}
  \vpran{\Eps_a + \frac{T}{a}} \norm{\vint{v}^{m+\beta} f}_{L^\infty_{t,x,v}}^2
\norm{w}_{L^2_{t,x,v}}^2,
\end{align}
where we have denoted
\begin{align*}
 c_1 := 4^{-\min(\beta, 0)} \frac{c_0}{8}. 
\end{align*}
Choose the parameters in the following order: first we choose $\delta, \tau_0$ small such that 
\begin{align*}
 C\delta + \tau_0 < \frac{c_1}{8}.
\end{align*}
Then we choose $a$ small such that 
\begin{align*}
  \Eps_a < \frac{c_1}{100}, 
\qquad
  \frac{C\Eps_a}{\delta}
< \frac{1}{100},
\qquad
  \frac{C\Eps_a}{\tau_0}
< \frac{1}{100}.
\end{align*}
Finally we choose $T$ small such that
\begin{align*}
 \frac{CT}{a^5} + \sqrt{T} < \frac{c_1}{8},
\qquad
 C \sqrt{T} \norm{\rho_g}_{L^\infty}^2
 + C \, \sqrt{T} 
 \norm{f \vint{v}^{4+m+\beta}}_{L^\infty_{t,x,v}}^2
< \frac{1}{100},
\end{align*}
and
\begin{align*}
 \frac{CT}{\tau_0 a \sqrt{\delta}}
 \norm{\vint{v}^{m+\beta} f}_{L^\infty_{t,x,v}}^2
< \frac{1}{100}.
\end{align*}
These choices reduce~\eqref{est:full-toy} to
\begin{align*}
 \frac{1}{6} \norm{w}_{L^2_{t,x,v}}^2
&\leq
 -\frac{c_1}{8} 
 \sum_{n=0}^\infty
 \norm{2^{\frac{\beta n}{2}} \nabla_v \CalM_n w_n}_{L^2_{T,t,x,v}}^2
 + \frac{1}{10} \norm{w}_{L^2_{t,x,v}}^2,
\end{align*}
from which we obtain the desired result of $w = 0$.
\end{proof}

%\newpage

\section{Landau Equation with Viscosity}

We consider a viscous approximation of the inhomogeneous Landau equation with Coulomb potential 
\begin{align}\label{FPL}
\partial_t f + v \cdot \nabla_x f = \nu \Delta_v f  + Q_L ( f,f),  
% \textrm{div}_v \left( A[f] \nabla f - f \nabla a[f]\right)
\end{align}
where $Q_L$ is the Landau operator, defined as 
$$
Q_L ( f,f) :=   \textrm{div}_v \left( A[f] \nabla f - f \nabla a[f]\right),
$$
with 
\begin{align*}
A[f] (x,v,t)& := \frac{1}{8\pi} \int_{\mathbb{R}^3} \frac{\mathbb{P}(v-z) }{|v-z|} f(x,z,t)\;dz,\\
a[f] (x,v,t) &:= \frac{1}{4\pi} \int_{\mathbb{R}^3} \frac{1 }{|v-z|} f(x,z,t)\;dz,
\end{align*}
with the relation that $\textrm{div}_v A[f] = \nabla_v a[f]$.  A smooth solution to (\ref{FPL}) is defined as follows: %smooth solutions. 
%By that we mean that $f$ is as in the following definition: 

\begin{definition} \label{D_existence}
A positive function $f$ is a classical solution to (\ref{FPL}) in $(0,T) \times \T^3 \times \R^{3}$ with initial data $f_{in} \in L^{\infty}_x L^\infty_{v,k}$ and $k>\frac{15}{2}$ if $f$ satisfies (\ref{FPL}) classically, $ \|\vint{v}^k f\|_{L^\infty_{t,x,v}}$ depends only on the initial data, and $ f \in C_{kin}^{2,\alpha}(\Omega)$ for any compact subset $\Omega \subset (0, T) \times \T^3 \times \mathbb{R}^{3}$.  
\end{definition} 
The space $C_{kin}^{2,\alpha}$ is an abbreviation for the kinetic representation of the classical H\"older spaces used for Schauder estimates $C_t^{1+\frac{\alpha}{2}}C_x^{\frac{2+\alpha}{3}}C_v^{2+\alpha}$. 
%Hereafter, we denote with $\langle \cdot  \rangle$ the smooth function 
% % $$
% % \langle v  \rangle := ( 1+ |v|^2)^{1/2},
% % $$
% and by $L^{p,k}_{x,v}(\mathbb{R}^{6})$ the set of functions $f$ such that $ f \langle v  \rangle^k$ belongs to $L^p(\mathbb{R}_x^{3} \times \mathbb{R}_v^{3} )$. 

We prove the following two theorems.

\begin{theorem} \label{Thm_Landau_0}
Let $f(x,v,t)$ be a solution of (\ref{FPL}) as in Definition \ref{D_existence} in the time interval $[0,T_\ast]$. Let $\nu >0$ be any arbitrary positive constant. Then $f$ is unique in the class of continuous functions $f$ such that  $ \langle v \rangle ^m f \in  L^\infty_{t,x}(L^4_v \cap L^1_v)$, for  $m > 3$. 
\end{theorem}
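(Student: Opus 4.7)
The plan is to adapt the $\mathcal{M}$-operator approach from the proof of Theorem~\ref{thm:unique-w-toy} to the viscous Landau setting, simplified by the fact that the viscosity $\nu\Delta_v$ commutes exactly with $\mathcal{M}$ (both being Fourier multipliers in $v$) and supplies a uniform isotropic dissipation $\nu\|\nabla_v\mathcal{M}h\|^2$ into which all second-order commutator errors can be absorbed; in particular, no dyadic velocity decomposition is required. Let $f,g$ be two solutions as in Definition~\ref{D_existence} with the same initial data, set $h := f-g$, and use bilinearity of $Q_L$ to write
\begin{align*}
\partial_t h + v\cdot\nabla_x h
 = \nu\Delta_v h + \nabla_v\!\cdot\!\bigl(A[g]\nabla_v h - h\,\nabla_v a[g]\bigr)
 + \nabla_v\!\cdot\!\bigl(A[h]\nabla_v f - f\,\nabla_v a[h]\bigr),
\end{align*}
with $h|_{t=0}=0$. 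Applying $\mathcal{M}$, testing against $\mathcal{M}h$ in $L^2_{x,v}$ and integrating over $0\le t\le T\le T_0$, the time derivative produces $\tfrac12\|h\|_{L^2_{t,x,v}}^2$ on the left (since $M(T,T,\cdot)=1$ and the data vanish), the transport term drops by integration by parts, and $\mathcal{M}\Delta_v=\Delta_v\mathcal{M}$ gives the coercive contribution $-\nu\,\|\nabla_v\mathcal{M}h\|_{L^2_{T,t,x,v}}^2$.

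The transport commutator $[\mathcal{M},\,\partial_t+v\cdot\nabla_x]h$ from~\eqref{trasp_comm} is controlled via~\eqref{int_xi_M2}--\eqref{lapl} (as in the $ER_1$ step of Theorem~\ref{thm:unique-w-toy}) and absorbed into the viscosity once $\delta\nu^{-1}$ is taken small. For the linear-in-$h$ Landau contributions $\nabla_v\!\cdot\!(A[g]\nabla_v h)$ and $-\nabla_v\!\cdot\!(h\nabla_v a[g])$, the coefficients $A[g]$ and $\nabla_v a[g]$ are smooth in $(x,v)$ with polynomial decay in $v$, since $g\in C^{2,\alpha}_{\mathrm{kin}}$ has the $\langle v\rangle^k$-tail prescribed by Definition~\ref{D_existence}. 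I would commute $\mathcal{M}$ past them via Lemma~\ref{lem:M-comm-x-v}, then integrate by parts to transfer $\nabla_v$ onto $\mathcal{M}h$; the nonnegativity of $A[g]$ (inherited from $g\ge 0$) yields the anisotropic dissipation $\int A[g]\nabla_v\mathcal{M}h\cdot\nabla_v\mathcal{M}h\ge 0$, while the drift term and the pseudo-differential commutator remainders are controlled by Cauchy--Schwarz as $\varepsilon\|\nabla_v\mathcal{M}h\|^2 + C_\varepsilon\|h\|_{L^2}^2$.

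The quadratic-in-$h$ Landau terms $\nabla_v\!\cdot\!(A[h]\nabla_v f)$ and $-\nabla_v\!\cdot\!(f\,\nabla_v a[h])$ are the chief difficulty, since $A[h]$ and $\nabla_v a[h]$ are Riesz-type nonlocal convolutions of $h$ that are not amenable to the smooth-coefficient commutator lemmas. I plan to move $\nabla_v$ onto $\mathcal{M}h$ by integration by parts and apply Young's inequality,
\begin{align*}
\Bigl|\int \nabla_v\mathcal{M}h\cdot\mathcal{M}\bigl(A[h]\nabla_v f\bigr)\Bigr|
\le \varepsilon\,\|\nabla_v\mathcal{M}h\|_{L^2}^2
+ C_\varepsilon\,\bigl\|A[h]\,\nabla_v f\bigr\|_{L^2_{t,x,v}}^2,
\end{align*}
and bound $A[h], \nabla_v a[h]$ pointwise by a near/far split of the kernel $|v-z|^{-1}$ (or Hardy--Littlewood--Sobolev) to obtain $|A[h](x,v,t)| \le C\bigl(\|h(x,\cdot,t)\|_{L^4_v} + \|h(x,\cdot,t)\|_{L^1_v}\bigr)$ and a similar bound for $|\nabla_v a[h]|$. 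Combined with the $L^\infty_{t,x}L^2_v$ bound on $\nabla_v f$ from Definition~\ref{D_existence} and the tail hypothesis $\langle v\rangle^m h\in L^\infty_{t,x}(L^4_v\cap L^1_v)$ with $m>3$, a Cauchy--Schwarz against $\langle v\rangle^{-s}$ with $s>3/2$ converts the mixed-norm bounds into $C(f,g)\,\|h\|_{L^2_{t,x,v}}^2$.

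Assembling everything, the energy inequality takes the schematic form
\begin{align*}
\tfrac12\|h\|_{L^2_{t,x,v}}^2
\le -\bigl(\nu - C\delta - \varepsilon - C\sqrt{T_0}\bigr)\|\nabla_v\mathcal{M}h\|_{L^2_{T,t,x,v}}^2
+ C(f,g,\varepsilon,\delta)\sqrt{T_0}\,\|h\|_{L^2_{t,x,v}}^2.
\end{align*}
Fixing $\delta\ll \nu$, then $\varepsilon$ small, and finally $T_0$ small (all with smallness depending only on the a priori norms of $f,g$) forces $\|h\|_{L^2_{t,x,v}}=0$ on $[0,T_0]$; a standard iteration in time then yields uniqueness on $[0,T_\ast]$. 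The decisive obstacle is the treatment of the nonlocal terms $A[h]$ and $\nabla_v a[h]$: the $\mathcal{M}$-commutator toolkit does not apply to Riesz-type convolutions against $h$ itself, and one must combine sharp pointwise/HLS bounds with the prescribed velocity tail to embed these nonlocal convolutions into the $L^2_{t,x,v}$ energy framework supplied by the viscosity.
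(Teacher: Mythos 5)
Your overall scaffolding (apply $\mathcal{M}$, test against $\mathcal{M}h$, let the $\nu\Delta_v$ viscosity absorb the transport commutator, estimate the Landau pieces by Young) matches the paper's strategy, but there are two points where your proposal, as written, would not go through and where the paper does something essentially different.

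First, in the linear-in-$h$ Landau part you propose to commute $\mathcal{M}$ past $A[g]$ and $\nabla_v a[g]$ directly via Lemma~\ref{lem:M-comm-x-v}, on the grounds that $g\in C^{2,\alpha}_{\mathrm{kin}}$. That lemma requires $C^\infty_b$ coefficients, and $C^{2,\alpha}_{\mathrm{kin}}$ on compacts is not enough. The paper instead mollifies: $A[f_1]$ is replaced by $A[f_1]\ast\psi_a$ for the smooth/commutator part (preserving nonnegativity of the matrix), and the remainder $A[f_1]-A[f_1]\ast\psi_a$ is treated as a \emph{small} rough coefficient, smallness coming from uniform convergence of the mollification (which is exactly where continuity of the solution enters). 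This split is needed because the ``rough'' piece gets hit by the $\tfrac{1}{\delta\varepsilon}$ factor from~\eqref{int_xi_M2}, and $\delta$ must be taken small relative to $\nu$; you cannot make that factor harmless without the uniform smallness of $A[f_1]-A[f_1]\ast\psi_a$.

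Second, and more seriously, your treatment of the quadratic term $\nabla_v\cdot(A[h]\nabla_v f)$ relies on an ``$L^\infty_{t,x}L^2_v$ bound on $\nabla_v f$ from Definition~\ref{D_existence}.'' No such bound is part of the hypotheses: Definition~\ref{D_existence} gives $C^{2,\alpha}_{\mathrm{kin}}$ regularity on compact subsets and a weighted $L^\infty_{t,x,v}$ bound on $f$, but no global velocity-derivative control. In fact, avoiding the need for such derivative bounds is the entire motivation for the $\mathcal{M}$-operator method stated in the abstract, so invoking one is circular. The paper handles this term without ever estimating $\nabla_v f$ (equivalently $\nabla_v g_2$) in $L^2$: it works with the weighted unknown $g=\langle v\rangle^m f$ (so that~\eqref{A_sup}--\eqref{nablaa_L6} convert the tail hypothesis into $L^p_v$ bounds on $A$ and $\nabla a$), splits $\nabla g_2 = g_2\ast\nabla\psi_a + (\nabla g_2 - \nabla g_2\ast\psi_a)$, uses the mollified derivative (bounded by $c(a)$) for the first piece with a final $T_0$-smallness absorption, and for the second piece integrates by parts a \emph{second} time so that the extra derivative lands on $\mathcal{M}$ (not on $g_2$) and can be killed by~\eqref{int_xi_M2}, leaving a factor $g_2-g_2\ast\psi_a$ that is uniformly small by continuity. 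Your proposal, which tries to close the estimate with a pointwise/HLS bound on $A[h]$ times a nonexistent bound on $\nabla_v f$, has a genuine gap at exactly the step you flagged as the ``decisive obstacle.''
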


\begin{theorem} \label{Thm_Landau}
Let $f(x,v,t)$ be a solution of (\ref{FPL}) as in Definition \ref{D_existence} in the time interval $[0,T_\ast]$. For any viscosity constant $\nu >0$ there exists a positive constant $c_0$ such that if the solution $f$ satisfies 
%There exists a positive constant $c_0$ only depending on the viscosity constant $\nu$ such that, if  
\begin{align}\label{init_cond_small}
\sup_{t\in[0,T_\ast]} \| \langle v \rangle ^m f\|_{ L^\infty_{x}(L^4_v \cap L^1_v)} \le c_0, \quad \textrm{  $m > 3$},
\end{align}
then  $f$ is unique in the class of functions $ \langle v \rangle ^m f \in  L^\infty_{t,x}(L^4_v \cap L^1_v)$. 
\end{theorem}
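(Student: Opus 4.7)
The plan is to run the same $\CalM$-operator energy argument as in Theorem~\ref{thm:unique-w-toy} and Theorem~\ref{Thm_Landau_0}, with two structural simplifications and one new input. First, because the uniform viscosity $\nu\Delta_v$ is elliptic in $v$ with no weight gap, the weighted dyadic operators $\CalM_n$ of Section~\ref{Sec:toy} are unnecessary; a single unweighted $\CalM$ is enough to absorb the transport commutator. Second, continuity of the solutions is no longer assumed, so the $L^\infty$ uniform-convergence tool from Lemma~\ref{lem:kernels}(b) is no longer available---this is precisely where the smallness hypothesis~\eqref{init_cond_small} enters, replacing pointwise information of $f$ by a global smallness norm.

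Setting $w=f-g$ and using the bilinear splitting $Q_L(f,f)-Q_L(g,g)=Q_L(w,f)+Q_L(g,w)$, I would work with
\begin{align*}
\partial_t w + v\cdot\nabla_x w
=\nu\Delta_v w
+\nabla_v\!\cdot\!\bigl(A[g]\nabla_v w - w\,\nabla_v a[g]\bigr)
+\nabla_v\!\cdot\!\bigl(A[w]\nabla_v f - f\,\nabla_v a[w]\bigr),
\end{align*}
introduce the weighted unknown $w_m:=\vint{v}^m w$, derive its equation (picking up harmless lower-order terms from $[\vint{v}^m,\nabla_v]$), apply $\CalM$, and test against $\CalM w_m$ integrated over $[0,T_0]\times[0,T]\times\T^3\times\R^3$. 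The left-hand side produces $\tfrac12\|w_m\|_{L^2_{t,x,v}}^2$ after summation in $T$ as in~\eqref{est:LHS}; the transport integrates to zero; the viscosity yields the dissipation $\nu\|\nabla_v\CalM w_m\|_{L^2_{T,t,x,v}}^2$ (since $\CalM$ and $\Delta_v$ commute as Fourier multipliers), while the transport commutator~\eqref{trasp_comm} combined with~\eqref{lapl} contributes at most $C\delta\|\nabla_v\CalM w_m\|_{L^2_{T,t,x,v}}^2$, absorbed by choosing $\delta\leq\nu/(2C)$. All commutators of $\CalM$ with $x$- and $v$-dependent coefficients are handled by Lemmas~\ref{lem:M-comm-x}--\ref{lem:M-comm-x-v} as in the proof of Theorem~\ref{thm:unique-w-toy}.

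For the linear-in-$w$ contribution $Q_L(g,w)$, a standard Hardy--Littlewood--Sobolev splitting of the $1/|v|$ kernel into near and far pieces yields
\begin{align*}
\|A[g]\|_{L^\infty_{t,x,v}}+\|\nabla_v a[g]\|_{L^\infty_{t,x,v}}
\leq C\,\|\vint{v}^m g\|_{L^\infty_{t,x}(L^1_v\cap L^4_v)},
\end{align*}
reducing $Q_L(g,w)$ to a bounded perturbation of a uniformly elliptic operator and allowing the same treatment as $EJ_1$ of Theorem~\ref{thm:unique-w-toy} with $\beta=0$ (and with the singular part $\rho_g-\rho_g\ast\phi_a$ replaced by $A[g]-A[g]\ast_x\phi_a$, which tends to $0$ in $L^p_{t,x}L^\infty_v$ since $A[g]\in L^p_{t,x}L^\infty_v$, not $L^\infty_{t,x}L^\infty_v$). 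The genuinely nonlinear $Q_L(w,f)$ is the main obstacle and the place where smallness is essential: the same HLS bound and the embedding $L^1_v\cap L^4_v\hookrightarrow L^2_v$ in $\R^3$ (via $\|h\|_{L^2}\leq\|h\|_{L^1}^{1/3}\|h\|_{L^4}^{2/3}$) give $\|A[w]\|_{L^\infty_v}+\|\nabla_v a[w]\|_{L^\infty_v}\leq C\|w_m\|_{L^2_v}$ after absorbing $\vint{v}^{-m}$ with $m>3$. Moving the $\nabla_v$ in $A[w]\nabla_v f$ off $f$ by integration by parts and onto $\CalM w_m$, each such term is bounded by
\begin{align*}
C\,\|\vint{v}^m f\|_{L^\infty_{t,x}(L^1_v\cap L^4_v)}\,\|w_m\|_{L^2_{t,x,v}}\,\|\nabla_v\CalM w_m\|_{L^2_{T,t,x,v}}
\leq Cc_0\,\|w_m\|_{L^2_{t,x,v}}\,\|\nabla_v\CalM w_m\|_{L^2_{T,t,x,v}}.
\end{align*}

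Choosing parameters in the order $\delta\ll\nu$, then $c_0\ll\nu$, then $T_0$ small in terms of $\nu,\delta,c_0$ and the fixed bounds on $f$ from Definition~\ref{D_existence}, Young's inequality absorbs every $\nabla_v\CalM w_m$ factor into the viscous dissipation, yielding a bound of the form $\tfrac12\|w_m\|_{L^2_{t,x,v}}^2\leq\tfrac14\|w_m\|_{L^2_{t,x,v}}^2+C\int_0^{T_0}\|w_m(s)\|_{L^2_{x,v}}^2\,ds$ on $[0,T_0]$. Gr\"onwall then forces $w\equiv 0$ on $[0,T_0]$, and iterating on successive subintervals of fixed length extends uniqueness to the full interval $[0,T_\ast]$. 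The principal technical difficulty I expect is to ensure that every nonlinear contribution carries a prefactor in the \emph{small} norm $\|\vint{v}^m f\|_{L^1_v\cap L^4_v}$ rather than in some derivative norm of $f$; this requires systematically integrating $\nabla_v$ off $f$ and commuting it through $\CalM$ and the $v$-weights via Lemma~\ref{lem:M-comm-v}, a step whose bookkeeping must be tracked carefully.
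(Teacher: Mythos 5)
Your high-level strategy is right: a single unweighted $\CalM$ with the uniform viscosity $\nu\Delta_v$ as dissipation, and the smallness hypothesis~\eqref{init_cond_small} replacing the continuity used in Theorem~\ref{Thm_Landau_0}. Your treatment of the nonlinear part $Q_L(w,f)$ — integrate $\nabla_v$ off $f$, use the HLS bounds~\eqref{A_sup}--\eqref{nablaa_L6}, absorb one $\nabla_v\CalM w$ into the dissipation — is also compatible with what the paper does.

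However, your proposed treatment of the $w$-diffusion term $Q_L(g,w)=\nabla_v\cdot(A[g]\nabla_v w-w\nabla_v a[g])$ has a gap. You want to split $A[g]=A[g]\ast_x\phi_a+(A[g]-A[g]\ast_x\phi_a)$ and reuse the $EJ_1/B_1$ argument from the continuous case, but the structure of that estimate (see $B_1$ in the proof of Theorem~\ref{Thm_Landau_0}, or $J_{2,1}$ in the toy model) genuinely requires $\norm{A[g]-A[g]\ast\phi_a}_{L^\infty_{t,x,v}}$ to be small, because after one integration by parts and using~\eqref{lapl} the residual is $\tfrac{C}{\nu\delta}\norm{(A[g]-A[g]\ast\phi_a)w}_{L^2_{t,x,v}}^2$, and $w$ is only in $L^2_{t,x,v}$. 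Your own remark that the remainder only tends to zero in $L^p_{t,x}L^\infty_v$ (not $L^\infty_{t,x,v}$) when $g$ is merely bounded is precisely the obstruction: an $L^p_{t,x}$ smallness cannot be paired with $w\in L^2_{t,x,v}$ to make this term vanish, so the argument does not close.

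The paper avoids the mollification step entirely. It rewrites the full Landau diffusion in double-divergence form,
\begin{align*}
\nabla_v\cdot\bigl(A[g]\nabla_v w - w\,\nabla_v a[g]\bigr)
= \nabla_v\otimes\nabla_v : \bigl(A[g]\,w\bigr) - 2\,\nabla_v\cdot\bigl(w\,\nabla_v a[g]\bigr),
\end{align*}
using $\nabla_v\cdot A[g]=\nabla_v a[g]$. Testing with $\CalM w$, one integration by parts moves one $\nabla_v$ to $\CalM w$ (absorbed by $\nu\Delta_v$), and the remaining $\nabla_v$ on $\CalM(A[g]\,w)$ is absorbed by the time-averaging estimate~\eqref{int_xi_M2}, which trades the derivative for a factor $1/(\Eps\delta)$. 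The residual is then $\tfrac{C}{\delta\Eps\nu}\norm{g}_{L^\infty_{t,x}L^2_v}^2\norm{w}_{L^2_{t,x,v}}^2$, and it is exactly this $\delta$-amplified coefficient that makes the smallness~\eqref{init_cond_small} of $\norm{\vint{v}^m f}_{L^\infty_{t,x}(L^1_v\cap L^4_v)}$ necessary and sufficient to close the Gr\"onwall loop. The analogous rewriting $I_2=\nabla_v\otimes\nabla_v:(A[f_1-f_2]g_2)-2\nabla_v\cdot(g_2\nabla_v a[f_1-f_2])$ handles the nonlinear part. If you replace your mollification step by this double-divergence identity, the rest of your plan should carry through.
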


\begin{remark} 
Let $f(x,v,t)$ be a solution of (\ref{FPL}) with $f_{in}$ as initial data. If $f_{in}$ is such that 
 $
\| \langle v \rangle ^k f_{in}\|_{ L^\infty_{x,v}} \le \mathcal{C}
$
for some $k\ge 0$, there exists a small time interval $[0,T_*]$ during which the $ L^\infty_{x,v}$-norm of $\langle v \rangle ^k f$ has grown at most twice. This can be shown by a maximum principle argument on (\ref{FPL}). In this time interval condition \eqref{init_cond_small} holds by taking $k$ big enough depending on $m$ and $\mathcal{C}$ small enough depending on $c_0$ and $m$.

% then a maximum principle argument on (\ref{FPL}) yields smallness of the solution in the same norm in a small time interval: in the time interval $ [0, \frac{1}{2 \| f_{in} \langle v \rangle^{2m}\|_{L^\infty_{x,v}}} ]$ the maximum of $f \langle v \rangle^{2m}$ has grown at most $2 \| f_{in} \langle v \rangle^{2m}\|_{L^\infty_{x,v}}$. Hence, 

%The smallness condition (\ref{cond_small}) is propagated by the smallness 

%in the initial data \eqref{init_cond_small} implies (\ref{cond_small}), using a maximum principle argument on (\ref{FPL}). In the time interval $ [0, \frac{1}{2 \| f_{in} \langle v \rangle^{2m}\|_{L^\infty_{x,v}}} ]$ the maximum of $f \langle v \rangle^{2m}$ has grown at most $2 \| f_{in} \langle v \rangle^{2m}\|_{L^\infty_{x,v}}$.  In the same time interval then, for any $m>3$, the function $g = f \langle v \rangle^{m}$ satisfies the bound: 
%$$
%\sup_t {\|g \|_{L^\infty_{x}(L_v^1\cap L_v^4)}} \le c \| f_{in} \langle v %\rangle^{2m}\|_{L^\infty_{x,v}}. 
%$$

%If the initial data satisfy $\| \langle v \rangle ^m f_{in}\|_{ L^\infty_{x}(L^\infty_v\cap L^1_v)} \le c$  then (\ref{init_cond_small}) is satisfied in a small time interval.
\end{remark}

The proofs of theorems \ref{Thm_Landau_0} and \ref{Thm_Landau} only differ in the treatment of the Landau diffusion terms (denoted in the proof by $I_1$ and $I_2$). If we assume continuity for the solutions in all variables $x,v,t$,  the regular part of the Landau diffusion together with any amount of artificial viscosity controls the commutator of $\mathcal{M}$ with the transport. If we work with merely bounded and integrable solutions, Theorem \ref{Thm_Landau}, we need an amount of artificial viscosity that depends on the initial data.

We start by rewriting (\ref{FPL}) for the function 
$$g := \langle v \rangle ^m f.$$
Working with $g$, instead of $f$, will be useful to estimate $A[f]$, $a[f]$ and their gradient only in terms of $L^2$ norms; we recall that  
\begin{align}\label{A_sup}
\| A[f]\|_{L^\infty_v(\mathbb{R}^3)} & \le \| a[f]\|_{L^\infty_v(\mathbb{R}^3)}  \le c(m)  \|   \langle v \rangle^{m}f\|_{L^2_v(\mathbb{R}^3)} \quad \textrm{for some $m>3/2$, } \\
\| \nabla a [f]\|_{L^\infty_v(\mathbb{R}^3)}&  \le c(m) \|   \langle v \rangle^{m}f\|_{L^4_v(\mathbb{R}^3)} \quad \textrm{for some $m>3/4$, } \label{nablaA_sup}\\
\| \nabla a [f]\|_{L^6_v(\mathbb{R}^3)} & \le   \| f\|_{L^2_v(\mathbb{R}^3)}.\label{nablaa_L6} 
\end{align}
Denote $\vint{\cdot, \cdot}$ as the inner product in $\R^3$. Since 
\begin{align*} 
 \langle v \rangle ^m  Q_L ( f,f) = &\; \textrm{div}_v \left( A[f] \nabla g - g \nabla a[f]\right) -m\; \textrm{div}_v \left( A[f]g \frac{v}{ \langle v \rangle ^2}\right)\\
 &  - m \left  \langle A[f]   \nabla g ,  \frac{v}{ \langle v \rangle ^2}\right  \rangle +m^2 g \left \langle A[f]  \frac{v}{ \langle v \rangle ^2}  ,  \frac{v}{ \langle v \rangle ^2}\right  \rangle + m \left \langle g \nabla a[f],  \frac{v}{ \langle v \rangle ^2}\right  \rangle,\\
%\end{align*}
%and 
%$$
 \langle v \rangle ^m \Delta_v f = &\; \Delta_v g - 2m \frac{v \cdot \nabla g }{\langle v \rangle ^2} + (m^2+2m)\frac{|v|^2g }{\langle v \rangle ^4} -3m \frac{ g }{\langle v \rangle ^2},
%$$
\end{align*}
the function $g$ solves
\begin{align}\label{FPLg}
\partial_t g + v \cdot \nabla_x g = \;&\textrm{div}_v \left( A[f] \nabla g - g \nabla a[f]\right) +\nu \Delta_v g \nn
\\
&-m\; \textrm{div}_v \left( A[f]g \frac{v}{ \langle v \rangle ^2}\right) - m \left  \langle A[f]   \nabla g ,  \frac{v}{ \langle v \rangle ^2}\right  \rangle \nonumber \\
& +m^2 g \left \langle A[f]  \frac{v}{ \langle v \rangle ^2}  ,  \frac{v}{ \langle v \rangle ^2}\right  \rangle + m \left \langle g \nabla a[f],  \frac{v}{ \langle v \rangle ^2}\right  \rangle \nonumber  
\\
&- 2m \nu  \;\frac{v \cdot \nabla g }{\langle v \rangle ^2} +\nu  (m^2+2m)\; \frac{|v|^2g }{\langle v \rangle ^4} -3\nu m \; \frac{ g }{\langle v \rangle ^2}.
%\nonumber
\end{align}
% {\color{red}{CHANGE H with F}}
\begin{proof} (Proof of Theorem \ref{Thm_Landau_0}.)
Let $f_1$ and $f_2$ be two smooth solutions to (\ref{FPL}) with the same initial data $f_{in}$.  The function 
$$
w := g_1-g_2, \quad g_i = \langle v \rangle ^m f_i,
\qquad i = 1, 2,
$$ 
solves
\begin{align*}%\label{FPLg_U}
\partial_t w + v \cdot \nabla_x w = \;&\textrm{div}_v \left( A[f_1] \nabla w - w \nabla a[f_1]\right)+\textrm{div}_v \left( A[f_1-f_2] \nabla g_2 - g_2 \nabla a[f_1-f_2]\right)  +\nu \Delta_v w  \nonumber \\
&-m\; \textrm{div}_v \left( A[f_1]w \frac{v}{ \langle v \rangle ^2}\right)-m\; \textrm{div}_v \left( A[f_1-f_2]\; g_2 \frac{v}{ \langle v \rangle ^2}\right) \nonumber \\
&- m \left  \langle A[f_1]   \nabla w ,  \frac{v}{ \langle v \rangle ^2}\right  \rangle - m \left  \langle A[f_1-f_2]   \nabla g_2 ,  \frac{v}{ \langle v \rangle ^2}\right  \rangle \nonumber \\
& +m^2 w \left \langle A[f_1]  \frac{v}{ \langle v \rangle ^2}  ,  \frac{v}{ \langle v \rangle ^2}\right  \rangle +m^2 g_2 \left \langle A[f_1-f_2]  \frac{v}{ \langle v \rangle ^2}  ,  \frac{v}{ \langle v \rangle ^2}\right  \rangle  \nonumber \\
& + m w \left \langle  \nabla a[f_1],  \frac{v}{ \langle v \rangle ^2}\right  \rangle  + m \left \langle g_2 \nabla a[f_1-f_2],  \frac{v}{ \langle v \rangle ^2}\right  \rangle \nonumber  \\
&- 2m \nu  \;\frac{v \cdot \nabla w }{\langle v \rangle ^2} +\nu  (m^2+2m)\; \frac{|v|^2w }{\langle v \rangle ^4} -3\nu m \; \frac{ w }{\langle v \rangle ^2} \nonumber\\
%& +  \langle v \rangle ^m  Q_L ( h_1-h_2,M) + \langle v \rangle ^m  Q_L ( M,h_1-h_2) \nonumber\\
=:&\; I_1 + ...+ I_{14}.
\end{align*}

Apply the $\CalM$-operator to the above equation and obtain 
\begin{align*}
  \del_t \CalM w  + v \cdot \nabla_x \CalM w
= [\CalM, \partial_t + v \cdot \nabla_x]w + \sum_{i=1}^{14} \CalM  I_i   ,
\end{align*}
with the transport commutator defined in~\eqref{trasp_comm}
%\begin{align} \label{def:R-1}
%\widehat{R_1} = \CalF\vpran{[\CalM_n, \del_t + v \cdot \nabla_x] w_n}
%= \delta 2^{\beta n} \tfrac{\langle \xi \rangle^{2}}{1 + \delta 2^{\beta n} \int_t ^T \langle \xi +(t- \tau) \eta\rangle^{2} \;d\tau} \CalF\vpran{\CalM_n w_n}.
%\end{align}
%We apply the Fourier transform of the previous equation and 
We multiply the resulting equation by ${{\CalM} w}$ and integrate in $(0, T_0) \times (0,T) \times \T^3 \times \R^3$. This gives
\begin{align} \label{first_est}
\frac{1}{2}\int_0^{T_0} \int_0^{T}   \partial _t \iint_{\T^3 \times \R^3} |{\CalM} w|^2 
%\dx \dv \dt\dT
= & \int_0^{T_0}  \int_0^{T}  {\CalM} w  \iint_{\T^3 \times \R^3} [\CalM, \partial_t + v \cdot \nabla_x]w %\dx \dv \dt\dT 
\nonumber 
\\
&\;+ \int_0^{T_0}  \int_0^{T}  \iint_{\T^3 \times \R^3}  {{\CalM}w} {\CalM} ( {I_1} + ...+  {I_{14}}). %\dx \dv \dt\dT.
\end{align}

Since ${M}(\cdot,\cdot,T)=1$ and $\widehat w(\cdot,\cdot,0)=0$, the left hand side of (\ref{first_est}) reduces to 
$$
\frac{1}{2} \int_0^{T_0} \|\widehat w(T) \|_{L^2}^2 \dT = \frac{1}{2} \int_0^{T_0} \iint_{\T^3 \times \R^3}  w^2 \dx\dv\dt .
$$
The first term on the right hand side of (\ref{first_est}) can be estimated as 
\begin{align*}
 \delta \left(\frac{1}{2}+ \Eps\right)& \sum_{\eta}\int_0^{T_0} \int_0^T \int_{\R^3} \frac{ \langle \xi \rangle^{2}}{{ 1 + \delta \int_t ^T \langle \xi +(t- \tau) \eta\rangle^{2} \dtau}} |{M}\widehat w|^2 %\; d\xi  dtdT 
\\
 \le  & \;\delta  \left(\frac{1}{2}+ \Eps\right)\sum_{\eta} \left( \int_0^{T_0} \int_0^T \int_{\R^3}   
 | \xi |^{2}  |{M}\widehat w|^2 %\; d\xi dtdT 
 + T_0 \int_0^{T_0} \int_{\R^3}  |\widehat w|^2 
 %\; d\xi dt
 \right)
\\
 = & \;\delta  \left(\frac{1}{2}+ \Eps\right) \int_0^{T_0} \int_0^T \iint_{\T^3 \times \R^3} |\nabla_v \;\CalM w|^2 %dxdvdtdT 
 + T_0 \int_0^{T_0} \iint_{\T^3 \times \R^3}   w^2. %\;dxdvdt.
\end{align*}
Moreover, 
$$
 \int_0^{T_0} \int_0^T   \iint_{\T^3 \times \R^3} \CalM w  \; {\CalM} { I_3} 
 %\; dxdv dtdT 
 = - \nu  \int_0^{T_0} \int_0^T  \iint_{\T^3 \times \R^3}   |\nabla_v \; \mathcal{M} w| ^2. %dxdv dtdT.
$$

Since ${M}$ is uniformly bounded from above by $1$, we bound the term ${ I_{14}}$ as follows 
\begin{align*} 
& \quad \,
-3m \nu \int_0^{T_0}\int_0^{T} \iint_{\T^3 \times \R^3} { ( {\CalM} w)  }  {\CalM} { \frac{ w }{\langle v \rangle ^2} } 
\dx\dv\dt\dT
%\;  d\xi d\eta dtdT  
\\
&=3m \nu  \sum_{\eta} \int_0^{T_0}\int_0^{T}\int_{\R^3}  |M|^2 \mathcal{F}\left({ \frac{ w }{\langle v \rangle ^2} }\right) \mathcal{F}\left(w\right) \dxi \dt\dT 
\\
&\le 
3m \nu\int_0^{T_0}\int_0^{T}  \iint_{\T^3 \times \R^3} w^2 \dx\dv\dt\dT 
\leq 
3m \nu T_0 \int_0^{T_0}\iint_{\T^3 \times \R^3} w^2 \dx\dv\dt. 
\end{align*}
We carry out similar estimates for ${I_{13}}$.  To bound ${I_{12}}$ we first note that  
\begin{align*}
\frac{v \cdot \nabla w }{\langle v \rangle ^2} = \textrm{div} \left(\frac{v  w }{\langle v \rangle ^2} \right) - \frac{3w}{\langle v \rangle ^2} +  \frac{w|v|^2 }{\langle v \rangle ^2} =: J_1+J_2+J_3.
\end{align*}
The terms $J_2$ and $J_3$ are similar to $I_{14}$. For $J_1$ we have that 
\begin{align*}
& \quad \,
-2m\nu \int_0^{T_0}\int_0^{T} \iint_{\T^3 \times \R^3}  \CalM w \CalM \left( \textrm{div} \left(\frac{v  w }{\langle v \rangle ^2} \right)\right) \dx\dv\dt\dT 
\\
& = 2m\nu \int_0^{T_0}\int_0^{T} \iint_{\T^3 \times \R^3} \nabla \CalM w \cdot \CalM \left(\frac{v  w }{\langle v \rangle ^2} \right)  \dx\dv\dt\dT 
\\
& \leq \nu^2  \int_0^{T_0}\int_0^{T} \iint_{\T^3 \times \R^3} |\nabla \CalM w|^2 \dx\dv\dt\dT 
+ 4m^2 \int_0^{T_0}\int_0^{T} \iint_{\T^3 \times \R^3}  \left|\frac{v  w }{\langle v \rangle ^2} \right|^2  %\dx\dv\dt\dT 
\\
&\leq 
\nu^2  \int_0^{T_0}\int_0^{T} \iint_{\T^3 \times \R^3} |\nabla \CalM w|^2 \dx\dv\dt\dT 
+  4m^2 T_0 \int_0^{T_0} \iint_{\T^3 \times \R^3}  | w|^2  \dx\dv\dt.
\end{align*}
We now analyze $I_1=\textrm{div}_v \left( A[f_1] \nabla w - w \nabla a[f_1]\right)$.  Following the notation of Lemma \ref{lem:kernels}, we~write 
$$
\textrm{div}_v \left( A[f_1] \nabla w\right) = \textrm{div}_v \left( \vpran{A[f_1]\ast \psi_a} \nabla w\right) + \textrm{div}_v \left( (A[f_1]-A[f_1]\ast \psi_a )\nabla w\right).
$$
The matrix $ A[f_1]\ast \psi_a $ is nonnegative definite. In the term with the smooth diffusion we first integrate by parts and then commute $\mathcal{M}$ with $A[f_1]\ast \psi_a $. This yields  
\begin{align*}
& \quad \,
\int_0^{T_0}\int_0^{T} \iint_{\T^3 \times \R^3} \mathcal{M} w \; \mathcal{M} \; \textrm{div}_v \;\left( A[f_1]\ast \psi_a \nabla w \right)\dx\dv\dt\dT 
\\
&= -\int_0^{T_0}\int_0^{T} \iint_{\T^3 \times \R^3}  \nabla(\mathcal{M} w) \cdot  \; \mathcal{M} \; \;\left( A[f_1]\ast \psi_a \nabla w \right) \dx\dv\dt\dT 
\\
&= -\int_0^{T_0}\int_0^{T} \iint_{\T^3 \times \R^3}  \nabla(\mathcal{M} w) \cdot  \; [\mathcal{M}, A[f_1]\ast \psi_a] \nabla w \dx\dv\dt\dT
\\
& \quad \,
 -\int_0^{T_0}\int_0^{T} \iint_{\T^3 \times \R^3}  \nabla(\mathcal{M} w) \cdot  \; \left( A[f_1]\ast \psi_a \nabla (\mathcal{M}w) \right) \dx\dv\dt\dT.
\end{align*}
The last integral is nonnegative and can be discarded. For the first one, we use  Lemma \ref{lem:kernels} with $h:= A[f_1]\ast \psi_a$, Lemma \ref{lem:M-comm-x-v}, and Young's inequality to get 
\begin{align*}
& \quad \,
-\int_0^{T_0}\int_0^{T} \iint_{\T^3 \times \R^3}  \nabla(\mathcal{M} w) \cdot  \; [\mathcal{M}, A[f_1]\ast \psi_a] \nabla w \dx\dv\dt\dT 
\\
& \leq 
\frac{\nu}{20}\int_0^{T_0}\int_0^{T} \iint_{\T^3 \times \R^3}  |\nabla(\mathcal{M} w)|^2 %dxdvdtdT 
+ \frac{20}{\nu}\int_0^{T_0}\int_0^{T} \iint_{\T^3 \times \R^3} |\mathcal{T} \nabla  (\mathcal{M}w )|^2 
%+  |\mathcal{T}_2 \nabla  (\mathcal{M}w %)|^2\;dxdvdtdT 
\\
& \leq 
\frac{\nu}{20}\int_0^{T_0}\int_0^{T} \iint_{\T^3 \times \R^3}  
|\nabla(\mathcal{M} w)|^2 %dxdvdtdT 
+ \frac{20 c(a) }{\nu}\int_0^{T_0}\int_0^{T} \iint_{\T^3 \times \R^3} w^2 
%+  T  |\nabla  (\mathcal{M}w )|^2\;dxdvdtdT 
\\
&\le \left( \frac{\nu}{20} +  \frac{10 c(a)T_0 }{\nu}\right)  \int_0^{T_0}\int_0^{T} \iint_{\T^3 \times \R^3}  
|\nabla(\mathcal{M} w)|^2 %dxdvdtdT 
+ \frac{20 c(a) T_0}{\nu}\int_0^{T_0}\iint_{\T^3 \times \R^3} 
w^2.
%\;dxdvdt.
\end{align*}
We now look at the non-smooth part of the diffusion: 
\begin{align*}
& \quad \,
\int_0^{T_0}\int_0^{T} \iint_{\T^3 \times \R^3} 
\mathcal{M} w \; \mathcal{M}\textrm{div}_v \;\left( (A[f_1]-A[f_1]\ast \psi_a) \nabla w \right)\dx\dv\dt\dT
\\
&= - \int_0^{T_0}\int_0^{T} \iint_{\T^3 \times \R^3}  \nabla (\mathcal{M} w) \cdot \mathcal{M} \;\left( (A[f_1]-A[f_1]\ast \psi_a) \nabla w \right)\dx\dv\dt\dT
\\
&= - \int_0^{T_0}\int_0^{T} \iint_{\T^3 \times \R^3}  \nabla (\mathcal{M} w) \cdot \textrm{div}_v \mathcal{M} \;\left( (A[f_1]-A[f_1]\ast \psi_a)  w \right) \dx\dv\dt\dT
\\
& \quad \,
- \int_0^{T_0}\int_0^{T} \iint_{\T^3 \times \R^3}  \nabla (\mathcal{M} w) \cdot \mathcal{M} \;\left( (\nabla a[f_1]-\nabla a[f_1]\ast \psi_a)  w \right)\dx\dv\dt\dT =: B_1+B_2.
\end{align*}
In $B_1$, Young's inequality and (\ref{lapl}) yield
\begin{align*}
    |B_1| 
& \leq   
 \frac{\nu}{20} \int_0^{T_0}\int_0^{T} \iint_{\T^3 \times \R^3}  |\nabla(\mathcal{M} w)|^2 \dx\dv\dt\dT 
\\
& \quad \, 
    + \frac{20}{\nu} \sum_\eta \int_0^{T_0}\int_0^{T} \int_{\R^3}  |\xi|^2 M^2 |  \mathcal{F}({(A[f_1]- A[f_1]\ast \psi_a) w})|^2  \dxi \deta \dt\dT 
\\
& \leq  
   \frac{\nu}{20} \int_0^{T_0}\int_0^{T} \iint_{\T^3 \times \R^3}  |\nabla(\mathcal{M} w)|^2 \dx\dv\dt\dT 
\\
    & \quad \,
    +  \frac{40}{\nu \Eps \delta} \int_0^{T_0} \iint_{\T^3 \times \R^3}  |  (A[f_1]- A[f_1]\ast \psi_a) w|^2  \dx \dv \dt
\\
& \leq  
\frac{\nu}{20} \int_0^{T_0}\int_0^{T} \iint_{\T^3 \times \R^3}  |\nabla(\mathcal{M} w)|^2 \dx\dv\dt\dT 
\\
& \quad \, 
+  \frac{40}{\nu \Eps \delta}\|  (A[f_1]- A[f_1]\ast \psi_a)\|_{L^{\infty}_{t,x,v}}^2 \int_0^{T_0} \iint_{\T^3 \times \R^3}  w^2  \dx \dv \dt ,
\end{align*}
with the abuse of notation that $|  \mathcal{F}({(A[\cdot]- A[\cdot]\ast \psi_a) w})|^2$ stands for $\sum_{i=1}^3 |\mathcal{F}(A_i)|^2$ with $A_i$ the $i$-th column of the matrix $(A[\cdot]- A[\cdot]\ast \psi_a)w$.  Since $A[f_1]\ast \psi_a-A[f_1]$ converges to zero uniformly  in $(x,v,t)$  as $a \to 0$ for $f_1$ continuous and decaying fast enough for large velocity, we can chose  $a$ small enough such that 
$$
 \frac{40}{\nu \Eps \delta} \|  (A[f_1]- A[f_1]\ast \psi_a)\|_{L^{\infty}_{t,x,v}}^2 \le  \frac{1}{8}. 
$$
 Summarizing, 
 \begin{align*}
    |B_1|  \; \le \; &    \frac{\nu}{20} \int_0^{T_0}\int_0^{T} \iint_{\T^3 \times \R^3}  |\nabla(\mathcal{M} w)^2 \dx\dv\dt\dT  + \frac{1}{8} \int_0^{T_0} \iint_{\T^3 \times \R^3}  w^2  \dx \dv \dt.
 \end{align*}
In $B_2$ we apply Young's inequality and ${M}\le 1$ to get 
\begin{align*}
    |B_2| 
& \leq  
\frac{\nu}{20}\; \int_0^{T_0}\int_0^{T} \iint_{\T^3 \times \R^3}  |\nabla(\mathcal{M} w)|^2 %dxdvdtdT 
+  \frac{20}{\nu} \int_0^{T_0}\int_0^{T} \iint_{\T^3 \times \R^3} 
\abs{\nabla a[f_1]-\nabla a[f_1]\ast \psi_a}^2  w^2   %dxdvdtdT 
\\
&\leq  
\frac{\nu}{20} \int_0^{T_0}\int_0^{T} \iint_{\T^3 \times \R^3}  |\nabla(\mathcal{M} w)|^2 %dxdvdtdT 
+  \frac{20 \;T_0 }{\nu} \| \nabla a[f_1]-\nabla a[f_1]\ast \psi_a\|_{L^\infty_{t,x,v}}^2 \int_0^{T_0} \iint_{\T^3 \times \R^3}   w^2   %dxdvdt
\\
& \leq  
\frac{\nu}{20} \int_0^{T_0}\int_0^{T} \iint_{\T^3 \times \R^3}  
|\nabla(\mathcal{M} w)|^2 %dxdvdtdT 
+  \frac{20 c(m) T_0 }{\nu} \| g_1\|_{L^\infty_{t,x}(L^4_v)}^2 \int_0^{T_0} \iint_{\T^3 \times \R^3}   w^2.   %dxdvdt.
\end{align*}
The remaining part of $I_1$, namely $-\textrm{div}_v \left( w \nabla a[f_1]\right)$, can be estimated as $B_2$: 
\begin{align*}
& \quad \,
\int_0^{T_0}\int_0^{T} \iint_{\T^3 \times \R^3} \mathcal{M} w \; \mathcal{M}\textrm{div}_v \left( w \nabla a[f_1]\right) \dx\dv\dt\dT 
\\
& \leq  
\frac{\nu}{20}\; \int_0^{T_0}\int_0^{T} \iint_{\T^3 \times \R^3}  |\nabla(\mathcal{M} w)|^2 %dxdvdtdT  
+  \frac{20}{\nu} \int_0^{T_0}\int_0^{T} \iint_{\T^3 \times \R^3}   
|\nabla a[f_1]|^2  w^2   %dxdvdtdT 
\\
& \leq  
\frac{\nu}{20} \int_0^{T_0}\int_0^{T} \iint_{\T^3 \times \R^3}  
|\nabla(\mathcal{M} w)|^2 %dxdvdtdT 
+  \frac{20 c(m) T_0 }{\nu} \| g_1\|_{L^\infty_{t,x}(L^4_v)}^2 \int_0^{T_0} \iint_{\T^3 \times \R^3}   w^2.  % dxdvdt. 
\end{align*}
All the above yields 
\begin{align*}
    \int_0^{T_0}\int_0^{T} \iint_{\T^3 \times \R^3} \mathcal{M} w \; \mathcal{M}I_1 %\dx\dv\dt\dT 
& \leq  
\frac{3 \nu}{20}\; \int_0^{T_0}\int_0^{T} \iint_{\T^3 \times \R^3}  
|\nabla(\mathcal{M} w)|^2 \dx\dv\dt\dT 
\\
& \quad \,
 + \left( \frac{20 c(m) T_0 }{\nu} \| g_1\|_{L^\infty_{t,x}(L^4_v)}^2 + \frac{1}{8}\right) \int_0^{T_0} \iint_{\T^3 \times \R^3}   w^2   \dx\dv\dt. 
\end{align*}
We now look at $I_2 = \textrm{div}_v \left(A[f_1-f_2]\nabla g_2 \right)- \textrm{div}_v (g_2 \nabla a[f_1-f_2])$; for the second part we have 
\begin{align}
& \quad \, 
\int_0^{T_0} \int_0^{T} \iint_{\T^3 \times \R^3}  \mathcal{M} w \;\textrm{div}_v ( \mathcal{M} g_2 \nabla a[f_1-f_2])q \dx\dv\dt\dT \nn
\\
&\leq \frac{\nu}{20}\int_0^{T_0} \int_0^{T} \iint_{\T^3 \times \R^3}  |\nabla(\mathcal{M} w)|^2 %\; dxdvdtdT 
+ \frac{20}{\nu} \sum_\eta \int_0^{T_0} \int_0^{T} \int_{\R^3}  {M}^2 \abs{\mathcal{F} (g_2 \nabla a[f_1-f_2]}^2 \nn %\;d\xi d\eta dtdT \nonumber 
\\
&\leq 
\frac{\nu}{20}\int_0^{T_0} \int_0^{T} \iint_{\T^3 \times \R^3}  |\nabla(\mathcal{M} w)|^2 %\; dxdvdtdT 
+  \frac{20}{\nu}\int_0^{T_0} \int_0^{T} \iint_{\T^3 \times \R^3}   g_2^2 |\nabla a[f_1-f_2]|^2 %\;dx dv dtdT  
\nonumber 
\\
&\leq \frac{\nu}{20}\int_0^{T_0} \int_0^{T} \iint_{\T^3 \times \R^3}  |\nabla(\mathcal{M} w)|^2 %\; dxdvdtdT 
+ \frac{20}{\nu}\|g_2\|^2_{L_{x,t}^\infty (L^3_v)}
 \int_0^{T_0} \int_0^{T}\int_{\T^3} \left(  \int_{\mathbb{R}^3 }|\nabla a[f_1-f_2] |^6 \;dv \right)^{1/3} %\;dxdtdT    
\nonumber 
\\
&\leq 
\frac{\nu}{20}\int_0^{T_0} \int_0^{T} \iint_{\T^3 \times \R^3}  |\nabla(\mathcal{M} w)|^2 %\; dxdvdtdT 
+ \frac{20 c(m)}{\nu}\|g_2\|^2_{L_{x,t}^\infty (L^3_v)} 
 \int_0^{T_0} \int_0^{T}\iint_{\T^3 \times \R^3} |f_1-f_2 |^2 %\;dvdxdtdT  
 \nonumber 
\\
&\leq 
\frac{\nu}{20} \int_0^{T_0}\int_0^{T} \iint_{\T^3 \times \R^3}  
|\nabla(\mathcal{M} w)|^2 %\; dxdvdtdT     
+ \frac{20 c(m)  \|g_2\|^2_{L_{x,t}^\infty (L^3_v)}}{\nu}T_0  \int_0^{T_0}\iint_{\T^3 \times \R^3}  | w|^2, \label{I_2_partII} %\;dvdxdt,  
%\nonumber
\end{align}
using (\ref{nablaa_L6}) and ${M}\le 1$. For the diffusion term, we apply the same decomposition between smooth and less smooth parts as in $I_1$, this time for $\nabla g_2$: 
$$
\textrm{div}_v \left( A[f_1-f_2] \nabla g_2\right) = \textrm{div}_v \left( A[f_1-f_2] \nabla g_2 \ast \psi_a \right) + \textrm{div}_v \left( (A[f_1-f_2] (\nabla g_2 - \nabla g_2 \ast \psi_a )\right).
$$
We estimate the smooth part as 
\begin{align*}
& \quad \,
   \int_0^{T_0} \int_0^{T} \iint_{\T^3 \times \R^3}  \mathcal{M} w \; \mathcal{M} \textrm{div}_v \left( A[f_1-f_2] \nabla g_2 \ast \psi_a \right) \dx\dv\dt\dT  
\\
& = - \int_0^{T_0} \int_0^{T} \iint_{\T^3 \times \R^3}  \nabla(\mathcal{M} w) \cdot  \mathcal{M} \left( A[f_1-f_2] g_2 \ast \nabla \psi_a \right) \dx\dv\dt\dT 
\\
& \leq  
\frac{\nu}{20}\int_0^{T_0} \int_0^{T} \iint_{\T^3 \times \R^3}  |\nabla(\mathcal{M} w)|^2 %\; dxdvdtdT 
+  \frac{20}{\nu} \sum_\eta \int_0^{T_0} \int_0^{T} \int_{\R^3} M^2 | \mathcal{F}( A[f_1-f_2] g_2 \ast \nabla \psi_a)|^2 
%d\eta d\xi dtdT
\\
&\leq 
\frac{\nu}{20}\int_0^{T_0} \int_0^{T} \iint_{\T^3 \times \R^3}  |\nabla(\mathcal{M} w)|^2 %\; dxdvdtdT 
+  \frac{20}{\nu} \int_0^{T_0} \int_0^{T} \iint_{\T^3 \times \R^3}  
|  A[f_1-f_2] g_2 \ast \nabla \psi_a|^2,
%dxdv dtdT,
\end{align*}
using once more that $M\le 1$. In the last integral, we use the smoothness of $g_2 \ast \nabla \psi_a$ to obtain 
\begin{align*}
   \int_0^{T_0} \int_0^{T}& \iint_{\T^3 \times \R^3}  |  A[f_1-f_2] g_2 \ast \nabla \psi_a|^2 \dx\dv \dt\dT
\\
& \leq   \int_0^{T_0} \int_0^{T} \int_{\T^3} \left(\| A[f_1-f_2] \|^2_{L^\infty_v} \int_{\R^3}  |g_2 \ast \nabla \psi_a|^2 \dv\right) \dx\dt\dT
\\
& \leq  
c(a) c(m) \int_0^{T_0} \int_0^{T} \int_{\T^3} \| w \|^2_{L^2_v} \|g_2\|^2_{L^1_v} \dx\dt\dT
\\
& \le  
c(a) c(m) \|g_2\|^2_{L^\infty_{t,x}L^1_v}  \int_0^{T_0} \int_0^{T} \iint_{\T^3 \times \R^3} w^2 \dx\dv\dt\dT
\\
& \le  
c(a) c(m)  T_0 \|g_2\|^2_{L^\infty_{t,x}L^1_v}  \int_0^{T_0} \iint_{\T^3 \times \R^3} w^2 \dx\dv\dt. 
\end{align*}
Hence we get 
\begin{align*}
& \quad \,
    \int_0^{T_0} \int_0^{T} \iint_{\T^3 \times \R^3}   \mathcal{M} w \; \mathcal{M} \textrm{div}_v \left( A[f_1-f_2] \nabla g_2 \ast \psi_a \right) \; \dx\dv\dt\dT 
\\
&\leq  
\frac{\nu}{20}\int_0^{T_0} \int_0^{T} \iint_{\T^3 \times \R^3}  
|\nabla(\mathcal{M} w)|^2 %\; dxdvdtdT 
+  \frac{20}{\nu}  c(a) \, T_0 \, \|g_2\|^2_{L^\infty_{t,x}L^1_v}  \int_0^{T_0}  \iint_{\T^3 \times \R^3} w^2. %dxdvdt.
\end{align*}
The less smooth part can be estimated as 
\begin{align*}
   \int_0^{T_0} \int_0^{T}& \iint_{\T^3 \times \R^3}    \mathcal{M} w \; \mathcal{M} \textrm{div}_v \left( A[f_1-f_2](\nabla g_2 - \nabla g_2 \ast \psi_a )\right) \; \dx\dv\dt\dT  
\\
   =& - \int_0^{T_0} \int_0^{T} \iint_{\T^3 \times \R^3}  \nabla(\mathcal{M} w) \cdot  \mathcal{M} \left( A[f_1-f_2] (\nabla g_2 - \nabla g_2 \ast \psi_a) \right) \dx\dv\dt\dT 
\\
&= - \int_0^{T_0} \int_0^{T} \iint_{\T^3 \times \R^3}  \nabla(\mathcal{M} w) \cdot  \textrm{div}_v ( \mathcal{M} \left( A[f_1-f_2] ( g_2 -g_2 \ast \psi_a) \right) \dx\dv\dt\dT 
\\
& +  \int_0^{T_0} \int_0^{T} \iint_{\T^3 \times \R^3}  \nabla(\mathcal{M} w) \cdot   \mathcal{M} \vpran{ \nabla a[f_1-f_2] \; ( g_2 -g_2 \ast \psi_a)}  \dx\dv\dt\dT. 
\end{align*}
The second term can be estimated as in (\ref{I_2_partII}). For the first, we apply Young's inequality and (\ref{int_xi_M2}) to get 
\begin{align*}
    - \int_0^{T_0}& \int_0^{T} \iint_{\T^3 \times \R^3}  \nabla(\mathcal{M} w) \cdot  \textrm{div}_v ( \mathcal{M} \left( A[f_1-f_2] ( g_2 -g_2 \ast \psi_a) \right) \dx\dv\dt\dT 
\\
    \le  \; & \frac{\nu}{20}\int_0^{T_0} \int_0^{T} \iint_{\T^3 \times \R^3}  |\nabla(\mathcal{M} w)|^2 \dx\dv\dt\dT  
\\
   &  + \frac{40}{\nu \Eps \delta } \int_0^{T_0} \iint_{\T^3 \times \R^3} | A[f_1-f_2] ( g_2 -g_2 \ast \psi_a)|^2 \dx\dv\dt 
\\
 \le  \; & \frac{\nu}{20}\int_0^{T_0} \int_0^{T} \iint_{\T^3 \times \R^3}  |\nabla(\mathcal{M} w)|^2 \dx\dv\dt\dT  
\\
   &  +  \frac{20 c(m)}{\nu \Eps \delta} \|  (g_2 -g_2 \ast \psi_a)\|^2_{L^\infty_{t,x}(L^2_v)} \int_0^{T_0} \iint_{\T^3 \times \R^3} w^2 \dx\dv\dt    
\\
    \le  \; & \frac{\nu}{20}\int_0^{T_0} \int_0^{T} \iint_{\T^3 \times \R^3}  |\nabla(\mathcal{M} w)|^2 %\dx\dv\dt\dT 
+  \frac{1}{8 } \int_0^{T_0} \iint_{\T^3 \times \R^3} w^2, %\;dxdvdt,
\end{align*}
for $a$ small enough, thanks to the uniform convergence of $g_2 -g_2 \ast \psi_a$ to zero as $a \to 0$.
Summarizing, we have 
\begin{align*}
    \int_0^{T_0}\int_0^{T} \iint_{\T^3 \times \R^3} \mathcal{M} w \; \mathcal{M}I_2 \dx\dv\dt\dT 
\le & \;  
\frac{ \nu}{5}\; \int_0^{T_0}\int_0^{T} \iint_{\T^3 \times \R^3}  |\nabla(\mathcal{M} w)|^2 \dx\dv\dt\dT 
\\
    &+ \left(c \, T_0+ \frac{1}{8}\right) \int_0^{T_0} \iint_{\T^3 \times \R^3}   w^2  \dx\dv\dt,
\end{align*}
where $c$ depends on $\nu$, some $L^\infty_{t,x}(L^2_v)$ norms of $g_i$ and other universal constants. 

%%%%%%%%%%%%%%%%%%%%
%%%%%%%%%%%%%%%%%%%%
%%%%%%%%%%%%%%%%%%%%%%

The estimates of ${I_4}$ to ${I_{11}}$ are similar, and we only sketch them briefly. For ${I_4}$ we have 
\begin{align*}
   \int_0^{T_0}\int_0^{T} \iint_{\T^3 \times \R^3}  \CalM w \CalM I_4 %dxdvdtdT  
= m  \int_0^{T_0}\int_0^{T} \iint_{\T^3 \times \R^3}  \nabla (\CalM w ) \CalM\left(\frac{A[f_1]v  w }{\langle v \rangle ^2} \right) %dxdvdtdT 
\\
\le  
\frac{\nu}{10}  \int_0^{T_0}\int_0^{T} \iint_{\T^3 \times \R^3}  |\nabla (\CalM w )| ^2 %dxdvdtdT 
+ \frac{10 \, c(m) m^2 \, T_0 \, \|g_1\|^2_{L^\infty_{t,x}L^2_v}}{\nu}  \int_0^{T_0} \iint_{\T^3 \times \R^3}  | {w} |^2, %dxdv dt,
\end{align*}
and similar estimates hold for the term with $I_5$:  % FROM HERE! 
\begin{align*}
& \quad \,
\int_0^{T_0}\int_0^{T} \iint_{\T^3 \times \R^3} 
 {({\CalM}w)} {\CalM} I_5 \dx\dv\dt\dT
\\ 
& \le \;  \frac{\nu}{10}  \int_0^{T_0}\int_0^{T} \iint_{\T^3 \times \R^3}  
|\nabla (\CalM w )| ^2 %\;dxdvdtdT 
+ \; \frac{10 \, m^2}{\nu}  \int_0^{T_0}  \int_0^T \iint_{\T^3 \times \R^3} 
|A[f_1-f_2]  g_2|^2 %\; dxdv dt dT 
\\
&\le \;  \frac{\nu}{10}  \int_0^{T_0}\int_0^{T} \iint_{\T^3 \times \R^3}  |\nabla (\CalM w )| ^2 %\;dxdvdtdT 
+ \; \frac{10 \, c(m)m^2 \, T_0 \, \|g_2\|^2_{L^\infty_{t,x}L^2_v}}{\nu}  \int_0^{T_0} \iint_{\T^3 \times \R^3}  |w|^2. %dxdvdt.
\end{align*}
For $I_{11}$ we also use the fact that $ {M} \le 1$ to get 
\begin{align*}
& \quad \,
\int_0^{T_0}\int_0^{T} \iint_{\T^3 \times \R^3} { ( {\CalM} w)  } \CalM {I_{11}} \dx\dv\dt\dT  
\\
&\le 
\int_0^{T_0}\int_0^{T} \iint_{\T^3 \times \R^3} |w| ^2 \dx\dv\dt\dT 
+ \int_0^{T_0}\int_0^{T} \iint_{\T^3 \times \R^3} \left| I_{11}\right|^2 \dx\dv\dt\dT
\\
&\le 
T_0 \int_0^{T_0} \iint_{\T^3 \times \R^3}  | w| ^2 \dx\dv\dt  
+  T_0 \int_0^{T_0} \iint_{\T^3 \times \R^3}  g_2^2 | \nabla a[f_1-f_2]|^2 \dx\dv\dt 
\\
&\le   
T_0 \int_0^{T_0} \iint_{\T^3 \times \R^3} |w| ^2 %dxdv dt 
+  T_0 \int_0^{T_0} \int_{\mathbb{R}^3} \|g_2\|_{L_v^3}^2\left( \int_{\mathbb{R}^3} | \nabla a[f_1-f_2]|^6 \dv \right)^{1/3} \dx\dt
\\
&\le   
T_0 \vpran{1+ c(m) \|g_2\|^2_{L^\infty_{x,t}L_v^3}} \int_0^{T} \iint_{\T^3 \times \R^3} w^2 \dx\dv\dt.
\end{align*}
Similarly, for the term with $I_9$ we have
\begin{align*}
\int_0^{T_0}\int_0^{T} & \iint_{\T^3 \times \R^3} {({\CalM} w)  } \CalM {I_{9}} \dx\dv\dt\dT    
\\
&\le 
T_0 \int_0^{T_0} \iint_{\T^3 \times \R^3} |w| ^2 \dx\dv\dt 
+  T_0 \int_0^{T_0} \iint_{\T^3 \times \R^3}  g_2^2 | A[f_1-f_2]|^2 \dx\dv\dt 
\\
&\le 
T_0 \int_0^{T_0} \iint_{\T^3 \times \R^3} |w| ^2 \dx\dv\dt 
+  T_0 c(m) \|g_2\|^2_{L^\infty_{x,t}L_v^2} \int_0^{T_0} \iint_{\T^3 \times \R^3} w^2 \dx\dv\dt
\\
&\le T_0 \vpran{1+c(m) \|g_2\|^2_{L^\infty_{t,x}L_v^2}} \int_0^{T_0} \iint_{\T^3 \times \R^3} w^2 \dx\dv\dt,
\end{align*}
using (\ref{A_sup}) to estimate $A[f_1-f_2]$. 
We rewrite $I_6$ and $I_7$ as 
\begin{align*}
-\left  \langle A[f_1]   \nabla w ,  \frac{v}{ \langle v \rangle ^2}\right \rangle = &\;   {{w\; \textrm{div}_v\left( A[f_1] \frac{v}{ \langle v \rangle ^2}   \right)}}
- \textrm{div} \left(  w A[f_1]    \frac{v}{ \langle v \rangle ^2} \right) 
\\
=&:\; w I_{6,1}- I_{6,2},
\\
 -  \left  \langle A[f_1-f_2]   \nabla g_2 ,  \frac{v}{ \langle v \rangle ^2}\right  \rangle = &\;   {{g_2\; \textrm{div}_v\left( A[f_1-f_2] \frac{v}{ \langle v \rangle ^2}   \right)}}
- \textrm{div} \left(  g_2 A[f_1-f_2]    \frac{v}{ \langle v \rangle ^2} \right) 
\\
=\; g_2 \bigg(\nabla a [f_1-f_2] & \frac{v}{ \langle v \rangle ^2} + Tr(A[f_1-f_2] \nabla \frac{v}{ \langle v \rangle ^2})\bigg) - \textrm{div} \left(  g_2 A[f_1-f_2]    \frac{v}{ \langle v \rangle ^2} \right) 
\\
=: \;  g_2 I_{7,1} + g_2 & \, I_{7,2} -  I_{7,3}.
\end{align*}
The term $I_{6,2}$ is similar to $I_{4}$.  For $I_{6,1}$ we use the bound $ {M} \le 1$ and get
\begin{align*}
\int_0^{T_0}\int_0^{T} & \iint_{\T^3 \times \R^3} \abs{\CalM w} |{\CalM}  \vpran{w I_{6,1}}| %dxdv dtdT  
\leq 
\left(\int_0^{T_0}\int_0^{T} \iint_{\T^3 \times \R^3} | w| ^2 \right)^{1/2} 
\left( \int_0^{T_0}\int_0^{T} \iint_{\T^3 \times \R^3}  |{w I_{6,1}}|^2 %dxdv
\right)^{1/2}   
\\
& \le  
\|I_{6,1}\|_{L^\infty_{t,x,v}} \int_0^{T_0}\int_0^{T} \iint_{\T^3 \times \R^3}  |w|^2 \dx\dv\dt\dT  
\\
&\le \|A[f_1] + \nabla a[f_1]\|_{L^\infty_{t,x,v}} T_0   \int_0^{T_0}\iint_{\T^3 \times \R^3}  |w| ^2 \dx\dv\dt
\\
&\le c(m) \|g_1\|_{L^\infty_{t,x}(L_v^1\cap L_v^4)} T_0 \int_0^{T_0}\iint_{\T^3 \times \R^3}  |w| ^2 \dx\dv\dt. 
\end{align*}
The term $g_2 I_{7,1}$ can be estimated as $I_{11}$, $g_2 I_{7,2}$ as $I_9$, and $I_{7,3}$ as $I_5$.  
Finally, $I_8$ and $I_{10}$ can be estimated as $w I_{6,1}$. Summarizing all the estimates for $I_1,...,I_{14}$ we get 
\begin{align*}
\left(\frac{1}{4}-cT_0\right) \int_0^{T_0} \iint_{\T^3 \times \R^3} w^2 \dx\dv\dt 
\le 
-\left(\frac{\nu}{10}-\delta\left(\frac{1}{2}+\Eps\right)\right) \int_0^{T_0}\int_0^{T} \iint_{\T^3 \times \R^3} |\nabla(\CalM w)|^2, %\;dxdvdtdT  
\end{align*}
for $c$ a constant depending on $m$, $\nu$, $\Eps$, $\delta$ and $\|g_i\|_{L^\infty_{t,x}(L^1_v\cap L^4_v)}$. We pick $\delta$ small enough such that $\left(\frac{\nu}{10}-\delta\left(\frac{1}{2}+\Eps\right)\right)\le 0$ and then $T_0$ small enough so that $\frac{1}{4}-cT_0\ge 0$ to conclude that $w = 0$ for all times $t\in [0,T_0]$. 
\end{proof}

Finally, we show that if the initial data is small compared to the viscosity term, then the continuity of solutions is not needed and uniqueness holds for merely bounded solutions. 

\begin{proof}  (Proof of Theorem \ref{Thm_Landau}.)
We start with (\ref{first_est}). The terms $I_3, ... , I_{14}$ are estimated in the same way as in Theorem  \ref{Thm_Landau_0}. Because we work with bounded solutions, we need to use different methods for $I_1$ and $I_2$. 
Here, we rewrite $I_1$ as 
$$
\textrm{div}_v \left( A[f_1] \nabla w - w \nabla a[f_1]\right) 
= \nabla \otimes \nabla : ( A[f_1] w)  - 2 \textrm{div}_v (w \nabla a[f_1]) ,
$$
and use (\ref{int_xi_M2}), (\ref{A_sup}) and  (\ref{nablaA_sup}) to bound ${{\CalM} w} {\CalM} { I_1}$ using Young's inequality:
\begin{align*}
\int_0^{T_0} & \int_0^{T} \iint_{\T^3 \times \R^3} {{\CalM} w} {\CalM} {\nabla \otimes \nabla :( A[f_1] w)} \dx\dv\dt\dT 
\\
&\leq 
\frac{\nu}{10}\int_0^{T_0}\int_0^{T} \iint_{\T^3 \times \R^3} 
|\nabla(\CalM w)|^2 %\;dxdv dt dT  
+ \frac{10}{\nu} \sum_{\eta}\int_0^{T_0}\int_0^{T} \int_{\R^3}  |\xi {M}|^2 \abs{\widehat{A[f_1] w}}^2 %\dxi\dt\dT
\\
&\leq   
\frac{\nu}{10}\int_0^{T_0}\int_0^{T} \iint_{\T^3 \times \R^3} 
|\nabla(\CalM w)|^2 %\;dxdv dt dT 
+ \frac{20}{ \delta \Eps \nu} \int_0^{T_0}\iint_{\T^3 \times \R^3}  |A[f_1] w|^2 %\;dvdxdt  
\\
&\leq   
\frac{\nu}{10}\int_0^{T_0}\int_0^{T} \iint_{\T^3 \times \R^3} 
|\nabla(\CalM w)|^2 %\;dxdv dt dT 
+ 20 c(m)  \frac{\|g_1\|^2_{L_{t,x}^\infty L^2_v}}{\delta \epsilon \nu} 
\int_0^{T_0} \iint_{\T^3 \times \R^3}|w|^2, %\;dvdxdt , 
% &=: I_{1,1} +  I_{1,2} +  I_{1,3}.
\end{align*} 
and
\begin{align*}
- 2 \int_0^{T_0} & \int_0^{T} \iint_{\T^3 \times \R^3} 
\CalM w \CalM \textrm{div}_v({w \nabla a[f_1]}) \dx\dv\dt\dT 
\\
& \le  
\frac{\nu}{10}\int_0^{T_0}\int_0^{T} \iint_{\T^3 \times \R^3} |\nabla({M} w)| ^2 %\;dxdv dt dT  
+ \frac{10}{\nu} \int_0^{T_0}\int_0^{T} \iint_{\T^3 \times \R^3} 
w^2 |\nabla{a[f_1]}|^2 %dx dv dt dT 
\\
&\le   
\frac{\nu}{10}\int_0^{T_0}\int_0^{T} \iint_{\T^3 \times \R^3} 
|\nabla(\CalM w)|^2 %\;dxdv dt dT  
+ \frac{10 \, c(m) \, \|g_1\|^2_{L_{t,x}^\infty L^4_v}}{\nu} T_0 \int_0^{T_0}\iint_{\T^3 \times \R^3}  |w|^2 .%\;dvdxdt. 
\end{align*}
\begin{comment}
\begin{align*}
\int_0^{T_0}\int_0^{T} & \int \overline{( \mathcal{M}\widehat w)} \mathcal{M}  |\xi \otimes \xi| \widehat{A[f_1] w}\;d\xi  d\eta dt dT -2 i \int_0^{T_0}\int_0^{T}  \int \overline{ ( \mathcal{M}\widehat w)  }  \mathcal{M}  \xi   \widehat{w \nabla a[f_1]}\;d\xi  d\eta  dtdT  \\
& \le  \int_0^{T_0} \int  \widehat w \widehat{A[f_1] w}  \int_t^{T_0}  |\xi|^2| \mathcal{M}^2 \; dT\;d\eta \;d\xi \;dt  + \theta\int_0^{T_0}\int_0^{T}  \int |\xi|^2| \mathcal{M}\widehat w| ^2d\xi d\eta dt dT\\
&  \hspace{3cm} + \frac{1}{\theta} \int_0^{T_0}\int_0^{T}  \int w^2 |\nabla{a[f_1]}|^2 dx dv dt dT  \\ 
& \le 2\; \left(\frac{\|g_1\|^2_{L_{x,t}^\infty (L^2_v)}} {\delta \Eps}  + \frac{2 \theta}{\delta \Eps}  + \frac{T_0 \|g_1\|^2_{L_{x,t}^\infty (L^4_v)}}{\theta} \right)  \int_0^{T_0}  \int  | \widehat{w} | ^2 d\xi d\eta dt.
\end{align*} 
\end{comment}
Similarly, we rewrite $I_2$ as 
$$
I_2 = \nabla \otimes \nabla : ( A[f_1-f_2] g_2)  - 2 \textrm{div}_v (g_2 \nabla a[f_1-f_2]),
$$
and estimate
\begin{align*}
\int_0^{T_0} & \int_0^{T} \iint_{\T^3 \times \R^3} {{\CalM} w} {\CalM} {\nabla \otimes \nabla : ( A[f_1-f_2] g_2)} \dx\dv\dt\dT  
\\
&\leq  
\frac{\nu}{10}\int_0^{T_0}\int_0^{T} \iint_{\T^3 \times \R^3} 
|\nabla(\CalM w)| ^2 %\;dxdv dt dT   
+ \frac{10}{\nu} \sum_{\eta}\int_0^{T_0}\int_0^{T} \int_{\R^3}  |\xi|^2 {M}^2  |\CalF \vpran{A[f_1-f_2] g_2}|^2 %d\xi  dt dT
\\
&\le  
\frac{\nu}{10}\int_0^{T_0}\int_0^{T} \iint_{\T^3 \times \R^3} |\nabla(\CalM w)| ^2 %\;dxdv dt dT    
+ \frac{20}{ \delta \Eps \nu} \int_0^{T_0}\iint_{\T^3 \times \R^3}  |A[f_1-f_2] g_2|^2 %\;dvdxdt  
\\
&\le   
\frac{\nu}{10}\int_0^{T_0}\int_0^{T} \iint_{\T^3 \times \R^3} |\nabla(\CalM w)| ^2 %\;dxdv dt dT    
+ 20 c(m) \frac{\|g_2\|^2_{L_{t,x}^\infty L^2_v}}{ \delta \Eps \nu} \int_0^{T_0}\iint_{\T^3 \times \R^3} |w|^2, %\;dvdxdt  ,
% &=: I_{1,1} +  I_{1,2} +  I_{1,3}.
\end{align*} 
and 
\begin{align*}
& \quad \, 2 \int_0^{T_0} \int_0^{T} \iint_{\T^3 \times \R^3}  {( {\CalM} w)  } {\CalM}{\textrm{div}_v(g_2 \nabla a[f_1-f_2])}  \dx\dv\dt\dT  
\\
& \le  
\frac{\nu}{10}\int_0^{T_0}\int_0^{T} \iint_{\T^3 \times \R^3} |\nabla(\CalM w)| ^2 %\;dxdv dt dT 
+ \frac{10}{\nu} \int_0^{T_0}\int_0^{T} \iint_{\T^3 \times \R^3} g_2^2 \, |\nabla{a[f_1-f_2]}|^2 %dx dv dt dT 
\\
&\le  
\frac{10}{\nu} \int_0^{T_0}\int_0^{T} \int_{\T^3}  \left(\int_{\mathbb{R}^3 }|\nabla a[f_1-f_2] |^6 \dv \right)^{1/3}  \left(  \int_{\mathbb{R}^3 } g_2^3 \dv \right)^{2/3} %dxdtdT 
\\
& \quad \,
+ \frac{\nu}{10}\int_0^{T_0}\int_0^{T} \iint_{\T^3 \times \R^3} |\nabla(\CalM w)| ^2 %\;dxdv dt dT   
\\
&\le  
\frac{\nu}{10}\int_0^{T_0}\int_0^{T} \iint |\nabla(\CalM w)|^2 %\;dxdv dt dT  
+ \frac{10 c(m) \|g_2\|^2_{L_{x,t}^\infty (L^3_v)}}{\nu} T_0 \int_0^{T_0} \iint_{\T^3 \times \R^3} | f_1-f_2|^2 %\;dvdxdt 
\\
&\le 
\frac{\nu}{10}\int_0^{T_0}\int_0^{T} \iint_{\T^3 \times \R^3} |\nabla(\CalM w)| ^2 %\;dxdv dt dT   
+ \frac{10 c(m)\|g_2\|^2_{L_{t,x}^\infty L^3_v}}{\nu}T_0  \int_0^{T_0} \iint_{\T^3 \times \R^3}  | w|^2, %\;dvdxdt, 
\end{align*}
using (\ref{int_xi_M2}), (\ref{A_sup}), (\ref{nablaA_sup}) and~\eqref{nablaa_L6}. 
%Hardy-Littlewood-Sobolev inequality to estimate the $L^6_v$-norm of $\nabla a[\cdot]$. 
Summarizing, we have that
\begin{align*}
& \quad \,
\frac{1}{2} \int_0^{T_0}\iint_{\T^3 \times \R^3}  {w}^2
%\; dxdv dt 
\\
& \le 
\left[ -\frac{\nu}{10}+ \delta\left( \frac{1}{2} + \Eps \right)  \right] \int_0^{T_0}  \int_0^{T} \iint_{\T^3 \times \R^3} |\nabla(\CalM w)|^2 %\;dxdv dt dT 
\\
%& {\color{red}{I_1:}} \\
& \quad \, 
+ T_0\max \left\{ \frac{1}{\nu}, 1\right\} \left( 1 +c(m) \|g_1\|^2_{L^\infty_{t,x}(L_v^1\cap L_v^4)} + c(m) \|g_2\|^2_{L^\infty_{t,x}(L_v^1\cap L_v^4)} \right)
\int_0^{T_0} \iint_{\T^3 \times \R^3} {w}^2 %\;dxdv dt
\\
& \quad \,
+ 2 c(m) \frac{\|g_1\|^2_{L^\infty_{t,x}(L_v^1\cap L_v^4)} + \|g_2\|^2_{L^\infty_{t,x}(L_v^1\cap L_v^4)}}{\delta \Eps \nu} \int_0^{T_0} \iint_{\T^3 \times \R^3} {w}^2. %\;dxdv dt. 
\end{align*}
For $\Eps \le 1$, let us choose $\delta$ and $T_0$ small enough such that 
$$
 -\frac{\nu}{10}+ \delta\left( \frac{1}{2} + \Eps \right)  < \frac{1}{2},
$$
and 
$$
 T_0\max \left\{ \frac{1}{\nu}, 1\right\} \left( 1 + c(m)\|g_1\|^2_{L^\infty_{t,x}(L_v^1\cap L_v^4)} + c(m) \|g_2\|^2_{L^\infty_{t,x}(L_v^1\cap L_v^4)}\right) < \frac{1}{4}. 
$$
Moreover, if the constant $c_0$ in \eqref{init_cond_small} is such that, for $T_0$ small,    
\begin{align}\label{cond_small}
2 c(m) \frac{\|g_1\|^2_{L^\infty_{t,x}(L_v^1\cap L_v^4)} + \|g_2\|^2_{L^\infty_{t,x}(L_v^1\cap L_v^4)}}{\delta \Eps \nu}   < \frac{1}{4},
\end{align}
then 
$$
\int_0^{T_0} \int w^2\; \dx\dv\dT =0, 
$$
which concludes the proof.     
\end{proof}

\subsection*{Acknowledgments} RA thanks the Qatar Foundation for the support of Grant 470252-25650.  MPG is partially supported by the DMS-NSF 2019335. WS is partially supported by NSERC Discovery Grant R832717. The authors thank the Isaac Newton Institute (INI) in Cambridge, UK, for their kind hospitality. They started this collaboration during the INI program {\it Frontiers in kinetic theory: connecting microscopic to macroscopic scales}, in 2022.

%%%%%%%%%%%%%%%%%%%%%%%%%%%%%%%%%%%%%%
\bibliographystyle{amsxport}
\bibliography{kinetic}
%%%%%%%%%%%%%%%%%%%%%%%%%%%%%%%%%%%%%%

\end{document}